\newcommand{\p}{\mathbb{P}} 
\newcommand{\pp}{\mathbb{P}}
\newcommand{\N}{\mathbb{N}}
\newcommand{\Z}{\mathbb{Z}}
\newcommand{\C}{\mathbb{C}}
\newcommand{\I}{\mathcal{I}}
\newcommand{\E}{\mathcal{E}}
\newcommand{\X}{\mathcal{X}}
\newcommand{\f}{\varphi}
\newcommand{\A}{\mathbb{A}}
\newcommand{\T}{\mathbb{T}}
\newcommand{\LL}{\mathcal{L}}
\newcommand{\LD}{\mathbb{L}}
\newcommand{\M}{\mathcal{M}}
\newcommand{\oo}{\mathcal{O}}
\newcommand{\mul}{\mbox{mult}}
\newcommand{\Bs}{\mathop{\rm Bs}\nolimits}
\newcommand{\Sec}{\mathop{{\rm{\mathbb S}ec}}\nolimits}
\newcommand{\Span}[1]{\langle#1\rangle}
\newcommand{\expdim}{\mathop{\rm expdim}\nolimits}
\newcommand{\virtdim}{\mathop{\rm virtdim}\nolimits}
\newcommand{\h}{\mbox{h}}
\theoremstyle{plain}
\newtheorem{thm}{Theorem}
\newtheorem{pro}[thm]{Proposition}
\newtheorem{lem}[thm]{Lemma}   
\newtheorem{cor}[thm]{Corollary}
\newtheorem{claim}{Claim}
\theoremstyle{definition}
\newtheorem{construction}[thm]{Construction}
\newtheorem{notaz}[thm]{Notation}
\newtheorem{es}[thm]{Example}
\newtheorem{con}[thm]{Conjecture}
\newtheorem{dfn}[thm]{Definition}
\newtheorem{rmk}[thm]{Remark}
\newtheorem{remark}[thm]{Remark}
\newcommand{\mult}[0]{\operatorname{mult}}
\begin{document}

\title[Identifiability and Cremona transformations]{Identifiability of homogeneous polynomials and Cremona transformations}

\author{Francesco Galuppi}
\author{Massimiliano Mella}
\address{
Dipartimento di Matematica e Informatica\ 
Universit\`a di Ferrara\\
Via Machiavelli 35\\
44100 Ferrara Italia}
\email{glpfnc@unife.it, mll@unife.it}
\date{September 2017}
\subjclass{Primary 14J70 ; Secondary 14N05, 14E05}
\keywords{Waring, identifiable, linear system, singularities, birational maps}
\thanks{Partially supported by Progetto MIUR ``Geometria
  sulle variet\`a algebriche''}
\maketitle	
\begin{abstract}
  A homogeneous polynomial of degree $d$ in $n+1$ variables is
  identifiable if it admits a unique additive  
  decomposition in powers of linear forms. Identifiability is expected
  to be very rare. In this paper we conclude a work started more than
  a century ago and we describe all values of $d$ and $n$ for which a
  general polynomial of degree $d$ in $n+1$ variables is
  identifiable.  This is done by classifying a special class
  of Cremona transformations of projective spaces.
\end{abstract}
\section*{Introduction}
In this paper we are interested in additive decompositions of homogeneous polynomials. This is usually called Waring problem, after 1770 Waring statement on additive decomposition of integers in additive powers.
 Let $F\in \C[x_0,\ldots, x_n]_d$ be a general homogeneous polynomial of degree $d$. The additive decomposition we are looking for is

$$F=L_1^d+\ldots+L_h^d ,$$
where $L_i\in \C[x_0,\ldots, x_n]_1$ are linear forms. The problem is
classical. The first results are due to Sylvester \cite{Sy}, Hilbert
\cite{Hi}, Richmond \cite{Ri}, and Palatini \cite{Pa}, among
others. The problem addressed was to find special values of $d$ and
$n$ for which the decomposition is unique. When this happens the
decomposition yields a canonical form of a general polynomial. In
modern terminology this is called $h$-identifiability. Applications of
identifiability range from Blind Signal Separation to Phylogenetic and Algebraic statistic, see \cite{Lan} for an account of applications, and it is studied for both polynomials and tensors, see for instance \cite{COV1} \cite{AHJKS}. It is
expected that canonical forms are a  rare phenomenon, see conjectures in 
\cite{OS}.
The following is the list of known identifiable cases. A general form $f$ of degree $d$ in $n + 1$ variables is $s$-identifiable in the following cases:
\begin{itemize}
\item[-] $n = 1$, $d = 2k - 1$ and $s = k$, \cite{Sy}
\item[-] $ n = 3$, $d = 3$ and $s = 5$ Sylvester's Pentahedral Theorem,
  \cite{Sy}
\item[-] $n = 2$, $d = 5$ and $s = 7$ \cite{Hi}, \cite{Ri}, \cite{Pa},
  see also \cite{MM} for a different approach.
\end{itemize}
All these cases where known a century ago and were expected to be the only ones, \cite{Br}. Only quite recently 
a significant result was obtained in \cite{Me1} where the
author proved that these are the only cases if either $d>n$ or $n\leq
3$.

In this paper we complete the study of identifiability proving that these are the only identifiable polynomials.
\begin{thm}\label{th:1} Let $f$ be a general homogeneous form of degree $d$ in
  $n+1$ variables. Then $f$ is $s$-identifiable if and
  only if $(n,d,s)=(1,2k-1,k),(3,3,5),(2,5,7)$.
\end{thm}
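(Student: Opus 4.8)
The three listed triples are classically identifiable, so the content lies entirely in the converse. The plan is to pass in three stages from the abstract decomposition problem to a concrete birational-geometry question: first reduce to a numerically rigid situation, then reinterpret identifiability as the birationality of an explicit self-map of $\p^n$, and finally classify those self-maps.

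\textbf{Reduction to the perfect case.} Write $N=\binom{n+d}{n}-1$, let $V_{n,d}\subset\p^N$ be the $d$-uple Veronese image of $\p^n$, and let $\sigma_s(V_{n,d})$ be the $s$-th secant variety, whose points are the forms of rank $\le s$. For a \emph{general} $f\in\p^N$ to admit any decomposition into $s$ powers one needs $\sigma_s(V_{n,d})=\p^N$, hence $s\ge r_{\mathrm{gen}}(n,d)$; if $s>r_{\mathrm{gen}}(n,d)$ a general $f$ carries a positive-dimensional family of decompositions and identifiability fails, so $s=r_{\mathrm{gen}}(n,d)$. Parametrising a decomposition by the unordered set $\{P_1,\dots,P_s\}\subset V_{n,d}$ together with a point of $\Span{P_1,\dots,P_s}$ shows the abstract secant variety has dimension $s(n+1)-1$, so zero-dimensionality of the general fibre over $\p^N$ forces the \emph{perfect} equality
\[
s(n+1)=\binom{n+d}{n}.
\]
The only way to evade this is secant defectivity, and by the Alexander--Hirschowitz theorem the defective Veroneseans form a short explicit list; I would dispose of the finitely many perfect defective members (such as $(4,3,7)$ and $(4,4,14)$) by hand, observing that there a general form is not even of rank $s$. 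From now on assume the perfect case.

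\textbf{From identifiability to a Cremona transformation.} Fix general $P_1,\dots,P_{s-1}\in V_{n,d}$ with preimages $p_1,\dots,p_{s-1}\in\p^n$, and project $\p^N$ from $\Span{T_{P_1}V_{n,d},\dots,T_{P_{s-1}}V_{n,d}}$, of dimension $(s-1)(n+1)-1$. By Terracini's lemma this span is the tangent space to $\sigma_{s-1}$, and the induced rational map is given on $\p^n$ by the linear system
\[
\LL=\bigl|\,\oo_{\p^n}(d)-2p_1-\dots-2p_{s-1}\,\bigr|
\]
of degree-$d$ hypersurfaces singular at the $p_i$; the perfect equality yields $\dim\LL=n$, so $\LL$ defines $\f_{\LL}\colon\p^n\dashrightarrow\p^n$. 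The number of powers completing the fixed generic partial decomposition $P_1,\dots,P_{s-1}$ of $f$ equals $\deg\f_{\LL}$, so, within the framework of tangential projections and (weak) non-defectivity developed in \cite{Me1}, \emph{$f$ is $s$-identifiable if and only if $\f_{\LL}$ is a Cremona transformation}. The three classical triples produce exactly the three classical birational maps (an automorphism of $\p^1$, the cubo-cubic transformation of $\p^3$, and the quintic transformation of $\p^2$).

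\textbf{Classification of the Cremona maps.} It remains to prove that in every other perfect case $\deg\f_{\LL}\ge2$. The range $d>n$ or $n\le3$ is already settled in \cite{Me1}, so the genuinely new content is $d\le n$, $n\ge4$. Here the naive count $\LL^n=d^n-2^n(s-1)$ is useless, being typically negative, because the double points force the base scheme of $\LL$ to contain the lines and higher secant planes through the $p_i$, so $\deg\f_{\LL}$ must be read off an honest resolution. I would therefore analyse the singularities of $\LL$: bound the multiplicity of $\f_{\LL}$ along each component of its base locus and combine Hodge-index and log-canonical-threshold inequalities for $\LL$ with the perfect numerology to force non-birationality. \textbf{This classification is the crux of the proof}: controlling the non-reduced, positive-dimensional base loci that appear precisely for $n\ge4$ is the obstacle that the $d>n$ and $n\le3$ arguments avoid, and it is where the bulk of the technical work must go.
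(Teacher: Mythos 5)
Your first two stages are exactly the paper's own reduction (section~\ref{sec:not}): identifiability forces the perfect numerology $s(n+1)=\binom{n+d}{n}$, and by \cite[Theorem 2.1]{Me1} together with Terracini's lemma it forces the tangential-projection map given by $\LL_{n,d}(2^{s-1})$ to be birational, so one must classify when this map is a Cremona transformation (Theorem~\ref{thm:Cremona-identifiability}). The gap is your third stage, which you yourself flag as the crux: for $n\ge 4$, $4\le d\le n$ you offer no proof, only the intention to bound $\deg\f_{\LL}$ directly by controlling multiplicities along the base locus and invoking Hodge-index and log-canonical-threshold inequalities. That is precisely the strategy the paper discards as unworkable: it is the approach of \cite{Me2}, which succeeded only in special cases under a divisibility condition on the degree, because for $d\le n$ the base locus of $\LL_{n,d}(2^{s-1})$ is positive-dimensional and non-reduced, and no one has produced the inequalities you invoke. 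Declaring that the ``bulk of the technical work must go'' there is not a proof of the theorem; it is a restatement of the open problem the paper solves.

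The paper's actual argument is entirely different and none of it appears in your proposal. It specializes the double points, letting $n+1$ of them collapse to a point of multiplicity $3$ carrying $\binom{n+1}{2}$ tangent directions (Proposition~\ref{collision1}); it then runs an induction on the dimension alone, via Castelnuovo exact sequences and a delicate choice of integers $h(i,d,n)$, $s(i,d,n)$ (Propositions~\ref{inductionstep} and~\ref{pro:induction}), to show the relevant limit linear systems are nonspecial and restrict well to a fixed $\p^3$; finally it bounds the sectional genus of the limit system from below (Proposition~\ref{pro:genus_bound}, Lemma~\ref{lem:degwithTisok}). Positive sectional genus kills birationality, since a general curve section of a Cremona linear system is the inverse image of a general line and hence rational; this replaces any direct degree bound. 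Your proposal is also silent on the cases needing separate treatment even after the main induction: $d=3$ (where every system of cubics with a triple point and a double point is special, so the collapsing trick fails and the paper argues instead via specialization to codimension-$3$ subspaces and explicit quadric cones, Lemmas~\ref{ssec:3n=6} and~\ref{ssec:3n=7}), $d=2$ (fiber-type maps), and $(n,d)=(4,4)$ (a pencil of quadrics through the $13$ points forces non-birationality). Without these, and above all without a working substitute for the degeneration-plus-genus argument, the converse implication of Theorem~\ref{th:1} remains unproved.
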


Despite its algebraic statement we approach the problem from a
birational geometry point of view. The starting point is
\cite[Theorem 2.1]{Me1} where it is proved that identifiability forces a
particular tangential projection of the Veronese variety to be birational. This
projection is associated to linear systems with imposed singularities. 
Then our main result is a consequence of the following statement about
Cremona modifications of $\p^n$, which is of interest in itself.
\begin{thm}
  \label{thm:Cremona-identifiability} Let $\LL_{n,d}(2^h)\subset|\oo_{\p^n}(d)|$ be the
  linear system of forms of degree $d$ with $h$ double points in general position and $\f_{n,d,h}$
  the rational map associated to it. Then $\f_{n,d,h}$ is a Cremona
  transformation, i.e. $\dim\LL_{n,d}(2^h)=n$ and $\f_{n,d,h}:\p^n\dasharrow\p^n$ is birational, if and only if
  \begin{itemize}
  \item[-]  $n=1$, $d=2k+1$,  and $h=k$,
\item[-] $n=2$, $d=5$ and $h=6$,
\item[-] $n=3$, $d=3$ and $h=4$. 
  \end{itemize}
\end{thm}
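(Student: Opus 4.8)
The plan is to split the statement into a dimension count and a birationality (degree) count, and to lean on \cite{Me1} to confine the genuinely new work to a single range of cases. First I would determine for which $(n,d,h)$ one even has $\dim\LL_{n,d}(2^h)=n$. A general double point imposes $n+1$ conditions on $|\oo_{\p^n}(d)|$, so $\expdim\LL_{n,d}(2^h)=\binom{n+d}{d}-1-h(n+1)$, and by the Alexander--Hirschowitz theorem the actual dimension equals the expected one outside an explicit list (the quadrics $d=2$, $2\le h\le n$, together with four sporadic triples). Imposing $\expdim=n$ forces the Diophantine condition $h+1=\binom{n+d}{d}/(n+1)$; equivalently $h+1$ is the generic Waring rank and the $(h{+}1)$-secant variety of the $d$-th Veronese fills $\p^N$, $N=\binom{n+d}{d}-1$. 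The four sporadic Alexander--Hirschowitz triples are checked by hand to give actual dimension $0\ne n$, so none is Cremona.

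The case $d=2$ I would dispose of separately and cleanly. A quadric singular at $h$ general points $p_1,\dots,p_h$ is exactly one whose kernel contains $U:=\Span{p_1,\dots,p_h}$, hence one pulled back from a quadric on $\p^{n-h}$ along the projection $\pi_U$ from $U$; thus $\varphi_{n,2,h}$ factors as $v_2\circ\pi_U$, where $v_2$ is the quadratic Veronese embedding of $\p^{n-h}$. Its image is a Veronese variety of dimension $n-h<n$, so for $h\ge1$ the map is never even dominant, let alone Cremona. For $d\ge3$ I then invoke \cite{Me1}: the classification is already settled when $d>n$ or $n\le3$, and all three listed triples lie in that range, since $(1,2k+1,k)$ and $(2,5,6)$ satisfy $d>n$ while $(3,3,4)$ has $n=3$. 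For the ``if'' direction I would verify birationality by computing the degree of $\varphi_{n,d,h}$, i.e. the top self-intersection of the moving part on a resolution of $\Bs\LL_{n,d}(2^h)$: for $n=1$ the moving part has degree $(2k+1)-2k=1$; for $(2,5,6)$ the base locus is just the six double points and the degree is $5^2-2^2\cdot 6=1$; for $(3,3,4)$ the base locus also contains the six lines $\Span{p_i,p_j}$ (a cubic doubly singular at two points of a line must contain the line), and after blowing up the four points and the six lines the moving system becomes base-point-free with self-intersection $1$.

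It then remains to show that for $n\ge4$ and $3\le d\le n$ no triple is Cremona, and this is the heart of the argument. Here I would compute $\deg\varphi_{n,d,h}$ by resolving the full base locus, whose new feature in the range $d\le n$ is the presence of positive-dimensional components: a span $\Span{p_{i_1},\dots,p_{i_k}}$ lies in $\Bs\LL_{n,d}(2^h)$ precisely when $\LL_{k-1,d}(2^k)$ is empty, which for $d\le n$ already occurs for small $k$ (for instance all $\binom{h}{2}$ lines when $d=3$). The plan is to blow up these spans in order of increasing dimension, track the multiplicity of each exceptional divisor in the moving system, and show that the resulting top self-intersection exceeds $1$, so that $\varphi_{n,d,h}$ is generically finite of degree $>1$ and hence not birational. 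I expect this intersection-theoretic bookkeeping---identifying exactly which spans are forced, with which multiplicities, and controlling the iterated blow-up---to be the principal obstacle, and I would organize it by a double induction on $n$ and $d$, using Alexander--Hirschowitz at each stage to keep dimensions under control. A representative case to be excluded is $(n,d,h)=(4,3,6)$, where $\expdim=4$, the generic rank is $h+1=7$, and the fifteen lines $\Span{p_i,p_j}$ must be resolved before the degree can be read off; its treatment should model the general inductive step.
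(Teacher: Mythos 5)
Your reductions are fine and coincide with the paper's: Alexander--Hirschowitz forces $h=k(n,d)-1=\binom{n+d}{d}/(n+1)-1$ (with the four sporadic triples discarded by hand), the $d=2$ systems are cones and never dominate, \cite{Me1} (with \cite{AC}) settles $d>n$ and $n\le 3$, and the three listed triples are classically homaloidal. But the heart of the theorem --- excluding every triple with $n\ge 4$ and $3\le d\le n$ --- is exactly where your proposal stops being a proof and becomes a plan, and the plan you outline is the one that is known not to work in this generality. Computing $\deg\f_{n,d,h}$ by resolving the base locus requires a scheme-theoretic description of $\Bs\LL_{n,d}(2^h)$ (all components, not just spans of subsets of the points, together with their multiplicities and infinitely near contributions) for every $d\le n$; this is precisely the obstruction that confined \cite{Me2} to special cases with a divisibility hypothesis on the degree, and the present paper exists because it found a way to avoid that computation altogether. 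The paper instead degenerates the scheme of double points --- letting $n+1$ of them collapse to a triple point with $\binom{n+1}{2}$ tangent directions (Proposition~\ref{collision1}) and pushing the rest onto a flag of linear subspaces --- and then proves, by induction on dimension only, that the limit linear system has \emph{positive sectional genus} (Lemma~\ref{lem:degwithTisok}, building on Propositions~\ref{pro:induction} and~\ref{pro:genus_bound}). Since the curve sections of a Cremona system are strict transforms of lines, hence rational, and genus can only go up in the limit (Remark~\ref{rem:positive_sectional_genus}), positive sectional genus kills birationality. Nothing in your proposal substitutes for this idea.

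There is also a concrete error in the test case you chose to model the inductive step. For $(n,d,h)=(4,3,6)$ the map is not generically finite of degree $>1$: it is of \emph{fiber type}, contracting the one-parameter family of rational normal quartics through the six points (through a general point of $\p^4$ there is such a curve, and the system restricts trivially to it). So the top self-intersection of the moving part on any resolution is $0$, not a number exceeding $1$, and the relevant geometry is not captured by the fifteen lines at all. Similarly, $(4,4,13)$ fails for yet another reason (the pencil of quadrics through the thirteen points produces a subsystem of reducible quartics with two-dimensional base locus), which is why the paper isolates it as a special case. These examples show that the dichotomy you propose to verify case by case (``degree $>1$'') is not even the correct statement, and that the bookkeeping you defer would have to detect qualitatively different failure modes; as written, the proposal leaves the genuinely new content of the theorem unproved.
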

The main difficulty in proving
Theorem~\ref{thm:Cremona-identifiability} is to control the
singularities and the base locus of the linear system
$\LL_{n,d}(2^h)$. The first task is accomplished in  \cite[Corollary
4.5]{Me1}.  It is proved that, for $d\geq 4$,  the singularities of
$\LL_{n,d}(2^h)$ are only the double points imposed, the degree $3$
case has been recently completed by \cite{COV2}. This allowed to
conclude in the mentioned range. Unfortunately if $d\leq n$ it is
necessary to control not only the singularities but also the base
locus of these linear systems to bound the degree of the map. In
\cite{Me2} some special cases were proved assuming a divisibility
condition on the degree. Here we approach the problem from a
different perspective. Instead of trying to bound directly the degree
of the map associated to $\LL_{n,d}(2^h)$ we produce a degeneration
of the imposed singularities in such a way that the limit linear
system admits a hyperplane on which the 
restricted map is still expected to be non birational and then proceed
by induction trying to bound the sectional genus of the linear systems we are considering.  

This reminds the techniques of interpolation. Indeed the linear systems we are interested in have been studied for the
interpolation problem and we profit both of 
Alexander--Hirschowitz' paper \cite{AH} and of more recent approaches due to Postinghel \cite{Pos} and
Brambilla--Ottaviani \cite{BO}. The proof of
Theorem~\ref{thm:Cremona-identifiability} is done by induction on $n$.
The induction step is done via a careful choice of numbers. This
numerology is the core of the (differentiable) Horace method in
\cite{AH}, where it is played a double induction on both degree and
dimension. We were not able to control the sectional genus along these specializations. For this reason we have to develop a  different
approach based only on dimension induction. We let some double points
collapse into a 3-ple point with tangent directions. The latter
allows us to make induction work and to
study the restriction of the linear system to a hyperplane. 

This
leads us to study the standard interpolation 
problems for linear systems with one triple point, with tangent
directions in general position, and a bunch of double points. 
The first step of induction is the study of planar linear
systems. Here we benefit from the theory developed around
Harbourne--Hirschowitz conjecture, see \cite{Cil}, in 
particular we use the results in \cite{CM} about quasi-homogeneous multiplicity.
As usual in interpolation problems, for low degrees $d\leq 5$ and in
particular for cubics, we need special arguments. Once we worked out the case with tangent direction in general position we extend it to the set of tangent direction arising in the flat limit of $(n+1)$ double points that collapse to a point of multiplicity 3 concluding the proof.

Here is an outline of the paper. 
In section~\ref{sec:not} we introduce the notation we use and describe the reduction from Theorem~\ref{th:1} to Theorem~\ref{thm:Cremona-identifiability}. In section~\ref{sec:limits} we prove the result we need about degeneration with and without collapsing points. In particular we compute the limits of some special
configurations of double points. We were not able to find the theorems
we needed stated explicitly, therefore we proved everything but this section is
inspired by Nesci's and Postinghel's theses, \cite{Ne} \cite{Posth}.
In section~\ref{sec:induction} we prove the main induction
argument. This is done in several steps to help, at least this is our
hope, the reader to digest all the numerology needed. 
In section~\ref{sec:plane} we bound the sectional genus of the linear
system we are interested in.
In section~\ref{sec:special} we apply the result obtained to the
flat limit of $(n+1)$ collapsing double points, settle the special
case of cubics and prove Theorem~\ref{thm:Cremona-identifiability}.

{\sl Acknowledgments} We are grateful to the referees for a careful
reading, for suggestions that improves the expositions,  and for pointing out wrong computations in the first
version of the paper.

\section{Notations and preliminaries} \label{sec:not}
We work over the complex field.
	\begin{dfn} Let $Z$ be a $0$-dimensional scheme. The {degree}, or {length}, of $Z$, denoted by $\deg Z$, is the dimension of its ring of regular functions as a complex vector space.
	\end{dfn}
We start recalling some useful facts about 0-dimensional schemes.
	\begin{pro}\label{stessogrado} \label{basic} Let $Z$ be a $0$-dimensional subscheme. 
          \begin{itemize}
          \item[i)]The degree of $Z$ is the value of the Hilbert
            polynomial of $Z$.
\item[ii)]Let $X$, $Y$ be $0$-dimensional schemes such that $X\subseteq Y$ and $\deg X = \deg Y$. Then $X = Y$.
          \end{itemize}
\end{pro}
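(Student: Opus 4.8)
The plan is to handle both statements by reducing to elementary facts about finite-dimensional $\C$-algebras, exploiting that a $0$-dimensional scheme $Z$ is finite over $\C$ and hence affine: we may write $Z=\Sp A$ with $A=H^0(Z,\oo_Z)$ a finite-dimensional $\C$-vector space whose dimension is, by the definition just given, $\deg Z$.

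For part (i), I would compute the Hilbert polynomial through the Euler characteristic. Regarding $Z\subset\p^n$, set $P_Z(m)=\chi(\oo_Z(m))$; this polynomial agrees with the Hilbert function for $m\gg 0$, so it is the Hilbert polynomial of $Z$. Since $Z$ is supported on finitely many points, $H^i(Z,\oo_Z(m))=0$ for every $i>0$ and every $m$, and therefore $P_Z(m)=h^0(\oo_Z(m))$. The remaining point is that twisting does not change this number: any invertible sheaf on $Z$ is trivial, because $Z$ is a finite disjoint union of spectra of local Artinian rings and invertible sheaves over a local ring are free. Thus $\oo_Z(m)\cong\oo_Z$ for all $m$, giving $P_Z(m)=\dim_\C H^0(Z,\oo_Z)=\deg Z$, a constant whose value is exactly $\deg Z$.

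For part (ii), the closed immersion $X\subseteq Y$ induces a surjection of finite $\C$-algebras $H^0(\oo_Y)\twoheadrightarrow H^0(\oo_X)$, with some kernel $J$. By additivity of dimension over $\C$ we get $\dim_\C J=\deg Y-\deg X=0$, so $J=0$ and the surjection is an isomorphism. Equivalently the ideal of $X$ inside $Y$ is zero, whence $X=Y$.

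Both arguments are short, and the only genuinely substantive ingredient is the higher cohomology vanishing together with the triviality of line bundles used in (i); these are precisely what force the Hilbert polynomial of a $0$-dimensional scheme to be constant and equal to its degree. The rest is linear algebra over $\C$, so I do not expect a real obstacle beyond being careful that the schemes involved are affine and finite.
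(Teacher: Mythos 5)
Your proof is correct, and there is nothing in the paper to compare it against: Proposition~\ref{basic} is stated there as a recollection of ``useful facts about $0$-dimensional schemes'' with no proof supplied, so your argument fills in exactly what the authors take as standard. Both halves are handled the right way. In (i) the two ingredients you isolate are indeed the whole content: Grothendieck vanishing (cohomology vanishes above the dimension of the support) gives $\chi(\oo_Z(m))=h^0(\oo_Z(m))$, and triviality of invertible sheaves on a disjoint union of spectra of Artinian local rings gives $h^0(\oo_Z(m))=h^0(\oo_Z)=\deg Z$, so the Hilbert polynomial is the constant $\deg Z$. In (ii) the only point requiring care is the one you flag: the surjectivity of $H^0(\oo_Y)\to H^0(\oo_X)$ is automatic here because $Y$ is affine (equivalently, $H^1$ of the ideal sheaf of $X$ in $Y$ vanishes), after which the dimension count over $\C$ forces the kernel ideal to be zero and hence $X=Y$. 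No gaps.
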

We deal with linear systems on $\p^n$ with assigned singularities and tangent directions.
	\begin{notaz} Let $\{p_1,\ldots, p_r\}\subset\p^n$ be a set of points and $\{q_{1},\ldots,q_{j}\}\in \p(\T_{p_1}\p^n)$ a set of tangent directions (infinitely near points) in $p_1$.
The linear system 
$$\LL_{n,d}(m_1,\dots,m_r)(p_1[\{q_{1},\ldots,q_{j}\}],p_2,\ldots,p_r)\subset|\oo_{\p^n}(d)|$$ 
is the sublinear system of hypersurfaces having multiplicities $m_i$ at the point $p_i$ and whose tangent cone at $p_1$ contains $\{q_1,\ldots, q_j\}$.
If either the points $\{p_1,\ldots, p_r\}$ and  $\{q_{1},\ldots,q_{j}\}$ are in general position or no confusion is likely to arise, we indicate
$$\LL_{n,d}(m_1[j],m_2\dots,m_r):=\LL_{n,d}(m_1,\dots,m_r)(p_1[\{q_{1},\ldots,q_{j}\}],p_2,\ldots,p_r)$$
and
$$\LL_{n,d}(m_1,\dots,m_r):=\LL_{n,d}(m_1[0],m_2\dots,m_r).$$
Moreover if $m_1=\ldots=m_g=m$ we indicate
$$\LL_{n,d}(m^g,m_{g+1},\dots,m_r):=\LL_{n,d}(m_1,\dots,m_r).$$
	\end{notaz}
	
	\begin{dfn}
The {virtual dimension} of such a linear system is 

	$$\virtdim\LL_{n,d}(m_1[j],m_2\dots,m_r)=\binom{d+n}{n}-1-\sum_{i=1}^r\binom{m_i-1+n}{n}-j,$$

the {expected dimension}  is defined
as			$$\expdim\LL_{n,d}(m_1[j],\dots,m_r)=\max
\left\{\virtdim\LL_{n,d}(m_1[j],m_2\dots,m_r),-1\right\},$$
where dimension $-1$ indicates that the linear system is expected to be empty.
Note that 
$$\dim\LL_{n,d}(m_1[j],m_2\dots,m_r)\ge\expdim
\LL_{n,d}(m_1[j],m_2\dots,m_r).$$
If $\dim\LL_{n,d}(m_1[j],m_2\dots,m_r)>\dim\LL_{n,d}(m_1[j],m_2\dots,m_r)$, then the linear system is said to be {special}. Otherwise it is called {nonspecial}.
\end{dfn}

The speciality of linear systems has been extensively studied, see \cite{Cil} for an account, but very little is known in general. The better understood  cases are linear systems with only double points and linear systems of plane curves.

If all $m_i=2$ there is the famous Alexander--Hirschowitz theorem.
\begin{thm}\cite{AH} \label{thm:AH}$\LL_{n,d}(2^h)$ is special if and only if $(n,d,h)$ is one of the following:
  \begin{itemize}
  \item[i)] $(n,2,h)$ with $2\leq h\leq n$,
\item[ii)] $(2,4,5)$, $(3,4,9)$, $(4,3,7)$, $(4,4,14)$.
  \end{itemize}
\end{thm}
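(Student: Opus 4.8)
The plan is to translate the statement into a cohomological vanishing and attack it by degeneration to a hyperplane. Imposing a double point at $p$ means vanishing to order two, killing the value and all $n$ first derivatives, so the scheme $Z=2p_1+\cdots+2p_h$ defined by $\prod_i\I_{p_i}^2$ has length $h(n+1)$ and $\dim\LL_{n,d}(2^h)=h^0(\p^n,\I_Z(d))-1$. Non-speciality is equivalent to the vanishing of $h^1(\p^n,\I_Z(d))$ when $\virtdim\ge 0$ (and to $h^0=0$ otherwise); by Terracini's lemma this is the same as non-defectivity of the $h$-th secant variety of the Veronese $v_d(\p^n)$. So it suffices to show, for general $p_i$, that $h^0$ and $h^1$ of $\I_Z(d)$ are never simultaneously positive, outside the exceptional list.

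I would run a simultaneous induction on $n$ and $d$ using a general hyperplane $H\cong\p^{n-1}$ and the Castelnuovo restriction sequence
$$0\to\I_{\operatorname{Res}_H Z}(d-1)\to\I_Z(d)\to\I_{Z\cap H,H}(d)\to 0,$$
which gives $h^0(\I_Z(d))\le h^0(\I_{\operatorname{Res}_H Z}(d-1))+h^0(\I_{Z\cap H}(d))$ and a dual control on $h^1$. For a double point supported on $H$ the trace $Z\cap H$ is a double point of $H$ (length $n$) and the residual $\operatorname{Res}_H Z$ is a reduced point (length $1$), while a point kept off $H$ passes entirely into the residual. The difficulty is numerical: distributing the double points on or off $H$ in this naive way almost never matches the two expected dimensions — on $H$ in degree $d$, and on $\p^n$ in degree $d-1$ — simultaneously.

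The remedy is the differential Horace method of Alexander--Hirschowitz. Rather than letting a double point sit on $H$ with the rigid split above, one takes a flat limit of double points collapsing transversally into $H$ and redistributes the length-$(n+1)$ scheme between trace and residual more favourably, so that the trace carries only a reduced point and the residual carries the remaining length-$n$ ``vertical'' scheme. Partitioning the $h$ double points into groups — some general off $H$, some specialised flatly onto $H$, some degenerated differentially — one solves for the group sizes so that both the trace system on $\p^{n-1}$ in degree $d$ and the residual system on $\p^n$ in degree $d-1$ have non-negative expected dimension and are non-special by the inductive hypothesis, forcing $h^1(\I_Z(d))=0$. Since specialisation is only semicontinuous, one must exhibit a genuine configuration meeting the Castelnuovo bound with equality; this realisation step is the technical heart.

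The hard part is twofold. First, the differential Horace lemma itself — computing the flat limit of a union of collapsing double points and pinning down its trace and residual exactly — demands a careful local and flat-limit analysis, precisely the kind of computation revisited in Section~\ref{sec:limits}. Second is the numerology: the divisibility constraints governing the group sizes fail to close the induction for only finitely many pairs $(n,d)$, and those residual cases — the quadrics giving the whole family $(n,2,h)$ with $2\le h\le n$, together with $(2,4,5),(3,4,9),(4,3,7),(4,4,14)$ — must be settled by hand, either by explicit equations or by identifying the geometric source of the defect, such as the $5$ general double points on a plane quartic forcing a double conic. Checking that these are the only failures, and that the recursion truly closes everywhere else, absorbs essentially all of the effort.
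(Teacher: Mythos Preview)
The paper does not prove this statement at all: Theorem~\ref{thm:AH} is quoted from Alexander--Hirschowitz \cite{AH} and used as a black box throughout (see also the remark that \cite{BO} and \cite{Pos} give streamlined proofs). So there is no ``paper's own proof'' to compare your proposal against.

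That said, your outline is an accurate high-level summary of the original Alexander--Hirschowitz strategy: reformulate speciality as simultaneous nonvanishing of $h^0$ and $h^1$ of $\I_Z(d)$, degenerate via the Castelnuovo sequence onto a hyperplane, and use the differential Horace trick to redistribute the trace/residual lengths so that the numerics close and induction runs. The caveats are the usual ones for this theorem: what you have written is a plan rather than a proof, and the two places you flag as ``the hard part'' --- making the differential specialisation rigorous and verifying that the numerology actually closes outside the listed exceptions --- are precisely where all the work lies. In particular the induction is a double induction on $(n,d)$ with a nontrivial base and several sporadic failures that must be handled separately; the original paper \cite{AH} and the later accounts \cite{BO}, \cite{Pos} each spend many pages on exactly this bookkeeping. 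Your sketch is correct in spirit but would need substantial expansion to stand as a proof.
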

\begin{rmk}\label{rmk:AH-1}
  Further note that for all special linear systems in Theorem~\ref{thm:AH}  ii) the virtual dimension is negative while the dimension is $0$.
\end{rmk}

For linear systems of plane curves there is a very precise conjecture
about speciality. Since we are going to use some of its known cases in
section~\ref{sec:plane} we collect here the necessary notation and statement.

Let $\{p_1,\ldots,p_r\}\subset\p^2$ be general points,
$\nu:\tilde{\p}^2\to\p^2$ their blow up, and
$\LL:=\LL_{2,d}(m_1,\ldots,m_r)(p_1,\ldots,p_r)$. We put a tilde
to indicate the strict transform of curves on $\tilde{\p}^2$. 
\begin{dfn} A $(-1)$-curve $C\subset\p^2$ is a curve such that
  $\tilde{C}$ is a smooth rational curve with self intersection
  $-1$.

  \label{dfn:(-1)_special} The linear system $\LL$ on $\p^2$ is $(-1)$-reducible if 
$$\LL=\sum_{i=1}^k N_iC_i+{\M},$$
where $C =\sum_{i=1}^k C_i$ is a configuration of $(-1)$ curves, $\tilde{\M}\cdot \tilde{C}_i= 0$, for all $i = 1,\ldots,k$, and
$\virtdim({\M})\geq 0$.

The system $\LL$ is called $(-1)$-special if, in addition, there is an $i\in\{1,\ldots,k\}$
such that $N_i > 1$.
\end{dfn}
The leading conjecture for linear systems of plane curves has been formulated by Harbourne and Hirschowitz in 1989.
\begin{con}[Harbourne--Hirschowitz] A linear system of plane curves
  is special if and only if it
is $(-1)$-special.
  \label{con:HH}
\end{con}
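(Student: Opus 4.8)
The plan is to treat the two implications separately: ``$(-1)$-special $\Rightarrow$ special'' is a direct intersection computation, while the converse ``special $\Rightarrow$ $(-1)$-special'' is the substantial content, which I would attack by Cremona reduction to a standard class followed by a degeneration argument.

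For the easy direction, suppose $\LL=\sum_{i=1}^k N_iC_i+\M$ is $(-1)$-reducible. On the blow-up $\nu\colon\tilde{\p}^2\to\p^2$ write $\tilde\LL=\tilde\M+\sum_i N_i\tilde C_i$, where the $\tilde C_i$ are disjoint $(-1)$-curves satisfying $\tilde C_i^2=\tilde C_i\cdot K=-1$ and $\tilde\M\cdot\tilde C_i=0$. Computing virtual dimensions via Riemann--Roch and expanding $\tilde\LL\cdot(\tilde\LL-K)-\tilde\M\cdot(\tilde\M-K)$ using these relations yields
$$\virtdim\M-\virtdim\LL=\sum_{i=1}^k\binom{N_i}{2}.$$
Since $\sum_i N_iC_i$ is a fixed part, removing it gives a bijection between members, so $\dim\LL=\dim\M\ge\virtdim\M$, whence $\dim\LL\ge\virtdim\LL+\sum_i\binom{N_i}{2}$. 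This strictly exceeds $\virtdim\LL$ as soon as some $N_i>1$, so every $(-1)$-special system is special.

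For the hard direction I would reduce to standard classes. Ordering $m_1\ge\cdots\ge m_r\ge0$, call $\LL_{2,d}(m_1,\ldots,m_r)$ \emph{standard} if $d\ge m_1+m_2+m_3$. The quadratic Cremona transformation based at $p_1,p_2,p_3$ sends the class to $\LL_{2,\,2d-m_1-m_2-m_3}(d-m_2-m_3,\,d-m_1-m_3,\,d-m_1-m_2,\,m_4,\ldots,m_r)$ and preserves both $\dim$ and $\virtdim$, hence speciality and $(-1)$-speciality as well. When the class is not standard the degree strictly drops under this reduction while staying nonnegative, so the process terminates at an empty or a standard class; it therefore suffices to show that a \emph{standard} special class is $(-1)$-special. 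For $r\le9$ general points $\tilde{\p}^2$ is a weak del Pezzo surface, its effective cone is finitely generated by $(-1)$-curves, and non-speciality of the standard classes follows by a finite intersection check, recovering the conjecture in this range.

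The genuine obstacle is $r\ge10$. Here the standard classes include the uniform systems $\LL_{2,d}(m^r)$, whose non-speciality is essentially equivalent to Nagata's conjecture and is open. To make progress I would invoke the Ciliberto--Miranda degeneration (see \cite{Cil}): degenerate $\p^2$ to $\p^2\cup_R\F_1$, distribute the fat points between the two components, and bound $\dim\LL$ from above by the dimension of the flat limit, which splits into systems on each component matched along the double curve $R$. This produces a double induction on $(d,r)$ whose inductive step demands that the restriction sequences along $R$ have the expected rank. Certifying this transversality, i.e.\ ruling out an unexpected $h^1$ supported on $R$ in the limit, is exactly the point where the argument stalls and where the still-unproven instances of the conjecture live; a complete proof would require either a sharp Nagata-type bound for the uniform systems or a refinement of the degeneration guaranteeing that the gluing is always non-special.
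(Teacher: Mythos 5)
You have set out to prove Conjecture~\ref{con:HH}, but this statement is precisely that: a \emph{conjecture}, open since 1989, which the paper states without proof and uses only through its known cases --- specifically the Ciliberto--Miranda result \cite{CM} for quasi-homogeneous systems $\LL_{2,d}(n,m^h)$ with $m\le 3$, invoked in section~\ref{sec:plane}. There is therefore no proof in the paper to compare yours against, and no complete proof exists anywhere: a full argument would resolve a major open problem (the forward implication for uniform systems $\LL_{2,d}(m^r)$, $r\ge 10$, is intimately tied to Nagata's conjecture, as you yourself note). So the proposal cannot be assessed as a correct proof; it has a genuine gap, and to your credit you identify it explicitly rather than papering over it.

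On the parts you do carry out: the easy implication is essentially right, though note that Definition~\ref{dfn:(-1)_special} only requires the $C_i$ to form a \emph{configuration} of $(-1)$-curves, not a disjoint union, so your identity $\virtdim\M-\virtdim\LL=\sum_i\binom{N_i}{2}$ acquires cross terms $-\sum_{i<j}N_iN_j\,\tilde C_i\cdot\tilde C_j$ unless you first reduce to the case $\tilde C_i\cdot\tilde C_j=0$; you should either build that reduction in or restrict the computation accordingly. You also need $\tilde\LL\cdot\tilde C_i=-N_i<0$, applied iteratively, to justify that $\sum N_iC_i$ is genuinely the fixed part before concluding $\dim\LL=\dim\M$. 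The Cremona reduction to standard classes and the finite check on weak del Pezzo surfaces for $r\le 9$ are correctly recalled from the literature, and the Ciliberto--Miranda degeneration of $\p^2$ into $\p^2\cup_R\F_1$ is indeed the state-of-the-art tool --- but, as your final paragraph concedes, certifying the expected rank of the restriction along $R$ in the inductive step is exactly where all known attempts stall. The missing idea is not a technical lemma you overlooked; it is the open heart of the conjecture, so the proposal should be read as a correct survey of the known partial results plus a proof of the trivial direction, not as a proof of the statement.
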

The conjecture is known to be true in some cases and in particular Ciliberto--Miranda, \cite{CM}, proved it for 
$\LL_{2,d}(n,m^h)$ for $m\leq 3$.

The sectional genus of a linear system is the geometric genus of a general curve section. We will prove that the linear systems we are interested in have positive sectional genus. For this purpose the following remark is extremely useful.
 \begin{remark}\label{rem:positive_sectional_genus} The genus of any curve of an algebraic system of algebraic curves is not greater
 than the genus of the generic curve of the system, \cite{Ch}.   Therefore to prove that a linear system $\LL$ has positive sectional genus it is enough to show a curve of positive genus in some algebraic family of
 curves whose general member is a curve section of $\LL$.
 \end{remark}

We now reduce Theorem~\ref{th:1} to
Theorem~\ref{thm:Cremona-identifiability} following \cite{Me1}.  
Let $n,d$ be integers. Then a general polynomial
$F\in\C[x_0,\ldots,x_n]_d$ admits a unique decomposition 
$$F=L_1^d+\ldots+L^d_s, $$
with $L_i\in\C[x_0,\ldots,x_n]_1$, if and only if the $s$-secant map
$\pi_s:\sec_s(V_{d,n})\to\p^N$ of the Veronese variety is dominant and
birational, where $N={n+d\choose n}-1$ and $\sec_s(V_{d,n})$ is the abstract $s$-secant variety.
Since $\dim\sec_s(V_{d,n})=s(n+1)-1$, for $\pi_s$ to be birational
there is a numerical constrain. That is 
\begin{equation}
  \label{eq:definition_k(n,d)}
  k(n,d):=\frac{{d+n\choose n}}{n+1}
\end{equation}
has to be an integer. Now let $\Sec_s(V_{n,d})$ be the embedded $s$-secant variety.
For a general point $z\in\Sec_{k(n,d)-1}(V_{n,d})$, let $\varphi:\p^N\dasharrow\p^n$ be the projection from the embedded
tangent space $\mathbb{T}_z\Sec_{k(n,d)-1}(V_{n,d})$.
In \cite[Theorem 2.1]{Me1} it is proved that if $\pi_{k(n,d)}$ is
birational then $\varphi_{|V_{n,d}}$ is birational.
It is well known, see for instance \cite{Me1}, that, 
by Terracini Lemma, this map is associated to the linear system 
$\LL_{n,d}(2^{k(n,d)-1})$.
Therefore the morphism $\pi_{k(n,d)}$ is birational only if  the map associated to $\LL_{n,d}(2^{k(n,d)-1})$ is
birational. Using this, Theorem~\ref{th:1} is a consequence of Theorem~\ref{thm:Cremona-identifiability}.

\section{Limit of double points}\label{sec:limits}

A standard approach to study  the speciality of linear systems $\LL_{n,d}(m_1,\dots,m_r)$ is via degeneration. This is accomplished by using a flat family in which the involved points specialize in some special configuration. Often some of the points are sent  on a hyperplane to apply induction arguments. In our construction we need several degenerations and, unlike the usual set up that concerns degeneration of points in general positions to special position, we need to further degenerate also special positions. 
Thanks to ii) in Proposition~\ref{basic} we may study specializations in a local setup.
We set some notation that will be used throughout the paper.


\begin{dfn} A degeneration is a morphism $\pi: V\to\Delta$, where $\Delta\ni 0,1$ is a complex disk, $V$ is a smooth variety and $\pi$ is proper and flat. For any $t\in\Delta$ we denote the fiber of $\pi$ over $t$ by $V_t$. Let $\sigma_i:\Delta\to V$ be sections of $\pi$  and  $Z$ a scheme with $Z_{\rm red}=\cup_i\sigma_i(\Delta)$.  We let  $Z_t:=Z\cap V_t$, for $t\neq 0$, and $Z_0$ their flat limit.
We say that $Z_0$ is a specialization of $Z_t$.
\end{dfn}

 In this paper we are mainly interested in the following case of specialization.

\begin{construction}[Specialization without collisions]\label{cons:notationlinearsystem}
Let $X$ be the blow up of $\p^n$ in the point $p_1$, with exceptional
divisor $E$, $V:=X\times\Delta$, and $\pi:V\to\Delta$ the canonical projection. Fix $j$ disjoint sections $\{\tau_1,\ldots\tau_j\}$ such that $\tau_i(\Delta)\subset E\times\Delta$ and $r-1$ disjoint sections $\{\sigma_2,\ldots,\sigma_{r}\}$ such that $\sigma_i(\Delta)\cap (E\times\Delta)= \emptyset$. 
Let
\[Z:=\bigcup_{i=2}^r \sigma_i(\Delta)^{m_i}\cup\bigcup_{h=1}^j\tau_h(\Delta)\]
be the scheme supported on the sections with multiplicity $m_i$ along $\sigma_i(\Delta)$.
Let 
$$\LD_{n,d}(m_1[j],m_2\dots,m_r)(p_1[\{\tau_1,\ldots,\tau_j\}],\sigma_2,\ldots,\sigma_r)$$
be the linear subsystem on $V$ associated to divisors of degree $d$ having  multiplicities $m_i$
along $\sigma_i(\Delta)$, $m_1$ in $p_1$ and whose tangent cone contains $\tau_j(\Delta)$. 
Then for any $t\in \Delta$ the linear system
$$\LD_{n,d}(m_1[j],m_2,\dots,m_r)(p_1[\{\tau_1,\ldots,\tau_j\}],\sigma_2,\ldots,\sigma_r)_{|V_t}$$
is 
$$\LL_t:=\LL_{n,d}(m_1[j],m_2,\dots,m_r)(p_1[\{\tau_1(t),\ldots,\tau_j(t)\}],\sigma_2(t),\ldots,\sigma_r(t)).$$
\label{rem:specialize_to_get_non_speciality}
By semicontinuity we have
$$\h^0(V_0,\LL_0)\geq \h^0(V_t,\LL_t).$$
Therefore to prove the nonspeciality of
$\LL_t$ it is 
enough to produce a specialization having
$\LL_0$ nonspecial.
\end{construction}
\begin{dfn}
  \label{def:special_over0} Let $Z_0$  be a specialization of $Z_1$ as in Construction~\ref{cons:notationlinearsystem}. Then we say that the linear system $\LL_0$ is a specialization of  $\LL_1$.
\end{dfn}

\begin{remark}
\label{rem:castelnuovo}  
 Let $H\subset\p^n$ be a hyperplane through the point $p_1$ and $Z_1:=\{p_2^{m_2},\ldots,p_s^{m_s}\}\cup\{t_1,\ldots,t_j\}$ a 0-dimensional scheme as in Construction~\ref{cons:notationlinearsystem}. Assume that $\{p_2,\ldots,p_h\}\subset H$ and $\{t_1,\ldots,t_l\}\subset\T_{p_1}H$, then a classical way to study special linear system is via the {\it Castelnuovo exact sequence} that in this case reads
 \begin{eqnarray*}
   0\to\LL_{n,d-1}((m_1-1)[j-l],m_2-1,\ldots,m_h-1,m_{h+1},\ldots,m_s)\to\\
\to\LL_{n,d}(m_1[j],m_2,\ldots,m_s)\to\LL_{n-1,d}(m_1[l],m_2,\ldots,m_h).
 \end{eqnarray*}

Therefore the nonspeciality of the linear systems 
$$\LL_{n,d-1}((m_1-1)[j-l],m_2-1,\ldots,m_h-1,m_{h+1},\ldots,m_s)\ {\rm and}\ \LL_{n-1,d}(m_1[l],m_2,\ldots,m_h)$$ implies the nonspeciality of $\LL_{n,d}(m_1[j],m_2,\ldots,m_s)$. 
\end{remark}
We state for future reference the following well known fact.
	\begin{lem}\label{indpdti}Let $\LL$ be a linear system on a smooth projective variety $X$ and 
                $C\subset X$ a positive dimensional subvariety. Assume $cod_{\LL}
                |\LL\otimes\I_C|=s$. Set
                $x_1,\dots,x_s\in C$ general points, then
                $x_1,\dots,x_s$ impose independent conditions
                to $\LL$.
	\end{lem}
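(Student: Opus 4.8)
The plan is to produce the $s$ points one at a time, checking that each new general point of $C$ imposes a condition independent of the previous ones. Write $\LL_0:=\LL$ and, having chosen general points $x_1,\dots,x_i\in C$, set $\LL_i:=|\LL\otimes\I_{\{x_1,\dots,x_i\}}|$. Proving the lemma amounts to showing that for every $i$ with $0\le i<s$ a general point $x_{i+1}\in C$ satisfies $\dim\LL_{i+1}=\dim\LL_i-1$; equivalently, that $C\not\subseteq\Bs\LL_i$, so that the general point of $C$ is not forced to lie on every divisor of $\LL_i$. Since $C$ is irreducible and positive dimensional, this is exactly the statement that a general point of $C$ is not a base point, which by the usual argument yields one further independent condition: a divisor $D\in\LL_i$ avoiding $x_{i+1}$ shows $|\LL_i\otimes\I_{x_{i+1}}|\subsetneq\LL_i$, hence codimension exactly one.

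The key step is a dimension count. Assume $x_1,\dots,x_i$ are general points imposing independent conditions, so that $\dim\LL_i=\dim\LL-i$, and suppose for contradiction that $C\subseteq\Bs\LL_i$. Then every divisor of $\LL_i$ contains $C$, whence $\LL_i\subseteq|\LL\otimes\I_C|$ as linear subsystems of $\LL$. Comparing dimensions and using the hypothesis $\codim_\LL|\LL\otimes\I_C|=s$ gives
\[
\dim\LL-i=\dim\LL_i\le\dim|\LL\otimes\I_C|=\dim\LL-s,
\]
so $i\ge s$. Therefore, as long as $i<s$ we have $C\not\subseteq\Bs\LL_i$, and the inductive step applies. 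Iterating from $i=0$ to $i=s-1$ produces points $x_1,\dots,x_s\in C$ imposing $s$ independent conditions on $\LL$, i.e. $\dim|\LL\otimes\I_{\{x_1,\dots,x_s\}}|=\dim\LL-s$.

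To pass from this sequential construction to a genuinely general $s$-tuple, I would invoke semicontinuity: the locus in $C^{\times s}$ of tuples imposing independent conditions on $\LL$ is open, and the construction above exhibits a point of it, so it is dense. Hence general $x_1,\dots,x_s\in C$ impose independent conditions, as claimed.

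I do not expect a serious obstacle, the lemma being elementary; the only point requiring care is the containment $\LL_i\subseteq|\LL\otimes\I_C|$ feeding the dimension count. It is here that the hypotheses are used essentially: the irreducibility of $C$ is what turns ``a general point of $C$ lies in $\Bs\LL_i$'' into ``all of $C$ lies in $\Bs\LL_i$,'' and the positive dimensionality of $C$ guarantees the existence of the $s$ distinct general points needed to run the induction to its end.
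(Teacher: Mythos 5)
Your proof is correct. Note that the paper states this lemma as a ``well known fact'' and gives no proof of its own, so there is nothing to compare against; your argument --- the induction showing $C\not\subseteq\Bs\LL_i$ for $i<s$ via the containment $\LL_i\subseteq|\LL\otimes\I_C|$ and the dimension count, followed by the openness-plus-irreducibility step to pass from a sequentially chosen tuple to a general one --- is exactly the standard justification of the statement.
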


We apply the above remarks to  prove the nonspeciality of some linear systems we will use along the proof of Theorem~\ref{th:1}.
\begin{pro}
	\label{pro:3piudoppinonspeciale}
	Let $n\geq 3$ and $d\geq 4$, define
	$$r(n,d):=\begin{cases}
	\left\lceil\frac {{n+d\choose n}}{n+1}\right\rceil-n-1 &
	\mbox{ if either } n\neq 3 \mbox{ or } (n,d)=(3,4);\\
	\\
	\left\lceil\frac {{d+3\choose 3}}{4}\right\rceil-5 & \mbox{ if } n=3.\\	
	\end{cases}$$
	The linear system $\LL_{n,d}(3,2^a)$ is nonspecial if $a\leq r(n,d)$.
\end{pro}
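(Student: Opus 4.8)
The goal is to show that $\LL_{n,d}(3,2^a)$ is nonspecial for $a\le r(n,d)$, where one triple point and $a$ double points are in general position in $\p^n$. The plan is to proceed by a double induction on the dimension $n$ and the degree $d$, using the Castelnuovo restriction sequence of Remark~\ref{rem:castelnuovo} to peel off a hyperplane at each step. First I would observe that the number $r(n,d)$ is chosen precisely so that the virtual dimension of $\LL_{n,d}(3,2^a)$ is as close to $-1$ (or to $n$, in the Cremona range) as the integrality of the binomial count permits; since by definition $\dim\ge\expdim$, it suffices to prove the reverse inequality $\dim\le\expdim$, i.e. that the imposed conditions are independent. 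Equivalently, one wants to show the $3$-ple point of multiplicity imposing $\binom{n+2}{n}$ conditions together with the $a$ double points each imposing $n+1$ conditions, behave independently.

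The core of the argument is the Castelnuovo exact sequence. I would place the triple point $p_1$ and as many of the double points as the hyperplane $H$ can accommodate on $H$, choosing the number $l$ of tangent directions of the triple point that lie in $\T_{p_1}H$ and the number $h-1$ of double points on $H$ so as to split the count of conditions optimally. The restricted system on $H\cong\p^{n-1}$ is $\LL_{n-1,d}(3[l],2^{h-1})$ and the kernel (residual) system is $\LL_{n,d-1}(2[j-l],2^{?},1^{?})$ after reducing multiplicities by one; the sequence then reduces nonspeciality of the full system to nonspeciality of the restricted system (lower dimension, same degree) and the residual system (same dimension, lower degree). The numerology must be arranged so that both the restricted and residual systems are again of a type covered by the inductive hypothesis, by the Alexander--Hirschowitz Theorem~\ref{thm:AH} (when all multiplicities have dropped to $\le 2$), or by the Ciliberto--Miranda result for plane systems $\LL_{2,d}(3,2^h)$ in the base case $n=2$. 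The special case $n=3$, and the low-degree constraint $d\ge 4$, are treated by the separate definition of $r(3,d)$, reflecting the extra exceptional behaviour of cubics and quartics in low dimension.

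The main obstacle will be the bookkeeping of the numerology: one must check that the chosen split $(l,h)$ simultaneously keeps the virtual dimensions of the restricted and residual systems in the admissible (nonspecial) range and avoids the exceptional lists of Theorem~\ref{thm:AH}. In particular the residual system has its double points demoted to simple base points and its triple point demoted to a double point, so after one step the systems may no longer be of the clean form $\LL_{n,d}(3,2^a)$; Lemma~\ref{indpdti} is then the tool to add the required number of general simple base points on a suitable subvariety and conclude independence. I expect the delicate points to be (i) verifying that the ceiling functions in the definition of $r(n,d)$ make the induction close up without leaving a gap or an overlap in the conditions counted, and (ii) handling the finitely many boundary degrees (especially $d=4$ and the $n=3$ branch) where the generic estimate is tight and an ad hoc specialization argument, as permitted by Construction~\ref{cons:notationlinearsystem} and semicontinuity, is needed to exhibit a nonspecial member.
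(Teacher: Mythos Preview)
Your overall idea of using Castelnuovo's exact sequence and semicontinuity is right, but the actual route taken in the paper differs from yours in two essential ways, and one point in your plan is simply confused.

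First, the paper does \emph{not} run a double induction: it inducts on $d$ alone, with the base case $d=4$ imported from \cite[Lemma~2.4]{Pos}. There is no descent to $n=2$ and no appeal to Ciliberto--Miranda here. Second, and this is what makes the single induction work, the triple point $q$ is placed \emph{off} the hyperplane $H$, not on it. One specializes $h=\lceil\binom{d+n-1}{n-1}/n\rceil-1$ of the double points onto $H$, and Castelnuovo then reads
\[
0\to \LL_{n,d-1}(3,2^{a-h},1^h)\to \LL_{n,d}(3,2^a)\to \LL_{n-1,d}(2^h).
\]
The restricted system on $H$ has only double points and is nonspecial directly by Alexander--Hirschowitz (since $d\ge5$). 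For the kernel one first checks, via Lemma~\ref{indpdti} and a computation that $\dim\LL_{n,d-2}(3,2^{a-h})\le 0$, that the $h$ simple points on $H$ impose independent conditions; then one verifies $a-h\le r(n,d-1)$ and applies the induction hypothesis in degree $d-1$ to $\LL_{n,d-1}(3,2^{a-h})$. The cases $(n,d)=(3,5),(3,6)$ require a one-line direct check.

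Your plan to put the triple point on $H$ is not absurd, but it forces the restricted system to carry a triple point, which in turn forces an induction on $n$ and a separate base in low dimension; the paper's choice avoids this entirely. Also, there are no tangent directions in $\LL_{n,d}(3,2^a)$, so the parameter $l$ you introduce (``the number of tangent directions of the triple point that lie in $\T_{p_1}H$'') has no meaning here; that machinery belongs to the later sections, not to this proposition. Finally, your reading of $r(n,d)$ as making the virtual dimension close to $-1$ or to $n$ is not what is going on: $r(n,d)$ is chosen so that the inequality $a-h\le r(n,d-1)$ closes up in the induction, not to hit a target virtual dimension.
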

\begin{proof}
	We prove the statement by induction on $d$. It is clear that it is enough to prove it for $a=r(n,d)$. 
	The first step of induction is $d=4$ 
	and it is the content of
	\cite[Lemma 2.4]{Pos}. 
	
	Assume $d\geq 5$. 
	Let $Z_1:=\{q^3,p_1^2,\ldots,p_a^2\}$ be a 0-dimensional scheme. Fix a hyperplane $H$ not containing $q$. Let $Z_0$ be a specialization  without collisions of $Z_1$ with
	$$h:=\left\lceil\frac{{d+n-1\choose n-1}}n\right\rceil-1$$
	points on the hyperplane $H$. 
	Since $q\not\in H$ the  Castelnuovo exact sequence reads
	$$0\to \LL_{n,d-1}(3,2^{a-h},1^h)\to \LL_{n,d}(3,2^a)\to\LL_{n-1,d}(2^h),$$
	where the simple base points are all on the hyperplane $H$.
	Since $d\geq 5$ the linear system on the right is nonspecial by Theorem \ref{thm:AH}. Therefore to conclude it is enough to prove that 
	$\LL_{n,d-1}(3,2^{a-h},1^h)$ is nonspecial. 
	
	\begin{claim}
		$\expdim\LL_{n,d-1}(3,2^{a-h},1^h)$ is non negative.
	\end{claim}
	\begin{proof}
		Assume first that $n> 3$. Then 
		$$\expdim\LL_{n,d-1}(3,2^{a-h},1^h)=\binom{n+d-1}{n}-\binom{n+2}{2}-(n+1)a+nh-1\geq
		n^2-\binom{n+2}{2}-1\geq 0.\\
		$$
		Assume that $n=3$. Then
		$$\expdim\LL_{3,d-1}(3,2^{a-h},1^h)=\binom{d+2}{3}-10-4a+3h-1\geq16-14>0.$$
		
	\end{proof}
	We start proving that the simple base points impose independent conditions.
	The points are general in $H$, therefore, by Lemma~\ref{indpdti}, we have to check that  
	$$\dim|\LL_{n,d-2}(3,2^{a-h})|\le 0.$$
This is clear for $d=4$. For $d\geq 5$ by Theorem \ref{thm:AH},
checking also the special cases, we have
	\begin{equation}\label{eq:simplepointssinistra}
		\dim\LL_{n,d-2}(3,2^{a-h})\le\dim\LL_{n,d-2}(2^{a-h+1})=\binom{n+d-2}{n}-1-(n+1)(a-h+1).
	\end{equation}
	For $n=3$ this reads
	\begin{align}
		\dim\LL_{n,d-2}(3,2^{a-h})&\le\binom{d+1}{3}-4\left(\left\lceil\frac {{d+3\choose 3}}{4}\right\rceil-5-\left\lceil\frac {{d+2\choose 2}}{3}\right\rceil+2\right)-1\nonumber\\
		&\le\binom{d+1}{3}-{d+3\choose 3}+\frac{4}{3}{d+2\choose 2}+15\nonumber\\
		&=-\binom{d+1}{2}+\frac{1}{3}{d+2\choose 2}+15\nonumber=\frac{1-d^2}{3}+15\nonumber<0
	\end{align}
	for every $d\ge 7$. We check the rest of the cases by a direct computation.
	\begin{enumerate}
		\item[$(n,d)=(3,5)$] In this case $a=9$ and $h=6$ and the
		linear system
		$\LL_{3,3}(3,2^{3})$ has a unique element. 
		\item[$(n,d)=(3,6)$] In this case $a=16$ and
		$h=9$. An element of the linear system $\LL_{3,4}(3,2^7)$ has to
		contain all quadrics passing through the 8
		singular points. This shows that $\LL_{3,4}(3,2^7)$ is empty.
	\end{enumerate}
	Inequality (\ref{eq:simplepointssinistra}) for $n>3$ yields
	\begin{align}
		\dim\LL_{n,d-2}(3,2^{a-h})&\le\binom{n+d-2}{n}-(n+1)\left(\left\lceil\frac {{n+d\choose n}}{n+1}\right\rceil-\left\lceil\frac {{n+d-1\choose n-1}}{n}\right\rceil-n+1\right)-1\nonumber\\
		&\le\binom{n+d-2}{n}-{n+d\choose n}+{n+d-1\choose n-1}+\frac {{n+d-1\choose n-1}}{n}+n^2+n-1\nonumber\\
		&=-{n+d-2\choose n-1}+\frac {{n+d-1\choose n-1}}{n}+n^2+n-1\nonumber\\
		&=-\frac{(n+d-2)!}{(n-1)!(d-1)!}+\frac {(n+d-1)!}{n!d!}+n^2+n-1\nonumber\\
		&=\frac{(n+d-2)!}{(n-1)!(d-1)!}\left[-1+\frac{n+d-1}{nd}\right]+n^2+n-1\nonumber\\
		&=\binom{n+d-2}{n-1}\cdot\frac{(1-n)(d-1)}{nd}+n^2+n-1.\nonumber
	\end{align}
	The latter decreases as $d$ increases, so 
	\begin{align}
		\dim\LL_{n,d-2}(3,2^{a-h})&\le\binom{n+d-2}{n-1}\cdot\frac{(1-n)(d-1)}{nd}+n^2+n-1\nonumber\\
		&\le\frac{4}{5}\binom{n+3}{4}\cdot\frac{1-n}{n}+n^2+n-1\nonumber\\
		&=\frac{-n^4 - 5n^3 + 25n^2 + 35n - 24}{30}<0\nonumber\end{align}
	for every $n>3$.

	To conclude we prove the nonspeciality of $\LL_{n,d-1}(3,2^{a-h})$.
	For this observe that
\begin{align}
	a-h-&r(n,d-1)\le\frac {{n+d\choose n}}{n+1}-\frac{{d+n-1\choose
            n-1}}{n}-\frac {{n+d-1\choose n}}{n+1}+2\nonumber\\
	&=\frac{1}{n(n+1)}\left[n{n+d\choose n}-(n+1){d+n-1\choose
            n-1} -n{n+d-1\choose n}\right]+2\nonumber\\
&=-\frac{{n+d-1\choose n-1}}{n(n+1)}+2<1\nonumber.
\end{align}
Therefore  we conclude by induction hypothesis.
\end{proof}

\begin{lem}
	\label{lem:simple_points_hyp} Let $\frac{{d+3\choose 3}}4-1>b\geq 1$ and
	$d\geq 4$ be integers. Fix  
	$q\in \Pi\subset\p^n$ a linear space of dimension 3 and $b$ general points on it, say
	$x_1,\ldots, x_b$. Then the linear system
	$$\LL_{n,d}(2^{a},1^b)(q,p_1,\ldots,p_{a-1},x_1,\ldots,x_b)$$
	is nonspecial if 
	$$a\leq a(n,d):= \left\lfloor\frac{{n+d\choose n}-b-1}{n+1}\right\rfloor-\max\{0,n-4\},$$
	where the $p_i$ are general points.
	For the special case $(d,b)=(4,5)$ we prove a better estimate
	$$a(n,4)=\left\lfloor\frac{{n+4\choose
			n}-5-1}{n+1}\right\rfloor-\max\{(n-7),1\}$$
\end{lem}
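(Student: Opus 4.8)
The plan is to argue by induction on $n$, reducing the speciality question to the Alexander--Hirschowitz theorem (Theorem~\ref{thm:AH}) together with the independence of the simple points, which I would control via Lemma~\ref{indpdti}. Throughout I would work inside the specialization machinery of Construction~\ref{cons:notationlinearsystem}: by the semicontinuity of $\h^0$ it suffices to exhibit a single flat limit of the configuration whose limit system is nonspecial. The geometric feature to exploit is that the distinguished points $q,x_1,\dots,x_b$ all lie on the fixed $3$-plane $\Pi$, so $\Pi$ is the natural receptacle for the induction and $\Pi\subset H$ will be the right hyperplane choice.

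For the base case $n=3$ one has $\Pi=\p^3=\p^n$, so $q$ is merely a general double point and the $x_i$ are general simple points. Here $\LL_{3,d}(2^a)$ is nonspecial by Theorem~\ref{thm:AH}, the bound $a(3,d)$ being chosen precisely so that $a$ stays strictly below the only relevant exceptional value $(3,4,9)$. Once $\LL_{3,d}(2^a)$ is nonspecial, the inequality $a\le a(3,d)$ forces $\virtdim\LL_{3,d}(2^a,1^b)\ge 0$, hence $b\le\dim\LL_{3,d}(2^a)$, and Lemma~\ref{indpdti} applied with $C=\p^3$ shows that $b$ general points impose independent conditions. Nonspeciality of the full system follows at once.

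For the inductive step $n-1\to n$, valid for $n\ge 4$ since we need $\dim H=n-1\ge 3$ to contain $\Pi$, I would take a general hyperplane $H\supset\Pi$ and specialize $c$ of the general double points $p_i$ onto $H$. In the Castelnuovo sequence of Remark~\ref{rem:castelnuovo} the restriction term is $\LL_{n-1,d}(2^{1+c},1^b)$ — the double points being $q$ together with the $c$ specialized points, and the simple points being the $x_j$, all still lying on $\Pi\subset H$ — so it is again a system of the type in the statement, now on $\p^{n-1}$, and is nonspecial by the inductive hypothesis as soon as $1+c\le a(n-1,d)$. The residual term is $\LL_{n,d-1}(2^{a-1-c},1^{1+c})$, whose double points are general off $H$ and whose simple points ($q$ and the $c$ specialized points) sit on $H$; its double-point part is nonspecial by Theorem~\ref{thm:AH}, and the simple points impose independent conditions by a second application of Lemma~\ref{indpdti} with $C=H$, the relevant codimension being $\dim\LL_{n,d-1}(2^{a-1-c})-\dim\LL_{n,d-2}(2^{a-1-c})$. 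By Remark~\ref{rem:castelnuovo} the nonspeciality of both pieces yields nonspeciality of the limit, and hence of $\LL_{n,d}(2^a,1^b)$.

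The hard part will be the numerology: one must produce an integer $c$ with $0\le c\le a-1$ that simultaneously satisfies $1+c\le a(n-1,d)$ for the restriction and keeps both $\LL_{n,d-1}(2^{a-1-c})$ in the nonspecial Alexander--Hirschowitz range and $1+c$ below the residual codimension above. Balancing these constraints, taking $c$ roughly equal to $a(n-1,d)-1$ and estimating the residual $a-1-c$ via $\binom{n+d}{n}=\binom{n+d-1}{n-1}+\binom{n+d-1}{n}$, is exactly what forces the correction $-\max\{0,n-4\}$, which I expect to accumulate one unit per inductive step from the roundings in the floor functions. The remaining difficulty is the low-degree exceptional behaviour: when $d=4$ the residual has degree $3$, so one meets the Alexander--Hirschowitz exception $(4,3,7)$ together with the genuinely special configuration $(d,b)=(4,5)$; both I would treat by a direct argument, which is what obliges us to replace the generic bound by the sharper value of $a(n,4)$ recorded for that case.
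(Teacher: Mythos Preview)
Your proposal is correct and is essentially the paper's own argument: induction on $n$ via a hyperplane $H\supset\Pi$, the Castelnuovo sequence, Alexander--Hirschowitz for the double-point part, and Lemma~\ref{indpdti} for the simple points, with the choice $c+1=a(n-1,d)$. The one wrinkle is that the paper treats $n=4$ as a second base case by applying Lemma~\ref{indpdti} directly with $C=\Pi$ (now a hyperplane of $\p^4$) rather than as the first inductive step---consistent with the fact that $\max\{0,n-4\}$ does \emph{not} pick up a unit at $3\to 4$, contrary to your ``one unit per step'' heuristic---so you will need to adjust that transition accordingly.
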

\begin{proof} Note that since $b\geq 1$, $d\geq 4$ and the virtual dimension is
	non negative then $\LL_{n,d}(2^{a})$ is nonspecial by
	Theorem~\ref{thm:AH}. Therefore we have only to care about the
	simple points.
	
	For $n=3$ the statement is immediate. 
	For $n=4$, by Lemma~\ref{indpdti}, it is enough to check that
	$\dim\LL_{4,d-1}(2^{a(4,d)-1},1)\leq 0$. By
        Theorem~\ref{thm:AH} it is enough to check that 
$\virtdim\LL_{4,d-1}(2^{a(4,d)-1},1)<0$. The latter is a simple computation.
	
	Next we prove the statement by induction on  $n$.
	For $n=i+1\geq 5$ fix a general hyperplane $H\supset\Pi$ and consider
	a degeneration with $a(i,d)$ points on $H$.
	Then, by Castelnuovo exact
	sequence, we
	have to prove the nonspeciality of
	$$\LL_{i,d}(2^{a(i,d)},1^b),\ {\rm and}\
	\LL_{i+1,d-1}(2^{a(i+1,d)-a(i,d)},1^{a(i,d)})$$
	the former is nonspecial by the induction step. For the latter note
	that
	
	\begin{align}\
		a(i+1,d)-a(i,d)&\ge\frac{{i+1+d\choose i+1}-b-1}{i+2}-\frac{{i+d\choose i}-b-1}{i+1}-2\nonumber\\
		&>\frac{{i+1+d\choose i+1}-b-1}{i+2}-\frac{{i+d\choose i}-b-1}{i+2}-2\nonumber\\
		&>\frac{{i+d\choose i+1}}{i+2}-2\ge\frac{{i+d-1\choose i+1}}{i+2}.\nonumber
	\end{align}
	
	For $d\geq 5$ by Theorem~\ref{thm:AH} the linear system
	$\LL_{i+1,d-2}(2^{a(i+1,d)-a(i,d)})$ is empty.
	For $d=4$ it is easy to see that
	$$a(i+1,4)-a(i,4)>i, $$
	and again $\LL_{i,2}(2^{a(i+1,4)-a(i,4)})$ is empty. 
	Let $a(i,d)=\left\lfloor\frac{{i+d\choose
			i}-b-1}{i+1}\right\rfloor-\alpha(i)$, then
	$\alpha(i+1)=\alpha(i)+1$. Then we have
	\begin{equation}
		\label{eq:a-a}
		\virtdim \LL_{i+1,d-1}(2^{a(i+1,d)-a(i,d)},1^{a(i,d)})\geq (i+2)\alpha(i+1)-(i+1)(\alpha(i)+1))-1>0
	\end{equation}
	and
	Lemma~\ref{indpdti} proves that the linear system $
	\LL_{i+1,d-1}(2^{a(i+1,d)-a(i,d)},1^{a(i,d)})$ is non special to 
	conclude this case.
	
	Assume that $d=4$ and $b=5$. We first prove that $\LL_{5,4}(2^{19},1^5)$ is non
	special. Observe that degenerating $12$ double
	points on a hyperplane by Castelnuovo exact sequence we have that
	$\LL_{5,4}(2^{19})$ decomposes in 
	$\LL_{4,4}(2^{12})$ and $\LL_{5,3}(2^7,1^{12})$. It is easy to check
	that these two linear systems are non special and $\dim\LL_{5,3}(2^7,1^{12})=1$.
	The linear system $\LL_{4,3}(2^{11})$ is empty therefore there is at
	most a pencil of divisors in $\LL_{5,4}(2^{19})$ that contains a given
	$\p^3$ through a double point. 
	By hypothesis $\virtdim
	\LL_{5,4}(2^{19},1^5)>1$ hence  this linear
	system  is not
	special. To conclude the statement for $d=4$ we then argue exactly as
	in the first part of the proof, checking Equation~(\ref{eq:a-a}) case
	by case for $n\leq 8$ and then conclude as in the general case.
\end{proof}

At the first stage of our proof we need to allow some points to collapse.

\begin{construction}[Specialization with $h$ collapsing points]\label{not:collapse}
Let $V=\A^n\times\Delta$ and  $\pi:V\to\Delta$ be the canonical projection. Fix $h$ general sections  $\{\sigma_1,\ldots,\sigma_{h}\}$ such that $\sigma_i(0)=q$.
Let $Z:=\cup_i\sigma_i(\Delta)^{m_i}$ and $\nu:V\to \A^n$ be the projection.

Let $X\to V$ be the blow up of $V$ at the point $q$, with exceptional divisor $W$. Then we have natural morphisms
$\nu_X:X\to\A^n$, a degeneration $\pi_{X}:X\to\Delta$, and sections
$\sigma_{X,i}:\Delta\to X$. 
The fiber $X_0$ is given by $W\cup V_0$, 
where $V_0$ is $\A^n$ blown up in one point and $W\cong\p^n$. Let $R=W\cap V_0\cong\p^{n-1}$ be the exceptional divisor of this blow up. 
We want to stress that since the sections $\sigma_i$'s are general $\{\sigma_{X,i}(0)\}$ are general points of $W$. 

\end{construction}

Our main concern is to determine $Z_0$ the flat limit of $Z_t:=Z_{|V_t}$ in Construction~\ref{not:collapse}.
This is in general quite hard and we only have
partial answers to this question. Nonetheless once we understand the limit, we may study the speciality of a linear system via its specializations with collapsing points, using the same technique described in Construction~\ref{rem:specialize_to_get_non_speciality} and Remark~\ref{rem:castelnuovo}.

	\begin{lem}\label{lem:molteplicita_minima} The multiplicity in $q$ of $Z_0$ is at least the minimum integer $j$ such that the linear system $\LL_{n,j}(m_1,\dots,m_h)$  is not empty.
			\end{lem}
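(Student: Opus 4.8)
The plan is to identify $\mult_q Z_0$ with the minimal order of vanishing at $q$ of a function in $I_{Z_0}$, and to read off that order from the exceptional divisor $W$ of Construction~\ref{not:collapse}. Set $j_0:=\min\{j:\LL_{n,j}(m_1,\dots,m_h)\neq\vu\}$. Working locally at $q$, as permitted by Proposition~\ref{basic}~ii), we have $\mult_q Z_0=\min\{\operatorname{ord}_q f: f\in I_{Z_0},\ f\neq 0\}$, where $\operatorname{ord}_q$ is the order of vanishing at $q$; so $\mult_q Z_0\ge j_0$ amounts to saying that every nonzero $f\in I_{Z_0}$ vanishes to order at least $j_0$ at $q$. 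Since multiplying by a general linear form gives $\LL_{n,j}(m_1,\dots,m_h)\neq\vu\Rightarrow\LL_{n,j+1}(m_1,\dots,m_h)\neq\vu$, it is enough to prove, for each nonzero $f\in I_{Z_0}$, that $\LL_{n,\operatorname{ord}_q f}(m_1,\dots,m_h)\neq\vu$, as then $\operatorname{ord}_q f\ge j_0$.

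First I would use the flatness of $\oo_Z$ over $\oo_\Delta$ to lift such an $f$. Tensoring $0\to I_Z\to\oo_V\to\oo_Z\to 0$ with the residue field at $0$ stays left exact, so $I_{Z_0}$ is the image of $I_Z$ in $\oo_{V_0}$; hence any $f\in I_{Z_0}$ equals $F|_{t=0}$ for some $F\in I_Z$, that is, $F(x,t)$ vanishes to order $m_i$ along each section $\sigma_i(\Delta)$ and $F|_{t=0}=f$. Let $e:=\operatorname{ord}_q F$ and let $F_{\mathrm{lead}}$ be its leading form: this is a nonzero degree-$e$ form on $W\cong\p^n$, namely the degree-$e$ homogeneous part of $F$ in the variables $(x,t)$ read off in $\Bl_q V$. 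Because restriction to the hyperplane $\{t=0\}$ through $q$ cannot lower the order, $\operatorname{ord}_q f\ge e$.

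The crux is to show that $F_{\mathrm{lead}}$ lies in $\LL_{n,e}(m_1,\dots,m_h)$, i.e. has multiplicity at least $m_i$ at the points $w_i:=\sigma_{X,i}(0)\in W$, which are general by Construction~\ref{not:collapse}. The divisor $D=\{F=0\}$ has multiplicity $m_i$ along $\sigma_i(\Delta)$; since the blow up is centered at the point $q\in\sigma_i(\Delta)$, its strict transform $\tilde D$ still has multiplicity $m_i$ along $\sigma_{X,i}$, and $\{F_{\mathrm{lead}}=0\}=\tilde D\cap W$. As $\sigma_{X,i}$ passes through $w_i$, a local equation of $\tilde D$ at $w_i$ lies in $\mathfrak m_{w_i}^{m_i}$, and restricting to $W$, which is not a component of $\tilde D$, keeps the order $\ge m_i$; thus $\mult_{w_i}F_{\mathrm{lead}}\ge m_i$. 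Therefore $F_{\mathrm{lead}}\in\LL_{n,e}(m_1,\dots,m_h)\neq\vu$, so $e\ge j_0$ and $\operatorname{ord}_q f\ge e\ge j_0$. As $f$ was an arbitrary nonzero element of $I_{Z_0}$, we conclude $\mult_q Z_0\ge j_0$.

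I expect the main obstacle to be making the last paragraph fully rigorous: one must verify in explicit local coordinates on the blow up (the chart $x_k=t\xi_k$) that the $m_i$-fold vanishing of $F$ along $\sigma_i(\Delta)$ forces $m_i$-fold vanishing of $F_{\mathrm{lead}}$ at $w_i$, equivalently that the flat limit meets $W$ in a subscheme containing $\bigcup_i m_i\,w_i$, and that the limit points $w_i$ are genuinely general, which is where the generality of the sections enters. Everything else is formal bookkeeping once this local transfer of multiplicity from the section to the exceptional divisor is established.
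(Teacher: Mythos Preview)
Your proposal is correct and follows essentially the same route as the paper. The paper compresses the argument into three lines: it sets $\mu:=\mult_q Z$, observes $\mult_q Z_0\ge\mu$, identifies $\I_{X,Z}|_W$ with $\LL_{n,\mu}(m_1,\dots,m_h)$, and invokes global generation of $\I_Z$ to force this restriction to be nonempty. Your element-by-element argument---lift $f\in I_{Z_0}$ to $F$, take the leading form on $W$, and check it lies in $\LL_{n,e}(m_1,\dots,m_h)$---is exactly the unpacking of ``$\I_{X,Z}|_W\sim\LL_{n,\mu}(m_1,\dots,m_h)$ is nonempty'', and the obstacle you flag (transferring the $m_i$-fold vanishing along $\sigma_i$ to $m_i$-fold vanishing of $F_{\mathrm{lead}}$ at $w_i$) is precisely what that identification encodes; it is routine in the chart $x_k=t\xi_k$ as you anticipate.
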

                        \begin{proof}
By definition we have
$$\mult_qZ_0\geq\mult_q Z=:\mu.$$
                          Let $\I_{X,Z}$ be the ideal associated to $Z$ on $X$, then  $\I_{X,Z|W}\sim
\LL_{n,\mu}(m_1,\ldots,m_h)$. The ideal $\I_{Z}$ is globally generated therefore the linear system $\I_{X,Z|W} $ has to be non empty.                         \end{proof}



Due to the lack of a classification of special linear systems, it is in general quite hard to determine the exact value predicted by Lemma~\ref{lem:molteplicita_minima}. 
In this paper we are only interested in collapsing of double points, the more general case of arbitrary multiplicities will be treated elsewhere. Theorem~\ref{thm:AH} describes completely the special linear systems of type $\LL_{n,d}(2^h)$.

The next result proves that for double points, outside the small list of special linear systems in Theorem \ref{thm:AH}, the multiplicity of the limit is the one predicted by Lemma~\ref{lem:molteplicita_minima}.

\begin{cor}\label{gradodoppi} Let $\pi$ be a specialization with $h$ collapsing points. Assume that  $m_1=\dots=m_h=2$, $h\geq n+1$ and $(n,h)\neq (2,5), (3,9), (4,7), (4,14)$. Then the multiplicity of $Z_0$ in $q$ is the minimum integer $j$ such that $${n+j\choose n}-h(n+1)>0.$$
\end{cor}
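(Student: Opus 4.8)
The plan is to pin down $\mu:=\mult_q Z_0$ by squeezing it between a lower bound, supplied by Lemma~\ref{lem:molteplicita_minima}, and an upper bound, forced by the flatness of $\pi$. Write $j_0$ for the minimal integer $j$ with $\binom{n+j}{n}-h(n+1)>0$; the goal is to show $\mu=j_0$.

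For the lower bound, Lemma~\ref{lem:molteplicita_minima} gives $\mu\geq j_{\min}$, where $j_{\min}$ is the least degree $j$ for which $\LL_{n,j}(2^h)\neq\emptyset$. The key observation is that, under our hypotheses, $\LL_{n,j}(2^h)$ is nonspecial for \emph{every} $j$, so its nonemptiness is detected by the virtual dimension alone. Indeed, the assumption $h\geq n+1$ removes the family $(n,2,h)$ with $2\leq h\leq n$ from the list in Theorem~\ref{thm:AH}, and the four excluded pairs $(n,h)=(2,5),(3,9),(4,7),(4,14)$ are precisely the remaining exceptional triples $(2,4,5),(3,4,9),(4,3,7),(4,4,14)$. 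Hence no triple $(n,j,h)$ occurring here is special, so $\LL_{n,j}(2^h)$ is nonempty exactly when $\virtdim\LL_{n,j}(2^h)=\binom{n+j}{n}-1-h(n+1)\geq 0$, i.e. when $\binom{n+j}{n}-h(n+1)>0$. This identifies $j_{\min}=j_0$ and yields $\mu\geq j_0$.

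For the upper bound, recall that $\mult_q Z_0=\mu$ entails $\I_{Z_0}\subseteq\mathfrak{m}_q^\mu$, so $Z_0$ contains the fat point cut out by $\mathfrak{m}_q^\mu$, of length $\binom{n+\mu-1}{n}$. On the other hand $\pi$ is flat and the fibre $Z_t$ for $t\neq 0$ consists of $h$ disjoint double points, each of length $n+1$; hence $\deg Z_0=\deg Z_t=h(n+1)$. Consequently $\binom{n+\mu-1}{n}\leq h(n+1)$, while the minimality of $j_0$ gives $h(n+1)<\binom{n+j_0}{n}$. Therefore $\binom{n+\mu-1}{n}<\binom{n+j_0}{n}$, and since $k\mapsto\binom{n+k}{n}$ is strictly increasing we conclude $\mu\leq j_0$. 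Combining the two inequalities gives $\mu=j_0$, as claimed.

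The only genuinely delicate point is the bookkeeping inside the lower bound: one must check that, for the retained values of $(n,h)$, \emph{no} degree $j$ produces an Alexander--Hirschowitz exceptional system, so that nonemptiness is equivalent to nonnegativity of the virtual dimension uniformly in $j$. This is exactly where the hypothesis $h\geq n+1$ and the precise list of four excluded pairs are used. The upper bound, by contrast, is a pure length count forced by flatness and requires no case analysis.
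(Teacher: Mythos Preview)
Your proof is correct and follows essentially the same approach as the paper's: Lemma~\ref{lem:molteplicita_minima} together with Alexander--Hirschowitz gives the lower bound $\mu\geq j_0$, and the flatness length count gives the upper bound $\mu\leq j_0$. Your write-up is in fact a bit more explicit than the paper's in spelling out why the hypotheses $h\geq n+1$ and $(n,h)\neq(2,5),(3,9),(4,7),(4,14)$ eliminate all Alexander--Hirschowitz exceptions uniformly in $j$, but the argument is the same.
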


\begin{proof}  As before let $V_0$ be the central fiber and $q\in V_0$ the collapsing point.  Let $Z$ be the scheme supported on the union of $\sigma_i(\Delta)$'s with multiplicity $2$. Let $Z_0$ be the flat limit of $Z_t$, and $\mu$  the minimum integer such that ${n+\mu\choose n}-h(n+1)>0$. The numerical assumption we made, together with Theorem \ref{thm:AH}, ensure that $\mu$ is the minimum integer such that $\LL_{n,\mu}(2^h)$ is not empty and 
\begin{equation}
  \label{eq:AH_dimension}
  \dim \LL_{n,\mu}(2^h)={n+\mu\choose n}-h(n+1)>0.
\end{equation}
Lemma~\ref{lem:molteplicita_minima} tells us $\mult_qZ_0\geq \mu$. Further note that ${n+\mu\choose n}$ is also the degree of the $(\mu+1)$-tuple point of $\p^n$, and $h(n+1)$ is the degree of the 0-dimensional scheme $Z_1$.  By flatness the latter is preserved therefore equation~(\ref{eq:AH_dimension}) shows that $\mult_q Z_0\leq \mu$.
\end{proof}

In some special cases the multiplicity is enough to compute the limit scheme.
			
	\begin{es} \label{5su4plo}
Let us collapse 7 double points in $\pp^2$, then $(n,h)=(2,7)$. The scheme consisting of those points has length $21$. Since $\h^0(\oo_{\pp^2}(5))=15$, they cannot lie on a quintic curve. On the other hand, they lie on a sextic, so the minimum degree we are looking for is $\mu=6$. By Corollary~\ref{gradodoppi} the limit scheme contains a 6-ple point. A 6-ple point in $\pp^2$ has length $21$. Then by ii) in Proposition \ref{stessogrado} the limit scheme is a 6-ple point.

Let $(n,h)=(3,5)$. The scheme consisting of those points has length $20$. Since $\h^0(\oo_{\pp^3}(3))=20$, they cannot lie on a cubic surface. By Corollary~\ref{gradodoppi} the multiplicity we are looking for is $4$. As before observe that a 4-ple point in $\pp^3$ has length $20$. Then by ii) in Proposition \ref{stessogrado} the limit scheme is a 4-ple point.
	\end{es}

When the scheme we are specializing does not have the degree of a multiple point this analysis is not enough to determine the limit scheme.

Let us start with a limit scheme $Z_0\subset \A^n$ and assume we are not in the exceptions of 
 Corollary~\ref{gradodoppi}. Therefore the multiplicity $\mult_qZ_0=\mu$
 is the minimum number such that 
$\LL_{n,\mu}(2^h)$ is not empty. In the setup of Construction~\ref{not:collapse} we have a degeneration
$\pi_{X}:X\to\Delta$, with central fiber $X_0=W\cup V_0$,  $W\cong\p^n$ is the exceptional divisor of the blow up of $q$, and $R=W\cap V_0$. 

Let $\Sigma_X$ be the smooth scheme associated to the sections $\sigma_{X,i}(\Delta)$.
Let $\X\to X$ be the blow up of the ideal sheaf
$\I_{\Sigma_X}$, with exceptional divisors $\E_i$, and $\varphi:\X\to\Delta$ the degeneration  onto $\Delta$.
Note that this blow up is  an isomorphism in a neighborhood of  $V_0$,
then the central fiber is
$$\X_0:=\varphi^{-1}(0)=P\cup V_0,$$
 where $P$ is the blow up of $W$ in $h$ general
points. As before let $R=P\cap V_0$.

Then the linear systems we are interested in are $\LL:=\oo_\X(-\sum_i2\E_i-\mu P)$ and its restrictions $\LL_P$, $\LL_R$, to $P$ and $R$.
The linear system $\LL$ is complete and we aim to understand
when its restrictions stay complete.

Let us start with an instructive example.				
\begin{es}\label{3su3plo}
	Consider $(n,h)=(2,3)$. Then the limit is a  point of multiplicity 3. On
        the other hand $\deg Z_1=9$ while a 3-ple point has degree $6$,
        therefore the limit is not the 3-ple point. There are
        3 more linear conditions the liner system has to satisfy. To
        understand them observe that a plane cubic with 3 double
        points in general position is a union of 3 lines. These lines
        intersect the divisor $R$ providing the further linear
        condition the limit has to satisfy. In particular the linear
        system $\LL_R$ is
 not complete.				\end{es}

	\begin{lem}\label{lem:L_Pcomplete} The linear system $\LL_P$ is complete.
		\begin{proof}
			Consider the exact sequence
			$$0\to\LL(-P)\to\LL\to\LL_P\to 0.$$
		To prove the claim it is enough to show that $\h^1(\LL(-P)) = 0$.
						
			The sheaf $\LL(-P)\sim \oo(\sum_i 2\E_i-(\mu+1)P)$ is the pull-back of the ideal sheaf $\cup_i\I_{\sigma_i(\Delta)}^{2}\cup{\bf m}_{p}^{\mu+1}$ on $\A^n\times\Delta$. Hence we have
$$H^1(\LL(-P)) =H^1(\cup_i\I_{\sigma_i(\Delta)}^{2}\cup{\bf m}_{p}^{\mu+1}) = 0.$$
					\end{proof}
				\end{lem}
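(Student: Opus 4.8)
The plan is to exhibit $\LL_P$ as the restriction to $P$ of the complete system $\LL$ and to show that this restriction is surjective on global sections. On the smooth variety $\X$ one has the tautological exact sequence
$$0\to\LL(-P)\to\LL\to\LL_P\to 0,$$
obtained from the structure sequence $0\to\oo_\X(-P)\to\oo_\X\to\oo_P\to 0$ of the Cartier divisor $P$ after tensoring by $\LL$. Passing to cohomology, the restriction map $H^0(\X,\LL)\to H^0(P,\LL_P)$ is surjective --- which, since $\LL$ is complete, is exactly the assertion that $\LL_P$ is a complete linear system on $P$ --- as soon as $H^1(\X,\LL(-P))=0$. The whole lemma thus reduces to this single vanishing, where $\LL(-P)=\oo_\X\big(-\sum_i 2\E_i-(\mu+1)P\big)$.

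To compute it I would push everything down to the affine base. Let $b\colon\X\to\A^n\times\Delta$ be the composite of the two blow-ups of Construction~\ref{not:collapse}: first the point $q$, with exceptional divisor $W$ whose strict transform is $P$, then the sections $\Sigma_X$, with exceptional divisors $\E_i$. Since the sections are not contained in $W$, the strict transform of $W$ equals its total transform, so $\oo_\X(-(\mu+1)P)=b^*\mathbf{m}_q^{\mu+1}$; combined with the $\E_i$ this identifies $\LL(-P)$ as the pullback of the ideal sheaf $\I$ of the subscheme $\bigcup_i\sigma_i(\Delta)^2\cup q^{\mu+1}\subset\A^n\times\Delta$, the union of the doubled sections with the $(\mu+1)$-fold point at $q$. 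Granting that $b_*\LL(-P)=\I$ and $R^jb_*\LL(-P)=0$ for $j>0$, the Leray spectral sequence gives $H^1(\X,\LL(-P))=H^1(\A^n\times\Delta,\I)$, and this vanishes because $\A^n\times\Delta$ is affine (Stein): the cohomology of the coherent sheaf $\I$ is zero in positive degree by Serre's vanishing, or by Cartan's Theorem~B in the analytic category.

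The only genuine work, and the step I expect to be the crux, is this push-forward computation: that the tower of blow-ups both annihilates the higher direct images, $R^jb_*\LL(-P)=0$ for $j>0$, and returns exactly $\I$ rather than a larger ideal. Both facts follow from the centers being smooth and from the coefficients $2$ and $\mu+1$ being nonnegative. Running the standard sequences $0\to\oo(-(k+1)E)\to\oo(-kE)\to\oo_E(k)\to 0$ along each exceptional divisor $E$ (a $\p^{c-1}$-bundle over its center) and using $H^{>0}(\p^{c-1},\oo(k))=0$ for $k\ge 0$, one obtains the vanishing of the higher direct images inductively and, at the same time, the identifications $b_{1*}\oo(-(\mu+1)W)=\mathbf{m}_q^{\mu+1}$ and $b_{2*}\oo(-\sum_i 2\E_i)=\bigcap_i\I_{\sigma_i}^{2}$, whose intersection is precisely $\I$.
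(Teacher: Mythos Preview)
Your proof is correct and follows essentially the same approach as the paper's: both reduce completeness of $\LL_P$ to the vanishing $H^1(\X,\LL(-P))=0$ via the restriction sequence, identify $\LL(-P)=\oo_\X(-\sum_i 2\E_i-(\mu+1)P)$ with the pull-back of the ideal sheaf on the affine base $\A^n\times\Delta$, and conclude from vanishing of coherent cohomology on an affine (Stein) variety. The paper simply asserts the identification and the resulting vanishing in one line, whereas you spell out the Leray spectral sequence step and the vanishing of the higher direct images that justify it.
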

                               
The last technical result we need is the following.
				
	\begin{lem} \label{indip} Let $A=\{a_1,\dots,a_{l}\}$ be a set
          of $l$ general points in $\pp^n$, $n\geq 2$, and $R$ a hyperplane such that $A\cap R=\varnothing$. Let
	$$B=\{\langle a_i,a_j\rangle\cap R\}_{i,j\in\{1,\ldots,l\}}.$$
Assume that $l= n+1$, then  $\LL_{n-1,2}(B)$
is nonspecial, that is the points of $B$ impose independent
conditions to quadrics.
		\begin{proof}
 $B$ is a set
of ${n+1\choose2}$ points. Therefore it is enough to prove that there
are no quadrics containing $B$. 
We prove the claim by induction on $n$. For $n=2$ it is easy. 
Let $\Pi_{i}=\langle a_1,\ldots,\hat{a_i},\ldots, a_{n+1}\rangle\cap R$, then by induction
there are no quadrics in $\Pi_{i}$ containing $\Pi_{i}\cap B$. Therefore any
quadric containing $B$ has to contain the hyperplanes $\Pi_{i}$ for any
$i$. This is enough to conclude since $n+1\geq 3$.
\end{proof}
	\end{lem}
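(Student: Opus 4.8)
The plan is to reduce the speciality statement to a non-containment assertion and then induct on $n$. Since $R\cong\p^{n-1}$, the space of quadratic forms on $R$ has dimension $\binom{n+1}{2}$, and $B$ consists of exactly $\binom{n+1}{2}$ points. Hence the evaluation map sending a quadratic form to its vector of values at the points of $B$ is a linear map between spaces of equal dimension, and it is surjective (the defining property of imposing independent conditions) if and only if it is injective. So I would restate the lemma as: for $l=n+1$ general points, \emph{no} nonzero quadric of $R$ vanishes on all of $B$, and prove this.

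The base case is $n=2$, where $B$ is three distinct points on the line $R\cong\p^1$; since a nonzero binary quadratic form has at most two roots, none vanishes on all three, which settles this case. For the inductive step I fix $i\in\{1,\dots,n+1\}$ and set $H_i:=\Span{a_1,\dots,\hat{a_i},\dots,a_{n+1}}$ and $\Pi_i:=H_i\cap R$. Because the $n$ points $\{a_j\}_{j\neq i}$ are general, $H_i\cong\p^{n-1}$ is a genuine hyperplane meeting $R$ properly, so $\Pi_i$ is a hyperplane of $R$, i.e. $\Pi_i\cong\p^{n-2}$. The points of $B$ lying on $\Pi_i$ include all intersections $\Span{a_j,a_k}\cap R$ with $j,k\neq i$ (each such line lies in $H_i$, hence meets $R$ inside $H_i\cap R=\Pi_i$), and these are precisely the configuration attached to the $n$ general points $\{a_j\}_{j\neq i}$ of $H_i$ with respect to the hyperplane $\Pi_i$.

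Now I would suppose, for contradiction, that $Q\subset R$ is a nonzero quadric vanishing on $B$. Restricting $Q$ to $\Pi_i$ gives a quadric of $\Pi_i\cong\p^{n-2}$ vanishing on the subconfiguration above; by the inductive hypothesis applied to the $n$ general points $\{a_j\}_{j\neq i}$ of $H_i$, that subconfiguration admits no containing quadric, forcing $Q|_{\Pi_i}\equiv 0$, so $Q$ contains the hyperplane $\Pi_i$. As $i$ was arbitrary, $Q$ contains the $n+1$ pairwise distinct hyperplanes $\Pi_1,\dots,\Pi_{n+1}$ of $R$. But a quadric containing a hyperplane $\{\ell=0\}$ is divisible by $\ell$, hence factors as a product of two linear forms and contains at most two hyperplanes; since $n+1\geq 3$ this forces $Q=0$, a contradiction. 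This closes the induction, and with the reduction above proves $\LL_{n-1,2}(B)$ nonspecial.

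The argument is essentially formal once the configuration is set up, so I do not anticipate a deep obstacle; the only delicate point is the bookkeeping in the inductive step. One must verify, using genericity of the $a_i$, that each $\Pi_i$ is honestly a hyperplane of $R$ (so the inductive hypothesis is applied in the correct dimension $\p^{n-2}$), that the $\Pi_i$ are pairwise distinct, and that the points $\Span{a_j,a_k}\cap R$ with $j,k\neq i$ coincide with the $l=n$ configuration inside $\Pi_i$. It is precisely the inequality $n+1\geq 3$, i.e. $n\geq 2$, that makes the final count of hyperplanes fatal for a degree-two form.
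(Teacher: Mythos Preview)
Your proof is correct and follows essentially the same approach as the paper: reduce to showing that no quadric of $R$ contains $B$, induct on $n$, and in the inductive step use that $Q$ would have to contain each of the $n+1\geq 3$ hyperplanes $\Pi_i=\Span{a_1,\dots,\hat{a_i},\dots,a_{n+1}}\cap R$. The paper's version is terser, but the argument is identical; you have simply made explicit the verifications (dimension of $\Pi_i$, applicability of the inductive hypothesis inside $H_i$, the ``at most two hyperplanes'' bound for a quadric) that the paper leaves to the reader.
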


\begin{rmk}\label{nongenerici}
Note that even if $B$ imposes independent conditions to quadrics, the
points of $B$ are not in linear general  position. Indeed there are
${n+1\choose t}$ linear spaces of dimension $t-2$ each containing
${t\choose 2}$ points of $B$. For every choice of $t$ points of $A$,
their span is a $\pp^{t-1}$, so the corresponding $\binom{t}{2}$ points of $B$ lie on a $\pp^{t-2}$.
\end{rmk}

We are ready to compute the limits we need.	
					
	\begin{pro} \label{collision1} Let $n\ge 2$ and $Z_1$ a scheme of $n+1$ double points. Let $\pi$
          be a specialization with $n+1$ collapsing points in $\A^n$. Let $Z_0$ be the flat limit, then $Z_0$ is a point of multiplicity 3 together with $\binom{n+1}{2}$ tangent directions.
		\begin{proof}
There are no quadrics singular along $n+1$ general points of $\p^n$ and for $n\geq 2$ the linear system $\LL_{n,3}(2^{n+1})$ has non negative dimension, then  by Corollary \ref{gradodoppi}, the multiplicity of the limit scheme is $3$. Note that the degree of a 3-ple point is $\binom{n+2}{2}$.
						
The base locus of cubics in $\pp^n$ with
                        $(n+1)$ general double points consists of the
                        $\binom{n+1}{2}$ lines joining the
                        points. Each of them cuts a simple point on
                        $R\simeq\pp^{n-1}$. By  Lemma~\ref{indip} these points impose independent conditions to cubics. Then by i) in Proposition~\ref{basic}
$$(n+1)^2=\deg Z_0\geq \binom{n+2}{2}+\binom{n+1}{2}=(n+1)^2.$$ 
Hence the triple point, together with ${n+1\choose 2}$ tangent directions, is a 0-dimensional scheme contained in $Z_0$ and with
the same degree, and  we conclude by Proposition \ref{basic}.\end{proof}
				\end{pro}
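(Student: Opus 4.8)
The plan is to determine $Z_0$ by combining a multiplicity computation with a length count, and then to recognize it as an explicit $0$-dimensional scheme through ii) of Proposition~\ref{basic}. Since we are collapsing $n+1$ sections of multiplicity $2$ to the single point $q$, flatness gives $\deg Z_0=\deg Z_1=(n+1)(n+1)=(n+1)^2$, the length of $n+1$ double points of $\A^n$. A triple point of $\A^n$ has length $\binom{n+2}{2}$, and since $(n+1)^2-\binom{n+2}{2}=\binom{n+1}{2}$, it is enough to exhibit inside $Z_0$ a triple point decorated with exactly $\binom{n+1}{2}$ independent tangent directions: the two lengths will then agree and Proposition~\ref{basic} will force equality.

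First I would compute $\mult_q Z_0$. Here $h=n+1$, and $(n,n+1)$ is never one of the exceptional pairs $(2,5),(3,9),(4,7),(4,14)$, so Corollary~\ref{gradodoppi} applies and tells us $\mult_q Z_0$ equals the least $j$ with $\binom{n+j}{n}-(n+1)^2>0$. No quadric is singular at $n+1$ general points, so $\LL_{n,2}(2^{n+1})$ is empty, which rules out $j=2$; on the other hand for $n\geq 2$ the system $\LL_{n,3}(2^{n+1})$ has nonnegative virtual dimension. Hence $\mu:=\mult_q Z_0=3$, and $Z_0$ contains a triple point of length $\binom{n+2}{2}$.

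Next I would locate the tangent directions geometrically. Any cubic with double points at $p_i$ and $p_j$ restricts to a binary cubic on the line $\langle p_i,p_j\rangle$ vanishing to order $\geq 2$ at each of the two points, hence to order $\geq 4>3$; so that line lies in the base locus, and in fact the base locus of cubics through $n+1$ general double points is exactly the $\binom{n+1}{2}$ such lines. In the blow-up of Construction~\ref{not:collapse}, with $W\cong\p^n$ the exceptional divisor over $q$, $R=W\cap V_0\cong\p^{n-1}$, and $a_i:=\sigma_{X,i}(0)$ general points of $W$, each line $\langle a_i,a_j\rangle$ meets $R$ in one point, producing precisely the set $B$ of Lemma~\ref{indip}. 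These lines persist in the central fiber as tangent directions at $q$, so $Z_0$ contains the triple point together with the $\binom{n+1}{2}$ tangent directions cut out by $B$; call this subscheme $T$.

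It remains to see that these tangent directions are genuinely independent, i.e. that $\deg T=\binom{n+2}{2}+\binom{n+1}{2}=(n+1)^2$. The conditions they impose are that the cubic tangent cone of the triple point pass through the points of $B\subset R\cong\p^{n-1}$; by Lemma~\ref{indip} the points of $B$ impose independent conditions already on quadrics of $R$, and independence in degree $2$ propagates to degree $3$ (multiply a realizing quadric by a linear form nonvanishing on $B$), so each tangent direction contributes a new condition. Thus $T\subseteq Z_0$ with $\deg T=(n+1)^2=\deg Z_0$, and ii) of Proposition~\ref{basic} gives $T=Z_0$. I expect the independence step to be the real obstacle: by Remark~\ref{nongenerici} the points of $B$ are far from being in general position, so their imposing independent conditions is not automatic and must be checked by hand — this is exactly what Lemma~\ref{indip} was isolated to supply, and it is what makes the length bookkeeping close up precisely.
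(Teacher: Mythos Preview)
Your proof is correct and follows essentially the same route as the paper's: compute $\mult_q Z_0=3$ via Corollary~\ref{gradodoppi}, identify the $\binom{n+1}{2}$ tangent directions as the traces on $R$ of the secant lines in the base locus of $\LL_{n,3}(2^{n+1})$, and close by a length count together with Proposition~\ref{basic}. You are in fact a bit more careful than the paper in two spots: you check that $(n,n+1)$ avoids the exceptional list in Corollary~\ref{gradodoppi}, and you explicitly bridge Lemma~\ref{indip} (stated for quadrics on $R\cong\p^{n-1}$) to the independence on \emph{cubics} that the tangent-direction length count actually needs.
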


				



\section{The induction step}\label{sec:induction}

In this section we develop the induction argument we need to prove Theorem~\ref{thm:Cremona-identifiability}. Thanks to \cite{Me1} we may assume $n\geq d$. We also assume $d\geq 4$. For cubics a different strategy is needed since linear systems of cubics with a triple point and double points are always special. 
s
Fix a point $q\in\p^n$ and a general linear space $\Pi\ni q$, of
dimension $3$. Let $Z_n$ be a scheme having:
\begin{itemize}
\item[-] multiplicity $3$
in $q$ together with ${n+1\choose 2}-s(3,d,n)$ general tangent
directions in $q$ and $s(3,d,n)$ tangent directions on $\Pi$
\item[-] $k(n,d)-n-2-h(3,d,n)$ general double points, $h(3,d,n)$ general double points on $\Pi$.
\end{itemize}
We will define the integers $h(3,d,n)$ and $s(3,d,n)$ later on.

At the linear system level let 
$$\LL_n(d):=|\oo_{\p^n}(d)\otimes\I_{Z_n}|.$$
We aim to prove the nonspeciality of $\LL_n(d)$. For this we
degenerate the scheme $Z_n$ as follows.
Fix a general hyperplane $H\subset\p^n$ containing $\Pi$. Let $Z^{H}_n$ be a
specialization of $Z_n$ such that $H$ contains $h(n-1,d,n)$ double points and $s(n-1,d,n)$ tangent directions, at the point $q$. Let $\LL^H_n(d)$ be the specialized linear system.

By Remark~\ref{rem:specialize_to_get_non_speciality} the nonspeciality of $\LL_n(d)$ is implied by the nonspeciality of
$\LL_{n}^H(d)$. For the latter we use the Castelnuovo exact sequence in Remark~\ref{rem:castelnuovo}. Therefore we are left to prove that the restricted linear system 
$$\LL_{n-1,d}(3[s(n-1,d,n)],2^{h(n-1,d,n)})$$
and the kernel of the restriction map
$$\LL_{n,d-1}(2[s(n,d,n)-s(n-1,d,n)],2^{k(n,d)-n-2-h(n-1,d,n)},1^{h(n-1,d,n)})$$
are nonspecial, with the points satisfying the prescribed requirements. To prove the nonspeciality of these linear systems we set up an induction argument choosing a sequence of integer in such a way that the first linear system has a unique divisor and then use induction for the latter.  
Let us introduce the following notation.
\begin{dfn}
  \label{dfn:induction} Let $s(i,d,n)$ and $h(i,d,n)$ be non-negative integers. Assume $n\geq d$ and $d\geq 4$.
Fix a general flag of linear spaces 
$$H_2\subset H_3\subset H_4\subset\ldots\subset H_{n-1}\subset H_n=\p^n,$$
with $H_i\cong\p^i$, $H_3=\Pi$ and $q\in H_2$. For $i\in\{2,\dots,n\}$, consider a 0-dimensional scheme $Z_i\subset H_i$ such that:
\begin{itemize}
\item[-] $Z_{i-1}$ is a flat limit of $Z_{i|H_{i-1}}$,
\item[-] $Z_i$ has multiplicity $3$
in $q$ together with $s(i,d,n)$ tangent directions,  $s(i-1,d,n)$ of which are tangent directions supported in $H_{i-1}$, 
\item[-] for every $i\in\{3,\dots,n\}$, $Z_i$ has $h(i,d,n)$ double points, $h(i-1,d,n)$ of which are supported in $H_{i-1}$.
\item[-] $Z_2$ has $h(2,d,n)$ double points.
\end{itemize}
Let 
$$\LL^{H}_{i}(d)=|\oo_{\p^i}(d)\otimes\I_{Z_i}|.$$
\end{dfn}

The following is the first step toward proving the induction we need.
\begin{lem}
  \label{lem:firstdegeneration} Assume that, for any $n\geq i\geq 2$
  there are integers $h(i,d,n)\geq h(i-1,d,n)$ and
  $s(i,d,n)\geq s(i-1,d,n)$ such that
  \begin{itemize}
  \item[i)]  $\expdim\LL^H_{i}(d)=i$,
\item[ii)] $\dim\LL^{H}_{i-1}(d)=i-1$,
\item[iii)]   $\expdim\LL^{H}_{i}(d)-H_{i-1}=0$,
\item[iv)] $\dim\LL^{H}_{i}(d)-2H_{i-1}\le 0$,
\item[v)] there is at most one divisor $D\in
  \LL^{H}_{i}(d)$ with  $\mult_qD>3$,
\item[vi)]  for $i> 3$,
  $\LL^{H}_{i-1}(d)_{|\Pi}=\LL_{3,d}(3[s(3,d,n)],2^{h(3,d,n)})$.
\item[vii)]  $h(i-1,d,n)-h(3,d,n)+s(i,d,n)-s(i-1,d,n)>(i-4)(i+1)$, for
  $i\geq 5$ and $d\geq 5$,

 $h(i-1,4,n)-h(3,4,n)+s(i,4,n)-s(i-1,4,n)>(i+1)$, for
  $5\leq i\leq 8$,

 $h(i-1,4,n)-h(3,4,n)+s(i,4,n)-s(i-1,4,n)>(i-7)(i+1)$, for
  $i\geq 9$.
  \end{itemize}
Then
\begin{enumerate}
\item $\dim\LL^{H}_{i}(d)-H_{i-1}=0$,
\item the divisor in $\LL^{H}_{i}(d)-H_{i-1}$ does not contain $\Pi$,
\item $\LL^{H}_{i}(d)$ is nonspecial,
\item for $i\geq 3$, $\LL^{H}_{i}(d)_{|\Pi}=\LL_{3,d}(3[s(3,d,n)],2^{h(3,d,n)})$.
\end{enumerate}

\end{lem}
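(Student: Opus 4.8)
The plan is to fix $i$ and study $\LL^H_i(d)$ by specialising to the flag hyperplane $H_{i-1}$, applying the Castelnuovo exact sequence of Remark~\ref{rem:castelnuovo} twice. The first application, to $\LL^H_i(d)$ itself, reads
$$0\to \LL^H_i(d)-H_{i-1}\to \LL^H_i(d)\to \LL^H_{i-1}(d),$$
with restriction map $\rho$; the second, to the residual system $\LL^H_i(d)-H_{i-1}$, reads
$$0\to \LL^H_i(d)-2H_{i-1}\to \LL^H_i(d)-H_{i-1}\to \big(\LL^H_i(d)-H_{i-1}\big)_{|H_{i-1}},$$
with restriction map $\rho'$. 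Hypotheses (ii) and (vi), which record the situation one dimension down, play the role of the inductive input, so no further induction is needed inside the lemma; for $i=3$ claim (4) is vacuous since $\Pi=H_3$. The whole argument is then a sandwiching of dimensions along these two sequences.

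First I would show that (2), (3) and (4) are formal once (1) is known. Granting $h^0(\LL^H_i(d)-H_{i-1})=1$ from (1), the first sequence gives $h^0(\LL^H_i(d))=1+\dim\im\rho$ with $\dim\im\rho\le h^0(\LL^H_{i-1}(d))=i$ by (ii); on the other hand $\dim\LL^H_i(d)\ge\expdim\LL^H_i(d)=i$ by (i), so $h^0(\LL^H_i(d))\ge i+1$. These bounds force $h^0(\LL^H_i(d))=i+1$, which is the nonspeciality (3), and $\dim\im\rho=i$, which says $\rho$ is surjective onto $\LL^H_{i-1}(d)$. Since the restriction $\LL^H_i(d)_{|\Pi}$ factors through $\rho$ and $\Pi\subseteq H_{i-1}$, surjectivity together with (vi) yields (4). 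For (2) I would argue that a residual divisor containing $\Pi$ would, after adding back $H_{i-1}$, produce a member of $\LL^H_i(d)$ with $\mult_q>3$ beyond the single one allowed by (v), a contradiction.

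The hard part will be (1), that is $\dim(\LL^H_i(d)-H_{i-1})=0$. The lower bound $\ge 0$ is immediate from (iii) and the inequality $\dim\ge\expdim$; the content is the upper bound, where I would use the second sequence. By (iv) the doubly residual system $\LL^H_i(d)-2H_{i-1}$ contributes $h^0\le 1$, so everything reduces to bounding $\dim\im\rho'$, i.e. to controlling the trace on $H_{i-1}$ of the degree $d-1$ residual system. That trace is a linear system carrying a double point with tangent directions at $q$, a set of double points, and the former double points of $H_{i-1}$ dropped to simple points; its nonspeciality and exact dimension are precisely what the numerology in (vii) is engineered to force, through the interpolation results Proposition~\ref{pro:3piudoppinonspeciale} and Lemma~\ref{lem:simple_points_hyp}, with Lemma~\ref{indip} controlling the simple points cut out on $R$ by the lines through the collapsed triple point and Lemma~\ref{indpdti} turning the count into independence of conditions. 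The bookkeeping of how many tangent directions and double points are placed on $H_{i-1}$ versus off it, encoded in $s(\cdot,d,n)$ and $h(\cdot,d,n)$, is exactly what makes the two sequences align so that $\im\rho'$ attains its expected dimension and the upper bound in (1) comes out.

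Finally I would dispose of the low-degree anomalies: the three-range split in (vii) for $d=4$ mirrors the sporadic special systems of Theorem~\ref{thm:AH}, so those values of $i$ must be checked against the sharpened estimate of Lemma~\ref{lem:simple_points_hyp}. The genuine obstacle throughout is the upper bound in (1): keeping the base locus of the residual system under control across the specialisation, so that no unexpected divisor through $\Pi$ or through $H_{i-1}$ inflates $\dim\im\rho'$ and breaks the dimension count.
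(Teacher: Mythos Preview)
Your derivation of (3) and (4) from (1) together with surjectivity of $\rho$ is correct and matches the paper. The gaps are in your arguments for (1) and (2).

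For (2): containing $\Pi$ does nothing to $\mult_q$. For $i\ge 4$ the space $\Pi$ has codimension $i-3\ge 1$ in $H_i$, so a divisor of degree $d-1$ may well contain it while keeping an ordinary double point at $q$; and even if it did force $\mult_qD>3$, a single such divisor is exactly what (v) permits, so no contradiction arises. The paper obtains (2) differently: condition (vii) is precisely the numerical hypothesis needed to apply Lemma~\ref{lem:simple_points_hyp} to the residual system $\LL_{i,d-1}(2[s_i-s_{i-1}],2^{h_i-h_{i-1}},1^{h_{i-1}})$, showing that the $h(3,d,n)$ simple base points lying on $\Pi$ impose independent conditions; this immediately gives that the general (hence, by (1), the unique) element does not contain $\Pi$.

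For (1): the paper does not run a second Castelnuovo sequence. It proves directly that the residual $\LL^H_i(d)-H_{i-1}$ is nonspecial, by checking that each block of conditions is independent on the ambient double--point system $\LL_{i,d-1}(2^{h_i-h_{i-1}+1})$, which is nonspecial by Theorem~\ref{thm:AH}. The role of (iv) is not to bound the $h^0$ of a kernel: it says the sublinear system containing $H_{i-1}$ has dimension $\le 0$, so by Lemma~\ref{indpdti} general points on $H_{i-1}$ (the simple base points off $\Pi$) impose independent conditions. Likewise (v) says at most one divisor has $\mult_q>3$, hence at most one residual divisor has $\mult_q>2$, and Lemma~\ref{indpdti} applied on the exceptional divisor shows the $s_i-s_{i-1}$ tangent directions are independent. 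Your proposed second restriction also mis-describes the trace: the $h_i-h_{i-1}$ double points lie \emph{off} $H_{i-1}$ and do not appear there, so the trace sits in $\LL_{i-1,d-1}(2,1^{h_{i-1}})$, which you have not shown to be empty. Finally, neither Lemma~\ref{indip} nor Proposition~\ref{pro:3piudoppinonspeciale} is used in this proof; the former belongs to the collision analysis of Section~\ref{sec:limits}, the latter enters only in Proposition~\ref{pro:induction_step_tuned} when one must control a genuine triple point in the residual.
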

\begin{proof} By assumption i) $h(3,d,n)<\frac{{d+3\choose 3}}4-1$, the linear system $\LL^{H}_{i}(d)-H_{i-1}$ is 
$$\LL_{i,d-1}(2[s(i,d,n)-s(i-1,d,n)],2^{h(i,d,n)-h(i-1,d,n)}, 1^{h(i-1,d,n)}),$$
and it has non negative expected dimension by iii). For $i=4$, using
Lemma~\ref{indpdti}, it is easy to check that the
simple points on $\Pi$ impose independent conditions  and the general element in  
$\LL^H_4(d)-\Pi$ does not contain $\Pi$.  By vii) and 
Lemma~\ref{lem:simple_points_hyp}, for $i>4$, the simple
base points on $\Pi$ impose independent conditions and the general element in  
$\LL^H_i(d)-H_{i-1}$ does not contain $\Pi$. This shows (2).  

By assumption iv) and Lemma~\ref{indpdti}, the other simple base points on $H_{i-1}$ impose independent conditions to $\LL_{i,d-1}(2^{h(i,d,n)-h(i-1,d,n)+1}).$

 Similarly assumption v) and Lemma~\ref{indpdti} ensure that the tangential directions in $q$ impose independent conditions to $\LL_{i,d-1}(2^{h(i,d,n)-h(i-1,d,n)+1}).$
Therefore we are left to prove that $\LL_{i,d-1}(2^{h(i,d,n)-h(i-1,d,n)+1})$ is
nonspecial. The latter is a linear system with only double points and positive expected dimension. Then by Theorem~\ref{thm:AH} we know it is nonspecial, keep also in mind Remark~\ref{rmk:AH-1}. This proves (1).

Then from the Castelnuovo exact sequence and assumption
ii) the linear system  $\LL^{H}_{i}(d)$ is non
special. To conclude observe that the nonspeciality of
$\LL^{H}_{i}(d)$, (1) and vi) yield (4).
\end{proof}

To apply Lemma~\ref{lem:firstdegeneration}
we have first to produce the sequences of integers $h(i,d,n)$ and $s(i,d,n)$.

\begin{pro}\label{inductionstep} Fix integers $n\geq d\geq 4$, and let $i\in\{3,\dots,n\}$. Assume that the number $k(n,d)$ defined in formula \ref{eq:definition_k(n,d)} is an integer
	. Then there are sequences $\{h(i,d,n)\}_{i\in\{2,\dots,n\}}$ and\\
	$\{s(i,d,n)\}_{i\in\{2,\dots,n\}}$ such that
	\begin{enumerate}
		\item $\expdim \LL_{i,d-1}(2[s(i,d,n)-s(i-1,d,n)],2^{h(i,d,n)-h(i-1,d,n)},1^{h(i-1,d,n)})=0,$
		\item $\expdim \LL_{i-1,d}(3[s(i-1,d,n)],2^{h(i-1,d,n)})=i-1,$
	\end{enumerate}
	and the following properties hold:
	\begin{enumerate}
		\item[i)] $h(n,d,n)=k(n,d)-1-n-1$,  
		$$h(n,d,n)-h(n-1,d,n)\geq \frac{d-1}{(n+2)(n+1)}\binom{n+d}{n} -3>0,$$
		$$ h(i+1,d,n)-h(i,d,n)\ge\frac{d-1}{(i+2)(i+1)}\binom{i+d}{i} -2>0$$ for every $i\in\{2,\dots,n-2\}$ and  $h(3,d,n)<{d+2\choose 3}-4$,
		\item[ii)] $s(n,d,n)=\binom{n+1}{2}$ and
		$$s(i-1,d,n)\in\left\{\frac{i^2-3i-2}{2},\dots,\frac{i^2-i-4}{2}\right\},$$
		\item[iii)]  $s(2,d,n)\ge 0$,
		\item[iv)] $s(i,d,n)\geq s(i-1,d,n)$ for every $i\in\{4,\dots,n-1\}$,
		\item[v)]  $s(i,d,n)-s(i-1,d,n)<{i+1\choose 2}$,
\item[vi)] $h(i-1,d,n)-h(3,d,n)+s(i,d,n)-s(i-1,d,n)>(i-4)(i+1)$, for
  $i\geq 5$ and $d\geq 5$,

 $h(i-1,4,n)-h(3,4,n)+s(i,4,n)-s(i-1,4,n)>(i+1)$, for
  $5\leq i\leq 8$,

 $h(i-1,4,n)-h(3,4,n)+s(i,4,n)-s(i-1,4,n)>(i-7)(i+1)$, for
  $i\geq 9$.
	\end{enumerate}
	\begin{proof}   
		
		To simplify notation we set for the moment
		$s_i:=s(i,d,n)$,  and $h_{i}:=h(i,d,n)$, $k:=k(n,d)$. Set $s_n=\binom{n+1}{2}$ and $h_n=k-1-n-1$. For $i\in\{3,\dots,n\}$ define
		$$a_i:=:a(i,d):=\binom{i+d-1}{i-1}-\frac{3i}{2}-\frac{i^2}{2}.$$
		
		The expected dimension of the linear system $\LL^H_{i,d-1}(2[s_{i}-s_{i-1}],2^{h_i-h_{i-1}},1^{h_{i-1}})$ is
		\[\exp_i:=\binom{d-1+i}{i}-1-(i+1)(h_i-h_{i-1}+1)-\left(s_i-s_{i-1}\right)-h_{i-1} \]
		Then assumption (1) reads $\exp_n=0$ and yields
		\[\binom{d-1+n}{n}-1-(n+1)(k-n-2-h_{n-1}+1)-h_{n-1}-\left(\binom{n+1}{2}-s_{n-1}\right)=0\]
		\begin{align}
		nh_{n-1}+s_{n-1}&=-\binom{d-1+n}{n}+2+(n+1)k-(n+1)(n+2)+n+\binom{n+1}{2}\nonumber\\
		&=-\binom{d-1+n}{n}+2+\binom{n+d}{n}-(n+1)(n+2)+n+\binom{n+1}{2}\nonumber\\
		&=\binom{n+d-1}{n-1}+2-(n+1)(n+2)+n+\binom{n+1}{2}\nonumber=a_n.
		\end{align}
		
Therefore $h_{n-1}$ and $s_{n-1}$ satisfy the following equation
		$$nh_{n-1}+s_{n-1}=a_n.$$
		Note that
		$\frac{n^2-n-4}{2}-\frac{n^2-3n-2}{2}=n-1$, therefore there  is a unique
		$t\in\left\lbrace
		\frac{n^2-3n-2}{2},\dots,\frac{n^2-n-4}{2}\right\rbrace $ such that
		$a_n-t$ is a multiple of $n$; call $s_{n-1}$ that number and
		define \[h_{n-1}=\frac{a_n-s_{n-1}}{n}.\]
		This also settles $(2)$, for $i=n$, since by construction  
		$$\exp\dim \LL_{n,d}(3[s(n,d,n)],2^{h(n,d,n)})=n$$ hence
		$$n=\exp_n+\exp\dim \LL_{n-1,d}(3[s(n-1,d,n)],2^{h(n-1,d,n)})+1.$$
		
		In a similar fashion $\exp_{n-1}=0$ gives
		\[(n-1)h_{n-2}+s_{n-2}=\binom{n+d-2}{n-2}-\frac{n^2}{2}-\frac{n}{2}+1=a_{n-1}.\]
		As before $\frac{n^2-3n-2}{2}-\frac{n^2-5n+2}{2}=n-2$, and there is a
		unique $t\in\left\lbrace
		\frac{n^2-5n+2}{2},\dots,\frac{n^2-3n-2}{2}\right\rbrace $ such that
		$a_{n-1}-t$ is a multiple of $n-1$; call $s_{n-2}$ that number, and define \[h_{n-2}=\frac{a_{n-1}-s_{n-2}}{n-1}.\]
		We iterate the argument, defining
		$s_{i-1}$ to be the only natural number in $\left\lbrace
		\frac{i^2-3i-2}{2},\dots,\frac{i^2-i-4}{2}\right\rbrace $ such that
		$i\mid a_i-s_{i-1}$. Hence we have
		\begin{equation}
		\label{eq:hsa}
		ih_{i-1}+s_{i-1}=a_i 
		\end{equation}
		and conditions ii) and iv) are satisfied
		.
		
		
		Next we check $s(2,d,n)\ge 0$. By defini
tion $s(2,d,n)\ge -1$.
		Assume that $s(2,d,n)=-1$. Then
		\[a(3,d)=\frac{(d+1)(d+2)}{2}-9\equiv -1\mbox{ (mod 3)}\]
		so $(d+1)(d+2)=6t+4\equiv 1$ (mod 3), and this is impossible because 1
		is irreducible in $\Z_3$. Therefore $s(2,d,n)\ge 0$ for any $d$.
		
		Let us check condition v)
		. Assume first that $i<n$. Then 
		$$2(s(i,d,n)-s(i-1,d,n))\leq (i+1)^2-i-5-i^2+3i+2=4i-2<(i+1)i, $$
		for $i\geq 3$.
		Assume $i=n$, then
		$$2(s(n,d,n)-s(n-1,d,n))\leq 2{n+1\choose 2}-n^2+3n+2=4n+2<n(n+1), $$
		for $n\geq 4$.
		
		Next we focus on  condition i).
		\begin{claim}\label{cl:disuguglianza_h}
			Set $i\ge 2
			$, then
			\begin{equation}
			h_n-h_{n-1}\ge \frac{d-1}{(n+2)(n+1)}\binom{n+d}{n}-3>0\nonumber
			\end{equation}
			and
			\begin{equation}
			\label{eq:h(i+1)-h(i)}
			h_{i+1}-h_{i}\ge\frac{d-1}{(i+2)(i+1)}\binom{i+d}{i}-2>0\mbox{ for every } i\le n-2.\nonumber
			\end{equation}
			\begin{proof}[Proof of the Claim]
				First assume $i=n-1$. Then we have, by Equation~(\ref{eq:hsa}),
				\begin{align}
				h_n-h_{n-1} & =\frac{\binom{n+d}{n}}{n+1}-n-3-\frac{a_n-s_{n-1}}{n}\geq\nonumber\\
				& \geq\frac{\binom{n+d}{n}}{n+1}-\frac{\binom{n+d-1}{n-1}}{n}-3= \nonumber\\
				& =\frac{1}{n(n+1)}\left[ n\binom{n+d}{n}-(n+1)\binom{n+d-1}{n-1}\right] -3= \nonumber\\
				& =\frac{d-1}{(n+1)n}\binom{n+d-1}{n-1} -3.\nonumber
				\end{align}
				The case $i<n-1$ is similar but a bit more painful, keep in mind Equation~(\ref{eq:hsa})
				\begin{align}
				h_{i+1}-h_i & =\frac{a_{i+2}-s_{i+1}}{i+2}-\frac{a_{i+1}-s_i}{i+1}\nonumber\\
				& \ge\frac{(i+1)a_{i+2}-(i+1)\cdot\frac{(i+2)^2-i-6}{2}-(i+2)a_{i+1}+(i+2)\cdot\frac{(i+1)^2-3(i+1)-2}{2}}{(i+2)(i+1)}\nonumber\\
				& =\frac{(i+1)a_{i+2}-(i+2)a_{i+1}}{(i+2)(i+1)}-\frac{3i^2+7i+6}{2(i+2)(i+1)}\nonumber\\
				& =\frac{1}{(i+2)(i+1)}\left[ (i+1)a_{i+2}-(i+2)a_{i+1}\right] -\frac{(i+1)(3i+4)}{2(i+2)(i+1)}-1\nonumber\\
				&=\frac{1}{(i+2)(i+1)}\left[ (i+1) \binom{i+d+1}{i+1}-(i+2)\binom{i+d}{i}\right]-\frac{1}{2}-\frac{3i+4}{2(i+2)}-1\nonumber\\
				&=\frac{1}{(i+2)(i+1)}\left[(i+1) \binom{i+d}{i+1}-\binom{i+d}{i}\right] -\frac{2i+3}{i+2}-1\nonumber\\
				& \ge\frac{1}{(i+2)(i+1)}\left[i\frac{(i+d)!}{(i+1)!(d-1)!}+\frac{(i+d)!}{(i+1)!(d-1)!}-\frac{(i+d)!}{i!d!}\right]-\frac{2i+4}{i+2}-1\nonumber\\
				& =\frac{(i+d)!}{i!(i+2)(i+1)(d-1)!}\left[\frac{i}{i+1}+\frac{1}{i+1}-\frac{1}{d}\right]-3\nonumber\\
				& =\frac{(i+d)!(d-1)}{i!d!(i+2)(i+1)}-3\nonumber\\
				&=\frac{d-1}{(i+2)(i+1)}\binom{i+d}{i}-3,\nonumber
				\end{align}
				
				
				

				
				
				for every $i\ge 2$.
				Note that $h_{i+1}-h_i$ increases as $d$ does. For $d=4$ we have
				\begin{align}
				h(i+1,4,n)-h(i,4,n) & \ge\frac{3}{(i+2)(i+1)}\binom{i+4}{4}-3=\frac{i^2 + 7i -12}{8}>0,\nonumber
				\end{align}
				for $i\geq 2$.
			\end{proof}
		\end{claim}
The Claim proves i). We are left with vi). Assume first $i\ge 5$ and $d\ge 5$.
\begin{align}
	h(i-1,d,n)&-h(3,d,n)+s(i,d,n)-s(i-1,d,n)-(i-4)(i+1)\nonumber\\
	&\ge h(i-1,d,n)-h(3,d,n)-(i-4)(i+1)\nonumber\\
	&=\frac{a(i,d)-s(i-1,d,n)}{i}-\frac{a(4,d)-s(3,d,n)}{4}-(i-4)(i+1)\nonumber\\
	&\ge\frac{\binom{i+d-1}{i-1}-\frac{3i}{2}-\frac{i^2}{2}-\frac{i^2-i-4}{2}}{i}-\frac{\binom{d+3}{3}-14-1}{4}-(i-4)(i+1).\nonumber
\end{align}
The latter increases as $d$ does, so
\begin{align}
	&\frac{\binom{i+d-1}{i-1}-\frac{3i}{2}-\frac{i^2}{2}-\frac{i^2-i-4}{2}}{i}-\frac{\binom{d+3}{3}-14-1}{4}-(i-4)(i+1)\nonumber\\
	\ge& \frac{\binom{i+4}{5}-\frac{3i}{2}-\frac{i^2}{2}-\frac{i^2-i-4}{2}}{i}-\frac{\binom{8}{3}-14-1}{4}-(i-4)(i+1)\nonumber\\
	=&\frac{i^5 + 10i^4 - 85i^3 + 290i^2 - 846i + 240}{120i}>0\nonumber
\end{align}
for every $i\ge 5$. Now assume $d=4$. For $5\leq i\le 8$ we have
\begin{align}
	h(i-1,4,n)&-h(3,4,n)+s(i,4,n)-s(i-1,4,n)-i-1\nonumber\\
	&=h(i-1,4,n)+s(i,4,n)-s(i-1,4,n)-i-6\nonumber\\
	&\ge h(i-1,4,n)-i-6\nonumber\\
	&= \frac{a(i,4)-s(i-1,4,n)}{i}-i-6\nonumber\\
	&\ge\frac{\binom{i+3}{4}-\frac{3i}{2}-\frac{i^2}{2}-\frac{i^2-i-4}{2}}{i}-i-6\nonumber\\
	&=\frac{i^4 + 6i^3 - 37i^2 - 162i + 48}{24i}>0\nonumber
\end{align} for $i\ge 6$. For $i=5$ we have
\[h(4,4,n)-h(3,4,n)+s(5,4,n)-s(4,4,n)-6=9-5+9-5-6>0.\]
Assume now $i\ge 9$.
\begin{align}
	h(i-1,4,n)&-h(3,4,n)+s(i,4,n)-s(i-1,4,n)-(i+1)(i-7)\nonumber\\
	&\ge\frac{a(i,4)-s(i-1,4,n)}{i}-5-(i+1)(i-7)\nonumber\\
	&\ge\frac{\binom{i+3}{4}-\frac{3i}{2}-\frac{i^2}{2}-\frac{i^2-i-4}{2}}{i}-5-(i+1)(i-7)\nonumber\\
	&=\frac{i^4 - 18i^3 + 131i^2 + 30i + 48}{24i}>0\nonumber
\end{align} for every $i\ge 9$.\end{proof}
\end{pro}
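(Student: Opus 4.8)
The plan is to build the two sequences by descending recursion on $i$, turning the expected-dimension demands (1) and (2) into a single linear Diophantine equation at each stage. First I would expand $\expdim \LL_{i,d-1}(2[s_i-s_{i-1}],2^{h_i-h_{i-1}},1^{h_{i-1}})$ from the virtual-dimension formula, abbreviating $s_i:=s(i,d,n)$, $h_i:=h(i,d,n)$ and $k:=k(n,d)$; after collecting terms, the requirement (1) that this quantity vanish reads
$$i\,h_{i-1}+s_{i-1}=(i+1)h_i+s_i-\binom{d-1+i}{i}+i+2.$$
I would initialize at the top with $s_n:=\binom{n+1}{2}$ and $h_n:=k-n-2$, the integrality of $k(n,d)$ guaranteeing $h_n\in\Z$ and a direct check showing $\expdim\LL_{n,d}(3[s_n],2^{h_n})=n$. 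The crucial bookkeeping observation is that the combination $(i+1)h_i+s_i$ on the right is exactly the total fixed at the previous step, so by Pascal's rule it telescopes to the closed form
$$a_i:=\binom{i+d-1}{i-1}-\frac{3i}{2}-\frac{i^2}{2},$$
independent of how the earlier pair split. Thus at every level the equation to solve is simply $i\,h_{i-1}+s_{i-1}=a_i$.

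The heart of the construction is then a one-line counting argument. The prescribed window for $s_{i-1}$, namely $\{\frac{i^2-3i-2}{2},\dots,\frac{i^2-i-4}{2}\}$, consists of exactly $i$ consecutive integers, so among them there is a \emph{unique} value making $a_i-s_{i-1}$ divisible by $i$; I would take $s_{i-1}$ to be that value and set $h_{i-1}:=(a_i-s_{i-1})/i\in\Z$. This single choice simultaneously produces both sequences, forces $h_{i-1}$ to be an integer, and builds condition ii) (hence also the monotonicity iv)) into the definition. Condition (2) then comes for free: since $\expdim\LL_{i,d}(3[s_i],2^{h_i})=i$ holds at each level by construction, the Castelnuovo bookkeeping of Remark~\ref{rem:castelnuovo} gives $i=\exp_i+\expdim\LL_{i-1,d}(3[s_{i-1}],2^{h_{i-1}})+1$, whence $\exp_i=0$ forces $\expdim\LL_{i-1,d}(3[s_{i-1}],2^{h_{i-1}})=i-1$.

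What remains is the verification of the side conditions i)--vi), which I would treat as independent estimates. The quick ones are iii), where $s(2,d,n)=-1$ would force $(d+1)(d+2)\equiv 1\pmod 3$, which is impossible, and v), which follows at once from the width of the $s$-window. I expect the real work, and the main obstacle, to be condition i): establishing the explicit lower bounds
$$h_n-h_{n-1}\ge\frac{d-1}{(n+2)(n+1)}\binom{n+d}{n}-3>0,\qquad h_{i+1}-h_i\ge\frac{d-1}{(i+2)(i+1)}\binom{i+d}{i}-2>0,$$
together with the companion inequalities in vi) that will later feed Lemma~\ref{lem:simple_points_hyp}. These require substituting $h_{i-1}=(a_i-s_{i-1})/i$, telescoping the $a_i$, and controlling the fractional error from the $s$-window through careful binomial algebra; the positivity is genuinely delicate at the boundary degree $d=4$, where the leading term is smallest, so that one must dispatch small $i$ by hand --- in vi) splitting into the ranges $5\le i\le 8$ and $i\ge 9$. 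I anticipate that essentially all the effort of the proof lies in this binomial bookkeeping rather than in the recursive construction itself.
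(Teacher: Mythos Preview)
Your proposal is correct and follows essentially the same route as the paper's proof: the same top-down recursion initialized at $s_n=\binom{n+1}{2}$, $h_n=k-n-2$, the same closed form $a_i=\binom{i+d-1}{i-1}-\tfrac{3i}{2}-\tfrac{i^2}{2}$ governing the linear equation $i\,h_{i-1}+s_{i-1}=a_i$, the same pigeonhole over the length-$i$ window for $s_{i-1}$, and the same verification strategy for iii), v), i), vi), including the $d=4$ case split in vi). Your emphasis on the telescoping identity $(i+1)h_i+s_i=a_{i+1}$ is a slightly cleaner way of phrasing what the paper derives step-by-step, but the content is identical.
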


\begin{remark} There is an interesting consequence of
  Proposition~\ref{inductionstep}. The sequences $h(i,d,n)$ and
  $s(i,d,n)$ do not vary with $n$, as long as $i<n$. This is crucial
  for all the computations we are going to do and opens also
  interesting generalization of our arguments that we will explore in
  the future.
\end{remark}

\begin{es}
	Here we present the computation in a specific case. Assume $n=5$ and $d=4$. By definition $k(5,4)=21\in\N$. Following the proof of Proposition \ref{inductionstep}, we set $s(5,4,5)=\binom{5}{2}=10$ and $h(5,4,5)=21-2-5=14$. Next we compute
	\begin{align}
	a(5,4) &=\binom{8}{4}-\frac{15}{2}-\frac{25}{2}=50,\nonumber\\
	a(4,4) &=\binom{7}{3}-6-8=21\nonumber\mbox{ and}\\
	a(3,4) &=\binom{6}{2}-\frac{9}{2}-\frac{9}{2}=6.\nonumber
	\end{align}
	Now $s(4,5,4)$ is the only natural number $t\in\{4,\dots,8\}$ such that $5\mid 50-t$. This means $s(4,5,4)=5$ and therefore $h(4,5,4)=\frac{50-5}{5}=9$. In the same way $s(3,5,4)$ is the only number $t\in\{1,\dots,4\}$ such that $4\mid 21-t$. Again this implies $s(3,5,4)=1$ and $h(3,5,4)=\frac{21-1}{4}=5$. Finally $s(2,5,4)$ is the only number $t\in\{0,1\}$ such that $3\mid 6-t$, so we conclude that $s(2,5,4)=0$ and $h(2,5,4)=3$.
\end{es}

From now on we fix the sequences of integers $h(i,d,n)$ and $s(i,d,n)$ of Proposition~\ref{inductionstep}. The following proves that Lemma~\ref{lem:firstdegeneration} assumption iv) is satisfied for this choice of integers.

		\begin{lem}\label{accanonbase}
			Assume $n\ge d\ge 4$ and $(n,d)\neq (4,4)$. Then
			$\LL^{H}_{i}(d)-2H_{i-1}$ is
			empty for every $i\in\{3,\dots,n\}$
			.
			\begin{proof} 
				By definition $\LL^{H}_{i}(d)-2H_{i-1}$ is
				$$\LL_{i,d-2}(1[s(i,d,n)-s(i-1,d,n)],2^{h(i,d,n)-h(i-1,d,n)}).$$
				
First assume $d=4$, $n\ge 5$. Consider $i=3$. A direct computation shows
$h(3,4,n)=5$, $h(2,4,4)=2$, therefore $h(3,4,n)-h(2,4,n)\ge 3=i$. Consider $i\in\{4,\dots,n-1\}$.
By i) in Proposition~\ref{inductionstep} we have
\[h(i,4,n)-h(i-1,4,n)\ge\frac{3}{(i+1)i}\binom{i+3}{4}-2=\frac{i^2 + 5i - 10}{8}\ge i.\]
Consider $i=n>d=4$. By i) in Proposition~\ref{inductionstep} we have
\[h(n,4,n)-h(n-1,4,n)\ge\frac{3}{(n+1)n}\binom{n+3}{4}-3=\frac{n^2+5n-18}{8}\geq n\]
for $n\ge 6$. If $n=5$ we compute $h(5,4,5)-h(4,4,5)=14-9=5=i$.
In all these cases the linear system we are interested in is contained in $\LL_{n,2}(1,2^n)$ which is empty. 				
Next we consider exceptions in Theorem~\ref{thm:AH} with $d\geq 4$. In our notation these are the cases $(i,d)=(3,6), (4,6), (4,5)$. A  direct computation shows that  $h(i,d,n)-h(i-1,d,n)$ is greater than the exceptional value of Theorem \ref{thm:AH}.

				
				
Assume then $n\ge d>4$. By hypothesis the linear system $\LL_{i,d-2}(2^{h(i,d,n)-h(i-1,d,n)})$ is nonspecial. Then by i) in Proposition~\ref{inductionstep}, for $i<n$ we have
				\begin{align}
				\virtdim\LL_{i,d-2}( & 1[s(i,d,n)-s(i-1,d,n)],2^{h(i,d,n)-h(i-1,d,n)})\nonumber\\
				&<\binom{d+i-2}{i}-(i+1)(h(i,d,n)-h(i-1,d,n))\nonumber\\	 &\le\binom{d+i-2}{i}-(i+1)\left[\frac{d-1}{(i+1)i}\binom{i+d-1}{i-1} -2\right]\nonumber\\
				& =\frac{(d+i-2)!}{i!(d-2)!}-\frac{d-1}{i}\cdot\frac{(i+d-1)!}{(i-1)!d!} +2(i+1)\nonumber\\
				& =\frac{(i+d-2)!}{i!(d-2)!}-(d-1)\cdot\frac{(i+d-1)!}{i!d!} +2(i+1)\nonumber\\
				& =\frac{(i+d-2)!}{i!(d-2)!}\left[ 1-(d-1)\cdot\frac{i+d-1}{d(d-1)}\right] +2(i+1)\nonumber\\
				& =\binom{i+d-2}{d-2}\cdot\frac{1-i}{d} +2(i+1).\nonumber
\end{align}
				Note that the latter decreases as $d$ increases. For $d=5$ we get
				\begin{align}
				\frac{1-i}{5} \binom{i+3}{3}+2(i+1)\le\frac{-i^4 - 5i^3 - 5i^2 + 65i +66}{30}\leq0\nonumber
				\end{align}
				for every $i\ge 3$. 

To conclude assume $i=n$. Then the virtual dimension is bounded as follows
				\begin{align}
				\virtdim& \LL_{n,d-2}( 1[s(n,d,n)-s(n-1,d,n)],2^{h(n,d,n)-h(n-1,d,n)})
				\nonumber\\	&
				<\frac{1-n}{d}\binom{n+d-2}{d-2} +3(n+1).\nonumber
				\end{align}
				As before it decreases as $d$ increase. For $d=5$ we have
				\begin{align}
			\frac{1-n}{5}\binom{n+3}{3} +3(n+1)&=\frac{-n^4 - 5n^3 - 5n^2 + 95n + 96}{30}<0,\nonumber
				\end{align}
				for every $n\ge 5$.
			\end{proof}
		\end{lem}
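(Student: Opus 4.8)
The plan is to first make the residual system explicit. Writing a divisor $D\in\LL^{H}_{i}(d)$ that contains $H_{i-1}$ with multiplicity at least $2$ as $D=2H_{i-1}+D'$, the residual divisor $D'$ has degree $d-2$ and carries precisely the conditions not absorbed by $2H_{i-1}$: the triple point at $q\in H_2\subseteq H_{i-1}$ drops to a simple point bearing the $s(i,d,n)-s(i-1,d,n)$ tangent directions not lying on $H_{i-1}$, the $h(i-1,d,n)$ double points on $H_{i-1}$ disappear, and the $h(i,d,n)-h(i-1,d,n)$ double points off $H_{i-1}$ survive. Hence
\[\LL^{H}_{i}(d)-2H_{i-1}=\LL_{i,d-2}(1[s(i,d,n)-s(i-1,d,n)],2^{h(i,d,n)-h(i-1,d,n)}),\]
and I would prove this is empty. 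Since imposing a simple point and its tangent directions can only lower the dimension, it suffices to show that the pure double-point system $\LL_{i,d-2}(2^{h(i,d,n)-h(i-1,d,n)})$ is empty; outside a short list of triples this system is nonspecial by Theorem~\ref{thm:AH}, so emptiness follows as soon as its virtual dimension is negative.

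The core is therefore a single numerical estimate. I would feed the lower bounds for $h(i,d,n)-h(i-1,d,n)$ from part i) of Proposition~\ref{inductionstep} into $\virtdim\LL_{i,d-2}(2^{h(i,d,n)-h(i-1,d,n)})=\binom{d+i-2}{i}-1-(i+1)\bigl(h(i,d,n)-h(i-1,d,n)\bigr)$, discarding the simple point and tangent contributions since they can only decrease it. After clearing factorials the bound collapses to
\[\virtdim\LL_{i,d-2}(2^{h(i,d,n)-h(i-1,d,n)})<\binom{i+d-2}{d-2}\cdot\frac{1-i}{d}+2(i+1)\qquad(i<n),\]
with the constant $2(i+1)$ replaced by $3(n+1)$ when $i=n$, reflecting the weaker ($-3$ rather than $-2$) estimate for $h(n,d,n)-h(n-1,d,n)$. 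Both right-hand sides are monotone decreasing in $d$, so it is enough to check negativity at the smallest relevant degree $d=5$, where they reduce to the manifestly negative quartics $\tfrac{1}{30}(-i^4-5i^3-5i^2+65i+66)$ for $i\ge 3$ and $\tfrac{1}{30}(-n^4-5n^3-5n^2+95n+96)$ for $n\ge 5$.

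Finally I would dispatch the cases escaping this range. For $d=4$ the residual system is one of quadrics, and Proposition~\ref{inductionstep} i) gives $h(i,4,n)-h(i-1,4,n)\ge i$; but $i$ general double points force a quadric to be twice the unique hyperplane through them, while the extra general simple point cannot lie on that hyperplane, so $\LL^{H}_{i}(4)-2H_{i-1}\subseteq\LL_{i,2}(1,2^{i})$ is empty. The triples for which $\LL_{i,d-2}(2^{h(i,d,n)-h(i-1,d,n)})$ lands among the Alexander--Hirschowitz exceptions, namely $(i,d)=(3,6),(4,5),(4,6)$, I would settle by verifying directly that $h(i,d,n)-h(i-1,d,n)$ exceeds the exceptional number of double points in Theorem~\ref{thm:AH}; beyond that number the system is nonspecial with negative virtual dimension, hence empty. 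I expect this last, case-by-case step to be the main obstacle, since the clean inductive estimate is unavailable there and one must invoke the exact Alexander--Hirschowitz classification by hand. The recurring subtlety throughout is keeping the $i<n$ and $i=n$ bounds separate, as the single unit by which they differ is exactly what the surrounding bookkeeping relies on.
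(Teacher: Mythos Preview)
Your proposal follows the same approach as the paper and is correct in overall structure: identify the residual system, for $d>4$ bound the virtual dimension of the double-point part using Proposition~\ref{inductionstep}~i) and reduce to $d=5$ by monotonicity, handle the Alexander--Hirschowitz exceptions by direct computation, and for $d=4$ use $\LL_{i,2}(1,2^i)=\varnothing$.

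There is one concrete gap in your $d=4$ paragraph. You assert that Proposition~\ref{inductionstep}~i) gives $h(i,4,n)-h(i-1,4,n)\ge i$, but this does not follow from that proposition at the two boundary values. For $i=3$ the bound from Proposition~\ref{inductionstep}~i) (with the index shift $j=i-1=2$) is
\[
h(3,4,n)-h(2,4,n)\ \ge\ \frac{3}{4\cdot 3}\binom{6}{2}-2=\frac{7}{4},
\]
hence only $\ge 2$ after taking integer parts; and for $i=n=5$ the top-step bound gives $\frac{n^2+5n-18}{8}=4$, not $5$. In both cases the weaker bound is genuinely insufficient for your argument: $\LL_{3,2}(1,2^2)$ and $\LL_{5,2}(1,2^4)$ are \emph{not} empty (they contain reducible quadrics with one component the hyperplane through $q$ and the double points), so the simple point alone does not kill them. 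The paper closes these two cases by computing directly $h(3,4,n)=5$, $h(2,4,n)=2$ and $h(5,4,5)-h(4,4,5)=14-9=5$, which you should do as well. After this patch your $d=4$ argument coincides with the paper's, and the rest of your write-up matches the paper's proof essentially line for line, including the separation of the $i<n$ and $i=n$ estimates and the monotonicity reduction to $d=5$.
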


We are in the condition to state and prove the induction step we described.
\begin{pro}
  \label{pro:induction_step_tuned}
Assume $n\ge d\ge 4$ and $(n,d)\neq (4,4)$. Let $i\in\{3,\dots,n\}$. Suppose that
\begin{itemize}
\item[a)] $\LL^H_{i-1}(d)$ is nonspecial
\item[b)] there is at most one divisor $D\in
  \LL^H_{i-1}(d)$ with $\mult_q D>3$
\end{itemize}
 then
$\LL^H_{i}(d)$ is nonspecial and there is at
most one divisor $D\in \LL^H_{i}(d)$ with $\mult_q D>3$.
\end{pro}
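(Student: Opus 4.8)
The plan is to deduce the statement from Lemma~\ref{lem:firstdegeneration}, whose conclusion (3) is exactly the nonspeciality of $\LL^H_i(d)$, supplemented by a direct argument controlling high multiplicity at $q$. The relevant specialization and the Castelnuovo sequence of Remark~\ref{rem:castelnuovo} are already built into Definition~\ref{dfn:induction}, since $Z_i$ carries $h(i-1,d,n)$ of its double points and $s(i-1,d,n)$ of its tangent directions on $H_{i-1}$. So the bulk of the work is to check hypotheses i)--vii) of Lemma~\ref{lem:firstdegeneration} for the sequences fixed in Proposition~\ref{inductionstep}. Hypothesis i) is part (2) of Proposition~\ref{inductionstep} at level $i$, and iii) is part (1) at level $i$. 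Hypothesis ii) follows from assumption a): nonspeciality of $\LL^H_{i-1}(d)$ together with $\expdim\LL^H_{i-1}(d)=i-1$ (Proposition~\ref{inductionstep} (2)) gives $\dim\LL^H_{i-1}(d)=i-1$. Hypothesis iv) is Lemma~\ref{accanonbase}, which applies since $(n,d)\neq(4,4)$ and in fact gives emptiness; hypothesis vii) is condition vi) of Proposition~\ref{inductionstep}. Finally vi) is vacuous for $i=3$, trivial for $i=4$ (there $\Pi=H_3$ and $\LL^H_3(d)=\LL_{3,d}(3[s(3,d,n)],2^{h(3,d,n)})$ by definition), and for $i>4$ it is conclusion (4) of Lemma~\ref{lem:firstdegeneration} produced at the previous inductive level.

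The one hypothesis that is not purely numerical is v), which coincides with the second assertion at level $i$; I would establish it directly from assumption b). Every divisor of $\LL^H_i(d)$ with $\mult_q D>3$ has $\mult_q D\geq 4$, and since a multiplicity-$4$ point kills the whole degree-$3$ jet at $q$ the tangent-direction conditions are automatically met. Hence these divisors form the linear subsystem
$$M_i:=\LL_{i,d}(4,2^{h(i,d,n)})\subseteq\LL^H_i(d),$$
and the claim is precisely $\dim M_i\le 0$. Restricting to $H_{i-1}$ gives the Castelnuovo sequence
$$0\to M_i-H_{i-1}\to M_i\to (M_i)_{|H_{i-1}}\to 0,$$
where $M_i-H_{i-1}=\LL_{i,d-1}(3,2^{h(i,d,n)-h(i-1,d,n)},1^{h(i-1,d,n)})$ (the multiplicity at $q$ drops from $4$ to $3$ and the $h(i-1,d,n)$ double points on $H_{i-1}$ become simple) and $(M_i)_{|H_{i-1}}\subseteq\LL_{i-1,d}(4,2^{h(i-1,d,n)})=M_{i-1}$. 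By assumption b), $\dim M_{i-1}\le 0$, so $h^0((M_i)_{|H_{i-1}})\le 1$; if the kernel $M_i-H_{i-1}$ is empty, then $h^0(M_i)\le 1$, i.e. $\dim M_i\le 0$, which is v). (If one prefers to read this on the flat limit, semicontinuity transfers the bound to $M_i$ itself.)

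Thus the crux, and the step I expect to be the main obstacle, is the emptiness of the kernel $\LL_{i,d-1}(3,2^{h(i,d,n)-h(i-1,d,n)},1^{h(i-1,d,n)})$. Its virtual dimension is negative: using $\exp_i=0$ (part (1) of Proposition~\ref{inductionstep}) one rewrites it as $(i+1)+\bigl(s(i,d,n)-s(i-1,d,n)\bigr)-\binom{i+2}{2}$, which is strictly negative by condition v) of Proposition~\ref{inductionstep}. To pass from negative virtual dimension to actual emptiness I would first discard the simple points and invoke Proposition~\ref{pro:3piudoppinonspeciale} to get that $\LL_{i,d-1}(3,2^{h(i,d,n)-h(i-1,d,n)})$ is nonspecial; here one must verify the bound $h(i,d,n)-h(i-1,d,n)\le r(i,d-1)$, which is exactly the growth estimate i) of Proposition~\ref{inductionstep}. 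Then, since the $h(i-1,d,n)$ simple points are general on $H_{i-1}$, Lemma~\ref{indpdti} shows they impose independent conditions, pushing the dimension below zero and forcing emptiness; the low degrees $d=4,5$ and the Alexander--Hirschowitz exceptions would have to be inspected by hand, as in the proof of Proposition~\ref{pro:3piudoppinonspeciale}. With v) secured and the remaining hypotheses verified, Lemma~\ref{lem:firstdegeneration} yields that $\LL^H_i(d)$ is nonspecial, while the bound $\dim M_i\le 0$ is the second assertion, completing the induction step.
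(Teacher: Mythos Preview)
Your approach is essentially the paper's: verify hypotheses i)--vii) of Lemma~\ref{lem:firstdegeneration} via Proposition~\ref{inductionstep} and Lemma~\ref{accanonbase}, then for v) run a Castelnuovo argument on the sublinear system $M_i$ of divisors with $\mult_q\ge 4$ and reduce to the emptiness of the kernel $\LL_{i,d-1}(3,2^{h(i,d,n)-h(i-1,d,n)},1^{h(i-1,d,n)})$, whose negative virtual dimension you compute correctly and whose nonspeciality you extract from Proposition~\ref{pro:3piudoppinonspeciale}. Your treatment of vi) is slightly more explicit than the paper's, which is fine.

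There is one small but genuine slip in the last step. You write that ``the $h(i-1,d,n)$ simple points are general on $H_{i-1}$'' and invoke Lemma~\ref{indpdti} directly. They are not: by Definition~\ref{dfn:induction} the scheme $Z_i$ carries $h(3,d,n)$ of those points on $\Pi=H_3\subset H_{i-1}$ (indeed $h(j,d,n)$ of them on each $H_j$), so Lemma~\ref{indpdti} alone does not apply. The paper handles this by first noting that $\LL_{i,d-1}(3,2^{h(i,d,n)-h(i-1,d,n)})-H_{i-1}$ is empty (a consequence of Lemma~\ref{accanonbase}), and then combining Lemma~\ref{lem:simple_points_hyp} (for the $h(3,d,n)$ points on $\Pi$, using the bound vi) of Proposition~\ref{inductionstep}) with Lemma~\ref{indpdti} (for the remaining points, which \emph{are} general on $H_{i-1}$). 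With that correction your argument coincides with the paper's.
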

\begin{proof} Recall $q\in H_{i-1}$. First we  check that the
  conditions i), ii), iii), iv), vii) in
  Lemma~\ref{lem:firstdegeneration} are satisfied. 
Point i) is (2) in Proposition~\ref{inductionstep}, ii) is a) and (2) in Proposition~\ref{inductionstep}, 
iii) is
(1) in Proposition~\ref{inductionstep}, iv) is
Lemma~\ref{accanonbase}, and vii) is vi) in Proposition~\ref{inductionstep}.

We are left to prove that there is at most  one divisor $D\in
\LL^{H}_{i}(d)$ with $\mult_qD>3$.
All divisors $D\in \LL_{i}^H(d)$ with $\mult_qD>3$ either contain
$H_{i-1}$ or restricts to
divisors $D_{|H_{i-1}}\in\LL_{i-1}^H(d)$ with
$\mult_qD_{|H_{i-1}}>3$. On the other hand, by assumption b), if there is a
pencil of these divisors the unique divisor in $\LL_{i}^H(d)-H_{i-1}$
has multiplicity at least $4$ in $q$.
Therefore to conclude it is enough to prove that $\mult_qD=3$ for the divisor $D$ with $D\supset
H_{i-1}$. 
The divisor  
$D-H_{i-1}$ is in
$$\LL_{i,d-1}(2[s(i,d,n)-s(i-1,d,n)],2^{h(i,d,n)-h(i-1,d,n)},1^{h(i-1,d,n)}). $$
A straightforward computation shows that
$$ h(i,d,n)-h(i-1,d,n)<\left\lceil\frac{{i+d-1\choose i}}{i+1}\right\rceil -i-1,$$
therefore by Proposition~\ref{pro:3piudoppinonspeciale} the linear system $\LL_{i,d-1}(3,2^{h(i,d,n)-h(i-1,d,n)})$ is nonspecial.
By Lemma~\ref{accanonbase}
$\LL_{i,d-1}(2[s(i,d,n)-s(i-1,d,n)],2^{h(i,d,n)-h(i-1,d,n)})-H_{i-1}$is
empty and so is
$\LL_{i,d-1}(3,2^{h(i,d,n)-h(i-1,d,n)})-H_{i-1}$.

 Hence, arguing as in
Proposition~\ref{lem:firstdegeneration}, we use 
Lemma~\ref{lem:simple_points_hyp} and Lemma~\ref{indpdti} to ensure that the simple points on the hyperplane impose
independent conditions and the linear system $\LL_{i,d-1}(3,2^{h(i,d,n)-h(i-1,d,n)},1^{h(i-1,d,n)})$ is nonspecial. 
Point (1) in Proposition~\ref{inductionstep} gives
$$\expdim
\LL_{i,d-1}(2[s(i,d,n)-s(i-1,d,n)],2^{h(i,d,n)-h(i-1,d,n)},1^{h(i-1,d,n)})=0.$$
Point v) in  Proposition~\ref{inductionstep} gives
$s(i,d,n)-s(i-1,d,n)<{i+1\choose2}$, then  
\begin{eqnarray*}
  \virtdim\LL_{i,d-1}(3,2^{h(i,d,n)-h(i-1,d,n)},1^{h(i-1,d,n)})<\\
<\virtdim\LL_{i,d-1}(2[s(i,d,n)-s(i-1,d,n)],2^{h(i,d,n)-h(i-1,d,n)},1^{h(i-1,d,n)})=0.\end{eqnarray*}
Since $\LL_{i,d-1}(3,2^{h(i,d,n)-h(i-1,d,n)},1^{h(i-1,d,n)})$ is nonspecial it is empty and any divisor in $\LL_{i,d-1}(2[s(i,d,n)-s(i-1,d,n)],2^{h(i,d,n)-h(i-1,d,n)},1^{h(i-1,d,n)})$ has a double point in $q$.
\end{proof} 

The next proposition proves the first step of our induction argument.

\begin{pro}
  \label{pro:step1_induction} Assume $n\ge d\ge 4$ and $(n,d)\neq (4,4)$. Then the linear system
  $\LL_{2,d}(3[s(2,d,n)],2^{h(2,d,n)})$ is nonspecial and there is at most
  one divisor $D\in \LL_{2,d}(3[s(2,d,n)],2^{h(2,d,n)})$ with $\mult_q D>3$.
 \end{pro}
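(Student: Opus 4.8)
The goal is to establish nonspeciality of a planar linear system $\LL_{2,d}(3[s(2,d,n)],2^{h(2,d,n)})$—a system of degree-$d$ plane curves with one triple point carrying $s(2,d,n)$ prescribed tangent directions and $h(2,d,n)$ general double points—together with a control statement that at most one member has multiplicity exceeding $3$ at $q$. Since this is the base case of the induction feeding into Proposition~\ref{pro:induction_step_tuned}, I would verify directly that the numerology set up in Proposition~\ref{inductionstep} lands us inside a range where planar speciality is understood. The key external input available is the Ciliberto--Miranda theorem (the case of Harbourne--Hirschowitz proved in \cite{CM}) for systems $\LL_{2,d}(n,m^h)$ with $m\le 3$: since here the non-double multiplicities are exactly a single triple point, this result applies to $\LL_{2,d}(3,2^{h(2,d,n)})$.

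**The plan.** First I would record the expected dimension from Proposition~\ref{inductionstep}(2), namely $\expdim\LL_{2,d}(3[s(2,d,n)],2^{h(2,d,n)})=2$, and note $s(2,d,n)\ge 0$ from iii). The strategy is to separate the role of the tangent directions from that of the double points. I would first apply the Harbourne--Hirschowitz result (via \cite{CM}) to the system $\LL_{2,d}(3,2^{h(2,d,n)})$ \emph{without} tangent directions, checking through Definition~\ref{dfn:(-1)_special} that the configuration is not $(-1)$-special—this requires verifying that no $(-1)$-curve appears with coefficient greater than one in the decomposition, which for a single triple point and general double points amounts to a bounded check once the numerology fixes $h(2,d,n)$. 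Then I would impose the $s(2,d,n)$ tangent directions one at a time, using Remark~\ref{rem:positive_sectional_genus}-style independence, or more directly Lemma~\ref{indpdti}, to argue that each tangent direction at the general triple point imposes an independent condition as long as the residual system stays nonempty, which is guaranteed because the virtual dimension stays nonnegative by v) in Proposition~\ref{inductionstep} bounding $s(2,d,n)<\binom{3}{2}=3$.

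**The multiplicity-four control.** For the second assertion, any divisor $D$ with $\mult_q D>3$ has a quadruple point at $q$, so it lies in $\LL_{2,d}(4,2^{h(2,d,n)})$. I would estimate the dimension of this system, again through \cite{CM} (the multiplicity four exceeds $3$, so I would instead compare against $\LL_{2,d}(4,2^{h(2,d,n)})$ directly by a dimension count or by absorbing the fourth-order condition into the triple point). The expected dimension drops by the length difference between a $4$-fold and $3$-fold point, i.e. by $\binom{4+1}{2}-\binom{3+1}{2}=10-6=4$, against only $s(2,d,n)\le 2$ tangent directions released; this forces the locus of divisors with $\mult_q>3$ to be at most a single point, giving the ``at most one divisor'' claim.

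**Main obstacle.** The genuine difficulty is the low-degree and small-$n$ exceptions. The statement excludes $(n,d)=(4,4)$, but for $d=4,5$ the planar systems can brush against the Alexander--Hirschowitz exceptional cases listed in Theorem~\ref{thm:AH}, and more seriously against the $(-1)$-special configurations of Conjecture~\ref{con:HH}. I expect the hardest part to be a finite but delicate case-by-case check, for small $d$, that $\LL_{2,d}(3,2^{h(2,d,n)})$ with the specific $h(2,d,n)$ computed from Proposition~\ref{inductionstep} is not $(-1)$-reducible with a repeated $(-1)$-curve—in other words, ruling out that the triple point together with the double points forces an unexpected conic or low-degree curve to split off with multiplicity. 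These sporadic verifications, rather than the general mechanism, are where the real work lies.
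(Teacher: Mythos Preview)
Your approach is essentially the paper's: apply \cite{CM} to $\LL_{2,d}(3,2^{h(2,d,n)})$ for nonspeciality, then impose the tangent directions via Lemma~\ref{indpdti}, and control the multiplicity-4 locus by looking at $\LL_{2,d}(4,2^{h(2,d,n)})$. Two corrections are worth making.

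First, you misread the scope of the Ciliberto--Miranda result. The statement that Conjecture~\ref{con:HH} holds for $\LL_{2,d}(n,m^h)$ with $m\le 3$ constrains the \emph{repeated} multiplicity $m$, not the single multiplicity $n$. Hence \cite{CM} applies directly to $\LL_{2,d}(4,2^{h(2,d,n)})$ as well, and you need no workaround. The paper simply checks the list in \cite[Theorem~7.1]{CM} to conclude that both $\LL_{2,d}(3,2^{h(2,d,n)})$ and $\LL_{2,d}(4,2^{h(2,d,n)})$ are nonspecial for $d\ge 5$; there is no delicate case-by-case $(-1)$-curve analysis.

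Second, your virtual-dimension argument for the multiplicity-4 control is incomplete precisely at $d=4$. There $h(2,4,n)=2$ and $s(2,4,n)=0$, and $\LL_{2,4}(4,2^2)$ has virtual dimension $-2$ but is $(-1)$-special: the unique element is $2\ell_1+2\ell_2$ with $\ell_i=\langle q,p_i\rangle$, so the actual dimension is $0$. A bare comparison of virtual dimensions does not give ``at most one divisor'' here; you need either the \cite{CM} classification or this direct computation. The paper does the latter for $d=4$ and uses nonspeciality (hence emptiness, since the virtual dimension is negative) for $d\ge 5$. Note also that $s(2,d,n)\in\{0,1\}$ always, with $s(2,d,n)=1$ only for $d\ge 5$, so the tangent-direction step is immediate once $\LL_{2,d}(4,2^{h(2,d,n)})$ is shown empty.
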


		\begin{proof} A simple check of the list in \cite[Theorem 7.1]{CM} shows that the linear systems $\LL_{2,d}(3,2^{h(2,d,n)})$ and $\LL_{2,d}(4,2^{h(2,d,n)})$ are nonspecial for $d\geq 5$. While a direct computation shows that 
$\LL_{2,4}(3,2^{h(2,4,n)})$ is nonspecial and $\dim\LL_{2,4}(4,2^{h(2,4,n)})=0$. In particular $\LL_{2,d}(4,2^{h(2,d,n)})$ is empty for $d\geq 5$ and has dimension 0 for $d=4$.

We are left to study the $s(2,d,n)$ tangent direction. 
If $s(2,d,n)=0$, we are done. Suppose $s(2,d,n)=1$. This is possible only
for $d\ge 5$. Since $\LL_{2,d}(4,2^{h(2,d,n)})$ is empty  and
$\LL_{2,d}(3,2^{h(2,d,n)})$ is of
positive dimension we conclude, by Lemma~\ref{indpdti}, that  $\LL_{2,d}(3[s(2,d,n)],2^{h(2,d,n)})$ is nonspecial.
Moreover for $d\geq 5$ any divisor in $\LL_{2,d}(3,2^{h(2,d,n)})$ has
multiplicity $3$ in $q$, while there is a unique divisor with
multiplicity $4$ in $\LL_{2,4}(3,2^{h(2,4,n)})$.
		\end{proof}

We are ready to state the nonspeciality result we were looking for.
\begin{pro}
  \label{pro:induction} If $n\ge d\ge 4$ and $(n,d)\neq (4,4)$, then
  \begin{itemize}
  \item[i)] the linear system
  $\LL^H_{n}(d)$ and $\LL_n(d)$  are nonspecial,
\item[ii)]  there is at most
  one divisor $D\in \LL^H_{n}(d)$ with $\mult_q D>3$.
  \end{itemize}
\end{pro}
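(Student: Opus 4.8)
The plan is to prove Proposition~\ref{pro:induction} by a straightforward induction on the dimension $i$, running from $i=2$ up to $i=n$, and then deducing the statement for $\LL_n(d)$ from its specialization $\LL^H_n(d)$. The whole point of the preceding machinery is that the hard work has already been isolated: Proposition~\ref{pro:step1_induction} furnishes the base case $i=2$, and Proposition~\ref{pro:induction_step_tuned} is precisely the induction step, packaged so that its hypotheses (a) and (b) match the two conclusions we want to propagate. So first I would state explicitly that we proceed by induction on $i\in\{2,\dots,n\}$, with the inductive hypothesis being that $\LL^H_{i-1}(d)$ is nonspecial and that there is at most one divisor $D\in\LL^H_{i-1}(d)$ with $\mult_q D>3$.

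Next I would verify the base of the induction. For $i=2$ this is exactly Proposition~\ref{pro:step1_induction}: under the standing assumptions $n\ge d\ge 4$ and $(n,d)\neq(4,4)$, the linear system $\LL_{2,d}(3[s(2,d,n)],2^{h(2,d,n)})=\LL^H_2(d)$ is nonspecial and carries at most one divisor with $\mult_q D>3$. With the base case in hand, the induction step is immediate: fix $i\in\{3,\dots,n\}$ and assume the hypothesis holds for $i-1$; then the two bullet hypotheses a) and b) of Proposition~\ref{pro:induction_step_tuned} are met, and that proposition yields directly that $\LL^H_i(d)$ is nonspecial and has at most one divisor $D$ with $\mult_q D>3$. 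Iterating up to $i=n$ gives both conclusions for $\LL^H_n(d)$, which is exactly statement ii) together with half of statement i).

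Finally I would close the gap between the specialized system $\LL^H_n(d)$ and the original system $\LL_n(d)$. By construction $\LL^H_n(d)$ is a specialization of $\LL_n(d)$ obtained by moving $h(n-1,d,n)$ double points and $s(n-1,d,n)$ tangent directions onto the general hyperplane $H\supset\Pi$, as described before Definition~\ref{dfn:induction} and governed by Construction~\ref{cons:notationlinearsystem}. The semicontinuity inequality recorded there gives $\h^0(\LL^H_n(d))\ge\h^0(\LL_n(d))$, and since the virtual (hence expected) dimension is a purely numerical invariant unchanged by specialization, the nonspeciality of $\LL^H_n(d)$ forces $\dim\LL_n(d)=\expdim\LL_n(d)$ as well; thus $\LL_n(d)$ is nonspecial too, completing i).

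The step I expect to require the most care is not any new calculation but checking that the hypotheses of the two invoked propositions are genuinely satisfied at each stage, i.e. confirming that the chosen sequences $h(i,d,n)$ and $s(i,d,n)$ from Proposition~\ref{inductionstep} feed correctly into Proposition~\ref{pro:induction_step_tuned} across the full range $3\le i\le n$, including the excluded and borderline cases. All of the genuine numerology and the control of the base locus has been absorbed into Proposition~\ref{inductionstep}, Lemma~\ref{accanonbase}, and Proposition~\ref{pro:3piudoppinonspeciale}, so at the level of this proposition the argument is essentially a clean transcription of the induction; the obstacle is bookkeeping, namely ensuring the exclusion $(n,d)\neq(4,4)$ and the hypothesis $n\ge d\ge 4$ are preserved and that no degenerate small case slips through.
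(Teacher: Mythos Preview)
Your proposal is correct and follows essentially the same approach as the paper: induct on $i$ from $2$ to $n$, using Proposition~\ref{pro:step1_induction} as the base case and Proposition~\ref{pro:induction_step_tuned} as the inductive step, then deduce nonspeciality of $\LL_n(d)$ from that of $\LL^H_n(d)$ by the semicontinuity already recorded in Construction~\ref{cons:notationlinearsystem}. Your write-up is more detailed than the paper's three-line version, but the logic is identical.
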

\begin{proof} By Proposition~\ref{pro:step1_induction}
  $\LL_{2,d}(3[s(2,d,n)],2^{h(2,d,n)})$ satisfies i) and ii). Then by
  Proposition~\ref{pro:induction_step_tuned} $\LL_n^H(d)$ satisfies i)
  and ii). We already observed that $\LL_n(d)$ is nonspecial if
  $\LL_n^H(d)$ is nonspecial.
\end{proof}

\section{The genus bound}\label{sec:plane}

In this section we aim to bound from below the sectional genus of linear systems $\LL_n^H(d)$. We start from $\LL_{2,d}(3[s(2,d,n)],2^{h(2,d,n)})$.
\begin{pro}
  \label{pro:degp2}
The sectional genus of $\LL_{2,d}(3[s(2,d,n)],2^{h(2,d,n)})$ is
$0$ for $d=4,5$ and it is positive for $d\geq 6$. 
\end{pro}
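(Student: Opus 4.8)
The plan is to read the sectional genus as the geometric genus of a general member $C\in\LL_{2,d}(3[s],2^h)$, writing $s:=s(2,d,n)$ and $h:=h(2,d,n)$, and to compute it by adjunction after blowing up. Recall from Proposition~\ref{inductionstep} (see (\ref{eq:hsa}) for $i=3$) that $s\in\{0,1\}$, that $3h+s=\binom{d+2}{2}-9$, and that $s,h$ do not depend on $n$. First I would blow up $\p^2$ at $q$ and at the $h$ double points, $\nu\colon\tilde{\p}^2\to\p^2$, with exceptional divisors $E_0$ over $q$ and $E_1,\dots,E_h$, so that the strict transform $\tilde C$ has class $dH-3E_0-2\sum_{i=1}^hE_i$. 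When $s=1$ the prescribed tangent direction only forces $\tilde C$ through one prescribed point of $E_0$; since $\tilde C\cdot E_0=3$ the general member still meets $E_0$ transversally in three points, so the triple point at $q$ stays ordinary and the genus is unaffected. Adjunction then gives
\[
p_a(\tilde C)=1+\tfrac12\,\tilde C\cdot(\tilde C+K_{\tilde{\p}^2})=\binom{d-1}{2}-3-h,
\]
and substituting $h=\tfrac13\big(\binom{d+2}{2}-9-s\big)$ turns this into $\tfrac13(d^2-6d+2+s)$. Since $d^2-6d+2=(d-3)^2-7$, this integer equals $-2,-1$ for $d=4,5$ (where $s=0$) and is at least $1$ for $d\ge6$.

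For $d=4,5$ the virtual genus $p_a(\tilde C)$ is negative, so the general member is reducible and I would exhibit its fixed components directly. For $d=5$ the conic class $2H-E_0-\sum_{i=1}^4E_i$ through $q$ and the four nodes satisfies $\tilde C\cdot(2H-E_0-\sum_{i=1}^4E_i)=-1$, hence splits off; the residual class $3H-2E_0-\sum_{i=1}^4E_i$ is a nodal (so rational) cubic and the conic is rational, giving geometric genus $0$. For $d=4$ the two lines $H-E_0-E_i$ ($i=1,2$) each meet $\tilde C$ in $-1$ and split off, leaving the conic $2H-E_0-E_1-E_2$; all three components are rational, so again the genus is $0$.

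The heart of the proof is $d\ge6$, where I must promote the adjunction bound to an equality by showing that the general member is irreducible with only an ordinary triple point at $q$ and $h$ ordinary nodes, smooth elsewhere, so that $\tilde C$ is smooth and its geometric genus equals $p_a(\tilde C)>0$. By Proposition~\ref{pro:step1_induction} the net $\LL_{2,d}(3[s],2^h)$ is nonspecial of dimension $2$ and its general member has multiplicity exactly $3$ at $q$. A fixed component would be an irreducible curve of negative self-intersection, hence, the points being general, a $(-1)$-curve $\Gamma$ with $\tilde C\cdot\Gamma<0$; running over the relevant classes every intersection number is nonnegative for $d\ge6$, for instance $\tilde C\cdot(H-E_0-E_i)=d-5$, $\tilde C\cdot(H-E_i-E_j)=d-4$, $\tilde C\cdot(2H-E_0-\sum_{i=1}^4E_i)=2d-11$, and $\tilde C\cdot(3H-2E_0-\sum_{i=1}^6E_i)=3(d-6)$, so there is no fixed component. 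Moreover the net cannot be composed with a pencil, since that would force $\tilde C=2\tilde Q$ with $\tilde Q$ integral, which is impossible because the coefficient of $E_0$ is the odd number $3$. By Bertini's theorems the general member is therefore irreducible and smooth away from the base points, while generality of the points makes the imposed multiplicities ordinary; hence $\tilde C$ is smooth and the geometric genus equals $\tfrac13(d^2-6d+2+s)>0$. I expect this last step, namely the $(-1)$-curve bookkeeping guaranteeing no fixed component and the verification that the base points contribute exactly ordinary singularities, to be the main obstacle, and it is precisely where the threshold $d\ge6$ enters: for $d=4,5$ genuine fixed curves appear and pull the genus down to $0$.
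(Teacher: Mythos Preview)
Your adjunction computation and the handling of $d=4,5$ are fine and match the paper's conclusion. The genuine gap is in your treatment of $d\ge 6$, where two steps are not justified.

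First, the sentence ``a fixed component would be an irreducible curve of negative self-intersection, hence, the points being general, a $(-1)$-curve $\Gamma$ with $\tilde C\cdot\Gamma<0$'' is doing a lot of unproved work. Neither implication is automatic: a fixed component need not have $\tilde C\cdot\Gamma<0$, and the assertion that every irreducible negative curve on the blow-up of $\p^2$ at general points is a $(-1)$-curve is precisely the SHGH conjecture, which is open once you blow up $10$ or more points. For $d=7$ you already have $h=9$, hence $10$ blown-up points and infinitely many $(-1)$-classes, so ``running over the relevant classes'' is not a finite check and the four families you list do not suffice. Second, ``generality of the points makes the imposed multiplicities ordinary'' (so that $\tilde C$ is smooth and $p_g=p_a$) is asserted, not proved; nonspeciality alone does not give this.

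The paper avoids both issues by arguing differently. For $d\ge 8$ it proves irreducibility by an explicit case analysis of possible splittings $D=D_1+D_2$ (checking that the relevant sub-systems are empty), and then, rather than showing $\tilde C$ is smooth, it allows arbitrary extra (possibly infinitely near) singular points of multiplicities $m_i$ and $t$ extra base points and combines the genus formula with the degree inequality $\deg\f_{2,d}\ge 1$ to force $l=t=0$; the positivity of the genus then follows from your same expression $\tfrac13(d^2-6d+2+s)$. For $d=6,7$ the paper does \emph{not} prove irreducibility at all: it simply exhibits a reducible member $C_1+C_2$ of the linear system with one component of positive genus (a smooth plane cubic), and invokes Chow's semicontinuity (Remark~\ref{rem:positive_sectional_genus}) to conclude that the general member has positive genus. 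This sidesteps entirely the $(-1)$-curve bookkeeping that breaks down in your approach.
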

\begin{proof}
 The general element in  $\LL_{2,d}(3[s(2,d,n)],2^{h(2,d,n)})$ has a
 triple point in $q$, by Proposition~\ref{pro:step1_induction}, and
 double points at the rest of the assigned points by \cite[Theorem 8.1]{CM}. 
\begin{claim}
 If $d\geq 8$ the general element $D\in\LL_{2,d}(3[s(2,d,n)],2^{h(2,d,n)})$ is irreducible.
\end{claim}
\begin{proof}[Proof of the Claim] Assume that the general element
  $D\in\LL_{2,d}(3[s(2,d,n)],2^{h(2,d,n)})$ is reducible. Then
  $D=D_1+D_2$, set 
$$d-1\geq b_i=\deg D_i>0$$
 and, by monodromy, $m_i:=\mul_{p_j}D_i$. Then $m_1+m_2=2$ so, up to order, there are two possibilities: either $m_1=0$ and $m_2=2$, or $m_1=m_2=1$. 

Assume first that $m_2=2$, that is we are assuming that $\LL_{2,b_2}(2^{h(2,d,n)})$ is not empty. We are assuming  $d\geq 8$, then $h(2,d,n)\geq 12$. Therefore,
by Theorem \ref{thm:AH}, this linear system is nonspecial and its virtual dimension
is 
		\[\binom{b_2+2}{2}-3h(2,d,n)-1=\binom{b_2+2}{2}-a(3,d)+s(2,d,n)-1.\]
A straightforward computation shows that this is non negative only for $d<8$.
  
Assume  $m_1=m_2=1$. Then there are plane curves $D_i$ of degree $b_i$ through $p_1,\dots,p_{h(2,d,n)}$, that is $D_i\in\LL_{2,b_i}(1^{h(2,d,n)})$. We may assume $b_1\leq b_2$, then the
virtual dimension of $\LL_{2,b_i}(1^{h(2,d,n)})$ is 
	\[\binom{b_i+2}{2}-h(2,d,n)-1,\]
and this is non negative only if 
		\[3b_i^2+9b_i\geq d^2+3d-22.\]
Therefore
		\[3b_1^2+9b_1\geq b_1^2+b_2^2+2b_1b_2+3b_1+3b_2-22\]
		\begin{equation}\label{eq:disuguaglianza_b}
		2b_1^2+6b_1\geq b_2^2+2b_1b_2+3b_2-22\ge b_1^2+2b_1^2+3b_1-22
		\end{equation}
		\[b_1^2-3b_1-22\leq 0\]
		so $b_1\le 6$. 
But then by (\ref{eq:disuguaglianza_b}) we have
		\[2b_1^2+6b_1\geq b_2^2+2b_1b_2+3b_2-22\ge b_2^2+2b_1^2+3b_2-22\]
		\[6b_1\geq b_2^2+3b_2-22\ge b_2^2+3b_1-22\]
		\[3b_1\geq b_2^2-22\]
		\[b_2^2\leq 3b_1+22\le 18+22=40\]
and we conclude $ b_2\le 6$.

Since $h(2,8,n)=12$ we have $b_i\geq 4$. Therefore the only possibilities are $b_1=4$ and $10\geq d\geq 8$, $b_1=5$ and $11\geq d\geq 10$ or $b_1=6$ and $d=12$.
A simple computation gives: $h(2,8,n)=12$, $h(2,9,n)=15$, $h(2,10,n)=19$, $h(2,11,n)=23$ and $h(2,12,n)=27$.
The divisor $D$ has a triple point  at $q$. Therefore the divisor $D_1$ has to
belong to one of the following linear systems:
\begin{itemize}
\item[\ $d=8$] $\LL_{2,4}(2,1^{12})$,
\item[\ $d=9$] $\LL_{2,4}(1^{15})$,
\item[$d=10$] either $\LL_{2,4}(1^{19})$ or $\LL_{2,5}(2,1^{19})$,
\item[$d=11$] $\LL_{2,5}(1^{23})$,
\item[$d=12$]  $\LL_{2,6}(2,1^{27})$.
\end{itemize}
An easy check shows that they are all empty.
\end{proof}

Set $d\geq 8$. Since the general element is irreducible the map
$\f_{2,d}$ is dominant.  
Assume that the geometric genus of $D$ is $0$ and, beside the singularities we imposed, there are  $l$ singular points of multiplicity $m_i$ and $t$ simple base points. Then we get
	\[0=\frac{(d-1)(d-2)}{2}-3-h(2,d,n)-\sum_{i=1}^{l}\frac{m_i(m_i-1)}{2}\]
				and
	\[1\leq d^2-9-4h(2,d,n)-s(2,d,n)-t-\sum_{i=1}^{l}m_i^2.\]
	This yields		
\begin{equation}
0\leq d^2-10-4h(2,d,n)-s(2,d,n)-t-\sum_{i=1}^{l}m_i^2-\left(\frac{(d-1)(d-2)}{2}-3-h(2,d,n)-\sum_{i=1}^{l}\frac{m_i(m_i-1)}{2}\right)\nonumber
\end{equation}
and, recall Equation~(\ref{eq:hsa}),
\begin{align}
	0 & \leq\frac{d^2+3d}{2}-8-3h(2,d,n)-s(2,d,n)-t-\sum_{i=1}^{l}\frac{m_i(m_i+1)}{2}\nonumber\\
	& =\frac{d^2+3d}{2}-8-a(2,d)-t-\sum_{i=1}^{l}\frac{m_i(m_i+1)}{2}\nonumber\\
				& =-t-\sum_{i=1}^{l}\frac{m_i(m_i+1)}{2},\nonumber
				\end{align}
therefore $l=0$ and $t=0$.
		
To conclude we compute the genus
		\begin{align}
		0 & =2g=(d-1)(d-2)-6-2h(2,d,n)=d^2-3d-4-2h(2,d,n)\nonumber\\
		& \ge d^2-3d-4-\frac{2}{3}a(2,d)
		=\frac{2}{3}(d^2 - 6d + 2).\nonumber
		\end{align}
This contradicts the assumption $d\geq 8$.

Assume $d=7$, then $h(2,7,n)=9$, $s(2,7,n)=0$. Let $C_1\in\LL_{2,3}(1,1^8)$ be a cubic through $q$ and $\{p_1,\ldots,p_8\}$, and $C_2\in\LL_{2,4}(2,1^7,2)$, a quartic singular in $q$ and $p_9$ and passing through $\{p_1,\ldots,p_8\}$. Then $D=C_1+C_2$ is a reducible element  of positive genus in $\LL_{2,7}(3,2^{9})$ and we conclude by Remark~\ref{rem:positive_sectional_genus}. 

Assume $d=6$, then $h(2,6,n)=6$, $s(2,6,n)=1$. Let 
$C_1\in \LL_{2,3}(2,1^6)$ and $C_2\in\LL_{2,3}(1[1],1^6)$ be
curves. Then  $D=C_1+C_2$ is a reducible element in
$\LL_{2,6}(3[1],2^{6})$. The curve $C_2$ is not rational therefore the sectional genus of $\LL_{2,6}(3[1],2^{6})$ is positive by Remark~\ref{rem:positive_sectional_genus}.

  For $d=4$ and $d=5$ it is an easy computation to see that the movable part of $\LL_{2,d}(3[s(2,d,n)],2^{h(2,d,n)})$ is given respectively by $\LL_{2,2}(1^3)$ and $\LL_{2,3}(2,1^4)$, which have genus 0.
			\end{proof}

For $d=4,5$ we bound the genus of $\LL_{3,d}(3[s(3,d,n)],2^{h(3,d,n)})$.
	\begin{pro}\label{pro:grado5}
		The sectional genus of
                $\LL_{3,5}(3[s(3,5,n)],2^{h(3,5,n)})$ is positive, for
                $n\geq 4$. 
		\begin{proof}
A direct computation gives $s(3,5,n)=2$, $h(3,5,n)=10$, $s(2,5,n)=0$, and $h(2,5,n)=4$.
Let $H+S\in \LL^H_{3}(5)$ be the unique divisor containing
$H$. By Remark~\ref{rem:positive_sectional_genus} it is enough to prove that $\LL_{3,5}(3[s(3,5,n)],2^{h(3,5,n)})_{|S}$ has positive sectional genus. 

The surface  
$$S\in\LL_{3,4}(2[2],2^6,1^4)(q[t_1,t_2],p_1,\ldots,p_6,z_1,\ldots,z_4)$$ 
is a quartic surface, the $p_i$'s are general and the $z_i$ are general on $H$.
By \cite[Theorem 4.1]{Me1}, $\LL_{3,4}(2^7)$ is nonspecial and the
general element has $7$
ordinary double points as unique singularities. The $z_i$'s
 are in general position on $H$ therefore the general element in
$$\LL_{3,4}(2^7,1^4)(q,p_1,\ldots,p_6,z_1,\ldots,z_4)$$
 has only $7$ ordinary double points. 
The linear system $\LL_{3,4}(3,2^5)(q,p_1,\ldots,p_5)$ is nonspecial of dimension 4 by Proposition
\ref{pro:3piudoppinonspeciale} and $\LL_{3,3}(2,2^5)$ is empty therefore 
$$\LL_{3,4}(3,2^5,1^5)(q,p_1,\ldots,p_5,p_6,z_1,\ldots,z_4)$$ 
 and henceforth $\LL_{3,4}(3,2^6,1^4)$ are empty.
This shows that for a general choice of $2$ tangent directions 
 the surface 
$$S\in\LL_{3,4}(2[2],2^6,1^4)(q[t_1,t_2],p_1,\ldots,p_6,z_1,\ldots,z_4)$$ has $7$
ordinary double points as unique singularities.
In particular $S$ is a, singular, $K3$ surface and it is not
uniruled. Therefore $\LL_{3,5}(3[s(3,5,n)],2^{h(3,5,n)})_{|S}$ has positive sectional genus. 
\end{proof}
\end{pro}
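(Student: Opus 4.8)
The plan is to exhibit, via the degeneration of Section~\ref{sec:induction}, a quartic surface $S$ on which the restricted system visibly has positive sectional genus, and then to invoke Remark~\ref{rem:positive_sectional_genus}. Concretely, I would first record the numerology for $d=5$: one computes $s(3,5,n)=2$, $h(3,5,n)=10$, and on the hyperplane $s(2,5,n)=0$, $h(2,5,n)=4$. By the induction machinery of Lemma~\ref{lem:firstdegeneration} and Proposition~\ref{pro:induction}, the specialized system $\LL^H_3(5)$ is nonspecial and contains a \emph{unique} divisor through the hyperplane $H=H_2$; write it as $H+S$. Subtracting $H$ lowers by one all multiplicities supported on $H$, so the residual quartic $S$ lies in
$$\LL_{3,4}(2[2],2^6,1^4)(q[t_1,t_2],p_1,\ldots,p_6,z_1,\ldots,z_4),$$
with the $p_i$ general, the $z_i$ general on $H$, and $t_1,t_2$ the two prescribed tangent directions at $q$.

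The heart of the argument is to show that a general such $S$ is a singular $K3$ surface, namely a quartic whose only singularities are seven ordinary double points, at $q$ and the six points $p_i$. For the seven nodes alone this is \cite[Theorem 4.1]{Me1}: $\LL_{3,4}(2^7)$ is nonspecial and its general member has exactly seven ordinary double points. I would then check that the four simple points $z_i$, being general on $H$, impose independent conditions and create no further singularities, and --- this is the delicate point --- that imposing the two general tangent directions $t_1,t_2$ at $q$ keeps $q$ an \emph{ordinary} node rather than degenerating it into a worse singularity. The key is to rule out members that are triple at $q$: by Proposition~\ref{pro:3piudoppinonspeciale} the system $\LL_{3,4}(3,2^5)$ is nonspecial of dimension $4$, and since $\LL_{3,3}(2,2^5)$ is empty the five further simple conditions are independent by Lemma~\ref{indpdti}, so $\LL_{3,4}(3,2^5,1^5)$, and a fortiori $\LL_{3,4}(3,2^6,1^4)$, is empty. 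Hence no member acquires multiplicity $3$ at $q$, and for a general choice of $t_1,t_2$ the tangent cone at $q$ stays a smooth conic; thus $S$ has exactly the seven nodes as singularities.

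Granting this, $S$ is a singular $K3$ surface, so its minimal resolution has Kodaira dimension zero and $S$ is not uniruled. The curves cut on $S$ by the quintics of $\LL_{3,5}(3[s(3,5,n)],2^{h(3,5,n)})$ form a linear system covering $S$; were its general member rational, $S$ would be swept out by a family of rational curves and hence uniruled, a contradiction. Therefore the general element of $\LL_{3,5}(3[s(3,5,n)],2^{h(3,5,n)})_{|S}$ has positive geometric genus, and by Remark~\ref{rem:positive_sectional_genus} the sectional genus of $\LL_{3,5}(3[s(3,5,n)],2^{h(3,5,n)})$ is positive. The main obstacle is precisely the singularity control of the previous paragraph: one must guarantee that neither the tangent-direction constraints at $q$ nor the simple base points on $H$ degrade the seven nodes, since any extra or worse singularity could render the limit surface uniruled --- indeed rational --- and thereby destroy the genus bound.
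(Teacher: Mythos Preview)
Your proposal is correct and follows essentially the same line as the paper's own proof: same numerology, same residual quartic $S\in\LL_{3,4}(2[2],2^6,1^4)$, the same appeal to \cite[Theorem 4.1]{Me1} for the seven ordinary nodes, the same emptiness argument for $\LL_{3,4}(3,2^6,1^4)$ via Proposition~\ref{pro:3piudoppinonspeciale} and $\LL_{3,3}(2,2^5)=\varnothing$, and the same conclusion that $S$ is a non-uniruled (singular $K3$) quartic. You are in fact slightly more explicit than the paper in invoking Lemma~\ref{indpdti} for the simple points and in spelling out why non-uniruledness forces the restricted curves to have positive genus.
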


\begin{pro}\label{pro:grado4} The sectional genus of
  $\LL_{3,4}(3[s(3,4,n)],2^{h(3,4,n)})$ is positive, for $n\geq 4$.
\end{pro}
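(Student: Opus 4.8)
The plan is to pin down the base locus of the system explicitly and then read off the sectional genus by an adjunction computation on a general member. By Proposition~\ref{inductionstep} one has $s(3,4,n)=1$ and $h(3,4,n)=5$, so the system in question is $\LL_{3,4}(3[1],2^5)$, which by Proposition~\ref{pro:induction} is nonspecial of dimension $3$ and therefore defines a dominant map $\f\colon\p^3\dasharrow\p^3$. By Remark~\ref{rem:positive_sectional_genus} it suffices to show that the general curve section, i.e.\ the preimage $\f^{-1}(\ell)$ of a general line, is a curve of positive genus.

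First I would determine the base curves. Each line $\ell_i=\Span{q,p_i}$ is forced into every member, since a quartic of multiplicity $3$ at $q$ and $2$ at $p_i$ meets $\ell_i$ with multiplicity at least $3+2=5>4$. The subtle—and decisive—point is that the unique twisted cubic $\tau$ through the six general points $q,p_1,\dots,p_5$ is also a base curve: a member $D$ meets $\tau$ with multiplicity at least $3+5\cdot 2=13$, while $\deg(D\cap\tau)=4\cdot 3=12$, so necessarily $\tau\subset D$. As $\deg\tau+\sum_i\deg\ell_i=3+5=8$, the moving part of a curve section has degree $16-8=8$.

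Next I would compute the genus of this moving octic by adjunction on a general member. Blowing up $q$ and the nodes $p_1,\dots,p_5$ resolves a general $T\in\LL_{3,4}(3[1],2^5)$ to a smooth rational surface $\tilde T$: the exceptional locus over $q$ meets $\tilde T$ in the tangent cone $\Gamma$, a smooth plane cubic (hence an elliptic curve) for general $T$, while the nodes produce $(-2)$-curves $\gamma_1,\dots,\gamma_5$. Adjunction in the blown-up $\p^3$ gives $K_{\tilde T}=-\Gamma$, so $\tilde T$ is rational with elliptic anticanonical class. The strict transforms $L_i$ of the lines and $L_\tau$ of $\tau$ are $(-1)$-curves meeting $\Gamma$, and since each has intersection $-1$ with the pulled-back system it occurs with multiplicity one in the fixed part; subtracting them yields the moving class $C=4h-3\Gamma-2\sum_i\gamma_i-\sum_iL_i-L_\tau$, where $h$ and $\Gamma=e_0$ are the pullbacks of the hyperplane and of the exceptional divisor over $q$. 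A direct intersection computation on $\tilde T$ then gives
\[ 2g-2=C^2+K_{\tilde T}\cdot C=3+(-3)=0, \]
so $g=1$ and $C\cdot h=8$: the general curve section is a smooth elliptic octic, whence the sectional genus equals $1>0$. The tangent-direction condition $[1]$ only forces passage through one general point of $\Gamma$ and so leaves this genus unchanged.

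The hard part is not the arithmetic but the correct identification of the full base locus: omitting the twisted cubic $\tau$ would (wrongly) give $C^2=2$ and $g=0$, so the positivity of the sectional genus rests entirely on the presence of $\tau$. The remaining bookkeeping—the $(-2)$- and $(-1)$-curves coming from the nodes, the lines and $\tau$, together with the elliptic anticanonical $-K_{\tilde T}=\Gamma$ produced by the triple point—is routine once organized. This is the $d=4$ analogue of the $K3$ restriction used in Proposition~\ref{pro:grado5}, with the elliptic anticanonical of the (rational) resolved quartic playing the role of the non-uniruledness of the quintic's residual $K3$.
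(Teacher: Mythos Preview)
Your approach is genuinely different from the paper's and rests on a nice observation the paper does not make: the unique twisted cubic $\tau$ through $q,p_1,\dots,p_5$ is a base curve of $\LL_{3,4}(3[1],2^5)$, and together with the five lines $\ell_i$ it accounts for the entire one--dimensional base locus. The paper instead passes to the specialized system $\LL_3^H(4)$, peels off the hyperplane to get a residual $4$-nodal cubic surface $S$, and restricts; there the base curves are only three lines and the moving part visibly has a triple and a double point, so positivity of the genus is read off from the singularities rather than from adjunction. Your route avoids any specialization and gives the exact value $g=1$; the paper's route trades this precision for working on a surface ($S$) whose singularities are completely explicit, so no smoothness claims are needed.

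That said, your argument has a real gap at the passage from arithmetic to geometric genus. The computation $2p_a(C)-2=C^2+K_{\tilde T}\!\cdot C=3-3=0$ is correct and robust (it only uses that $\tilde T$ is Gorenstein, which is automatic for a hypersurface), but you then assert that the general $C$ is a \emph{smooth} elliptic octic. For that you need two things you do not justify: that the tangent cone $\Gamma$ is smooth for general $T$---the tangent cones of members of $\LL_{3,4}(3,2^5)$ form only a $3$--dimensional linear system of plane cubics, not all cubics, so this is not automatic---and that the residual moving system on $\tilde T$ has no base point of multiplicity $\ge 2$, so that Bertini gives a smooth $C$. Without these, $p_a(C)=1$ is compatible with $C$ being nodal rational or a pair of conics. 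One clean way to close the gap in your framework: take $D=Q+Q'$ with $Q$ the quadric cone with vertex $q$ through the $p_i$ and $Q'$ a general smooth quadric through the six points; then $Q\cap T=\sum\ell_i+\tau$ exactly, so the moving part is $Q'\cap T$, a complete intersection of type $(2,4)$ with an ordinary triple point at $q$ and nodes at the $p_i$, whose normalization has genus $9-\binom{3}{2}-5=1$. This produces a specific curve section with a genus-one component, which by Remark~\ref{rem:positive_sectional_genus} suffices. (A minor point: Proposition~\ref{pro:induction} does not literally apply to $\LL_{3,4}(3[1],2^5)$, and in particular not when $n=4$; the nonspeciality is easy to check directly.)
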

		\begin{proof}
Our choice of integers is $s(3,4,n)=1$, $h(3,4,n)=5$, $s(2,4,n)=0$,
and $h(2,4,n)=2$. Let $S+H$ be the unique element in
$\LL_3^H(4)$ containing the hyperplane $H$. Then 
$$S\in\LL_{3,3}(2[1],2^3,1^2)(q[t],p_1,p_2,p_3, p_4,p_5)$$ 
is a cubic surface with 4 double
points. 
It is easy to prove, with
reducible elements, that the scheme base
locus  of $\LL_3^H(4)$ is given by the assigned singularities and the lines spanned by $q$ and
$\{p_1,p_2,p_3\}$. Hence the fixed component of $\LL_3^H(4)_{|S}$ is given by the 3 lines spanned by $q$ and $p_1$, $p_2$, and $p_3$. Then the general element in the movable part of
$\LL_3^H(4)_{|S}$ has a triple point in $p_1$ and a double
point in $p_4$ and therefore positive genus. This is enough to conclude by Remark~\ref{rem:positive_sectional_genus}.
\end{proof}
We conclude the section collecting all the results we need.
\begin{pro}\label{pro:genus_bound} The sectional genus of
  $\LL_{3,d}(3[s(3,d,n)],2^{h(3,d,n)})$ is positive for  $n,d\geq4$.\end{pro}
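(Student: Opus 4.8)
The plan is to reduce Proposition~\ref{pro:genus_bound} to the three cases already handled and to the plane base case, proceeding by induction on $d$. For $d=4$ and $d=5$ the statement is exactly Proposition~\ref{pro:grado4} and Proposition~\ref{pro:grado5}, so I would dispose of those immediately. For $d\geq 6$ I would like to leverage Proposition~\ref{pro:degp2}, which already establishes positivity of the sectional genus of the planar system $\LL_{2,d}(3[s(2,d,n)],2^{h(2,d,n)})$ for $d\geq 6$. The natural mechanism is the induction machinery built in Lemma~\ref{lem:firstdegeneration}: by conclusion (4) of that lemma, once the degeneration with no collisions is set up and the hypotheses verified (which Proposition~\ref{pro:induction} guarantees in our range), one has $\LL^H_{i}(d)_{|\Pi}=\LL_{3,d}(3[s(3,d,n)],2^{h(3,d,n)})$ for every $i\geq 3$. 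In particular the restriction of our three-dimensional system to the fixed $\p^3$ flag member $\Pi=H_3$ is literally the system whose genus we want to bound.

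First I would record that the sectional genus is monotone under the specializations we use: by Remark~\ref{rem:positive_sectional_genus}, to prove $\LL_{3,d}(3[s(3,d,n)],2^{h(3,d,n)})$ has positive sectional genus it suffices to exhibit a single curve of positive genus in an algebraic family whose general member is a curve section. The cleanest route is to compare the sectional genus of the space system with that of the plane system it specializes to. Since $\LL_2^H(d)=\LL_{2,d}(3[s(2,d,n)],2^{h(2,d,n)})$ arises (via Definition~\ref{dfn:induction}, with $Z_2$ the flat limit of $Z_{3|H_2}$) as a specialization of the restriction of $\LL_{3,d}(3[s(3,d,n)],2^{h(3,d,n)})$ to the flag hyperplane $H_2\subset\Pi$, a general curve section of the plane system is a flat limit of curve sections of the space system. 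Then Remark~\ref{rem:positive_sectional_genus}, quoting \cite{Ch}, gives that the geometric genus does not drop in the limit: the generic curve of the space family has genus at least that of the specialized plane curve. Hence for $d\geq 6$ positivity of the sectional genus of the plane system from Proposition~\ref{pro:degp2} propagates upward to give positivity for $\LL_{3,d}(3[s(3,d,n)],2^{h(3,d,n)})$.

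The remaining step is purely bookkeeping: I would verify that for $n,d\geq 4$ the triple $(n,d)$ never forces us outside the ranges already covered. The sequences $h(i,d,n)$ and $s(i,d,n)$ for $i<n$ are independent of $n$ by the Remark following Proposition~\ref{inductionstep}, so the system $\LL_{3,d}(3[s(3,d,n)],2^{h(3,d,n)})$ depends only on $d$ once $n\geq 4$. Consequently the cases split cleanly as $d=4$ (Proposition~\ref{pro:grado4}), $d=5$ (Proposition~\ref{pro:grado5}), and $d\geq 6$ (via Proposition~\ref{pro:degp2} and the genus semicontinuity argument above), and there is no interaction with the excluded pair $(n,d)=(4,4)$ since here $n\geq 4$ but the system is three-dimensional. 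Assembling these three cases gives the claim for all $n,d\geq 4$.

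The main obstacle I anticipate is justifying rigorously that the plane system really is obtained as a flat specialization of a curve section of the space system, so that the genus-semicontinuity of Remark~\ref{rem:positive_sectional_genus} applies cleanly; in other words, that restricting to the flag member $H_2$ and taking the flat limit $Z_2$ of $Z_{3|H_2}$ produces an \emph{algebraic family} of curve sections whose general member is a section of $\LL_{3,d}(3[s(3,d,n)],2^{h(3,d,n)})$ and whose special member lies in $\LL_2^H(d)$. The cleaner alternative, which sidesteps this subtlety, is simply to quote Proposition~\ref{pro:grado4} and Proposition~\ref{pro:grado5} for $d=4,5$ and, for $d\geq 6$, to note that a general section of the space system contains (after intersecting with the flag member) a plane section lying in $\LL_{2,d}(3[s(2,d,n)],2^{h(2,d,n)})$, whose positive genus from Proposition~\ref{pro:degp2} forces positive sectional genus upstairs by Remark~\ref{rem:positive_sectional_genus}. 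I would present the proof in this collected form, treating $d=4,5$ by direct citation and $d\geq 6$ by the plane reduction.
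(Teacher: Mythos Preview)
Your proposal is correct and follows essentially the same route as the paper: cite Propositions~\ref{pro:grado4} and~\ref{pro:grado5} for $d=4,5$, and for $d\geq 6$ reduce to the planar bound of Proposition~\ref{pro:degp2} via Remark~\ref{rem:positive_sectional_genus}. The paper phrases the $d\geq 6$ step slightly more directly than your specialization argument---it observes that a general curve section of $\LL_{2,d}(3[s(2,d,n)],2^{h(2,d,n)})$ occurs as an irreducible component of a (special) curve section of $\LL_3^H(d)$, namely the intersection of the unique divisor containing $H_2$ with a general one---which is exactly the ``cleaner alternative'' you outline at the end.
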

\begin{proof} For $d=4,5$ this is the content of
  Propositions~\ref{pro:grado4},~\ref{pro:grado5}. For higher degrees
  observe that by construction  a general curve section of
  $\LL_{2,d}(3[s(2,d,n)],2^{h(2,d,n)})$ is an irreducible component of
  a curve section of
  $\LL_3^H(d)$ and therefore the sectional genus of
  $\LL_{3,d}(3[s(3,d,n)],2^{h(3,d,n)})$ is bounded by the sectional
  genus of $\LL_{2,d}(3[s(2,d,n)],2^{h(2,d,n)})$. Thus we conclude by Proposition~\ref{pro:degp2}.
\end{proof}

\section{Cubics and proof of Theorem~\ref{thm:Cremona-identifiability}}\label{sec:special}

Fix a pair $(n,d)$, with $n\geq d\geq 4$, a linear space $\Pi\cong\p^3$
and a point $q\in\Pi$.
Let $Z_0$ be the 0-dimensional scheme obtained as a limit of
$k(n,d)-1$ double points $n+1$ of which collapse to the point $q$ with
$s(3,d,n)$ tangent directions and $h(3,d,n)$ double points on
$\Pi$. Such a degeneration always exists since $s(3,d,n)\leq 4$.
Let $$\LL_0(n,d):=|\oo_{\p^n}(d)\otimes\I_{Z_0}|$$
be the associated linear system.

\begin{lem}\label{lem:degwithTisok}
If $n\ge d\ge 4$ and $(n,d)\neq (4,4)$, then the linear system $\LL_0(n,d)$ is non
special and its sectional genus is positive.
\end{lem}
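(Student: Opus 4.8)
By Proposition~\ref{collision1} the scheme $Z_0$ is a point of multiplicity $3$ at $q$ carrying $\binom{n+1}{2}$ tangent directions, together with the $k(n,d)-n-2$ remaining double points, arranged so that $h(3,d,n)$ of the double points and $s(3,d,n)$ of the tangent directions lie on $\Pi$. Thus $\LL_0(n,d)$ has the same numerical type, and in particular the same expected dimension $n$, as the system $\LL_n(d)$ of Section~\ref{sec:induction}; the sole difference is that its tangent directions are the special ones produced by the collapse, which by Remark~\ref{nongenerici} fail to be in general position. The plan is to show that this failure is immaterial: the collapse directions still impose independent conditions, so the arguments of Sections~\ref{sec:induction} and~\ref{sec:plane} carry over.

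For the nonspeciality I would repeat the induction of Proposition~\ref{pro:induction} for $\LL_0(n,d)$, degenerating onto a general hyperplane $H\supset\Pi$ and reducing, via the Castelnuovo sequence of Remark~\ref{rem:castelnuovo}, to the threefold and plane cases. The numerical estimates of Section~\ref{sec:induction} count the tangent directions but do not see their position, so they are untouched; the only place where generality of the directions was used is Lemma~\ref{indpdti}, invoked to show that the directions at $q$ impose independent conditions and keep the multiplicity there equal to $3$. For the collapse directions this independence is exactly the content of Lemma~\ref{indip}, as already exploited in the proof of Proposition~\ref{collision1}, and I would substitute it at each step. The base case is the plane system of Proposition~\ref{pro:step1_induction}, where $s(2,d,n)\leq 1$, so a single direction is trivially general and nothing changes. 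With these replacements the induction step of Proposition~\ref{pro:induction_step_tuned} applies verbatim and yields that $\LL_0(n,d)$ is nonspecial. As a consistency check, semicontinuity applied to the collapsing degeneration together with Theorem~\ref{thm:AH} gives $\h^0(\LL_0(n,d))\geq n+1$, matching the expected value.

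For the positive sectional genus I would appeal to Proposition~\ref{pro:genus_bound}, observing that its proof extracts the bound at a level involving only a handful of tangent directions. For $d\geq 6$ the genus is inherited from the plane system $\LL_{2,d}(3[s(2,d,n)],2^{h(2,d,n)})$ of Proposition~\ref{pro:degp2}, which carries $s(2,d,n)\leq 1$ direction, while for $d=4,5$ it comes from the threefold constructions of Propositions~\ref{pro:grado4} and~\ref{pro:grado5}, carrying $s(3,d,n)\leq 2$ directions. In each of these ranges at most two tangent directions occur, and any set of at most two distinct directions is automatically in general position; hence the positive-genus curves produced there survive for the collapse directions. By Remark~\ref{rem:positive_sectional_genus} such a curve lies in an algebraic family whose general member is a curve section of $\LL_0(n,d)$, forcing the sectional genus of $\LL_0(n,d)$ to be positive.

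The heart of the matter, and the only genuinely new point with respect to Sections~\ref{sec:induction} and~\ref{sec:plane}, is the nonspeciality: a priori the $\binom{n+1}{2}$ directions at $q$ are special, so Proposition~\ref{pro:induction} cannot be quoted directly. What makes the argument work is that Lemma~\ref{indip} certifies precisely the independence of conditions needed to force $\mult_q=3$ generically and to run the induction; once this is available the passage from general to collapse directions is harmless. The pair $(n,d)=(4,4)$ is excluded by hypothesis and is treated separately.
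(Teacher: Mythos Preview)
Your plan differs from the paper's proof and, as written, has gaps in both halves.

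\textbf{Nonspeciality.} You propose to rerun the whole flag induction of Section~\ref{sec:induction} with the collapse directions in place of general ones, invoking Lemma~\ref{indip} wherever Lemma~\ref{indpdti} was used. But Lemma~\ref{indip} only says that the full set of $\binom{n+1}{2}$ collapse directions imposes independent conditions on \emph{quadrics} in $\p^{n-1}$; it does not tell you that the particular subset of $s(i,d,n)-s(i-1,d,n)$ directions needed at step $i$ imposes independent conditions on the tangent cones of the kernel system $\LL_{i,d-1}(2[\,\cdot\,],2^{h(i,d,n)-h(i-1,d,n)},1^{h(i-1,d,n)})$, nor does it explain how to distribute the very non-generic directions (cf.\ Remark~\ref{nongenerici}) compatibly through the flag $H_2\subset\cdots\subset H_{n-1}$. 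The paper avoids this entirely: it keeps Proposition~\ref{pro:induction} (valid for \emph{general} directions) as a black box and then argues by contradiction with a monodromy/symmetry step. If some collapse direction $y_{i+1}$ failed to impose a condition, the $S_{n+1}$-action permuting the collapsing points would force the tangent cone of a general divisor to contain all of $T$; since $\LL_{n-1,3}(2^{\binom{n+1}{2}})(T)$ is empty this pushes $\mult_q\ge 4$ on a positive-dimensional family, contradicting Proposition~\ref{pro:induction}~ii). This monodromy idea is the genuine new input here, and it is missing from your sketch.

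\textbf{Sectional genus.} You are right that the actual positive-genus curve is produced at the plane or threefold level where at most two tangent directions enter, and those two can be taken general. What is missing is the link upward: you have to show that such a curve sits in an algebraic family whose general member is a curve section of $\LL_0(n,d)$, i.e.\ that $\Pi$ is still an irreducible component of the intersection of $n-3$ general members containing $\Pi$ and that $\LL_0(n,d)_{|\Pi}=\LL_{3,d}(3[s(3,d,n)],2^{h(3,d,n)})$. In Lemma~\ref{lem:firstdegeneration} this is established for general directions via vi) and (4); for the collapse directions the paper interpolates one direction at a time through the auxiliary systems $\LL(T_i)$, using that $s(3,d,n)>0$ to move directions inside or outside $\Pi$ and preserve both the $\Pi$-component and the restriction. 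Your last paragraph asserts the conclusion of this interpolation without supplying it.

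In short, the paper does not redo Section~\ref{sec:induction}; it leverages the already-proved general-direction results and passes to the collapse directions by a symmetry argument (for nonspeciality) and a one-step-at-a-time specialization (for the genus). Your approach could perhaps be made to work, but it would require substantially more than substituting Lemma~\ref{indip} at each step.
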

\begin{proof}
Thanks to Proposition~\ref{pro:induction} for the nonspeciality we have only to worry about the tangent directions.
Let $T$ be the set of ${n+1\choose 2}$ tangent directions. 
Set $T_i:=\{y_1,\ldots,y_{i}\}\subset T$ be a subset of $i$ tangent
directions. Assume that $\LL_{n,d}(3[i],2^h)$ is nonspecial and
$\LL_{n,d}(3[i+1],2^h)$ is special for a general choice of $h$ double
points. Let $\f$ be the map associated to the linear system $\LL_{n,d}(3[i],2^{h-1})$. 

Since $T$ imposes independent conditions on cubics we may assume that
$y_{j}\not\in\Bs\LL_{n,d}(3[i],2^{h-1})$, for $j>i$. The speciality of
$\LL_{n,d}(3[i+1],2^{h})$ forces  $\f(y_{j})$ to be a vertex of
$\f(\p^n)$, for $j>i$. Hence a general divisor $D\in
\LL_{n,d}(3[i+1],2^{h})$ is singular at $y_{j}$, for $j>i$. By a monodromy argument then a general divisor in $\LL_{n,d}(3[{n+1\choose 2}],2^{h})$ is singular along $T$. Since $\LL_{n-1,3}(2^{{n+1\choose 2}})(T)$ is empty this yields 
$$\LL_{n,d}\left(3\left[{n+1\choose 2}\right],2^{h}\right)\subseteq \LL_{n,d}(4,2^{h})$$
and contradicts Proposition~\ref{pro:induction} ii) for $h\leq k(n,d)-n-2$.

We are left to bound the sectional genus of $\LL_0(n,d)$. 
Let $\tilde{T}_i$ be a set of ${n+1\choose 2}-i$ general tangent
directions. Let 
$$\LL(T_i):=\LL_{n,d}\left(3\left[{n+1\choose
  2}\right],2^{k(n,d)-n-2}\right)(q[T_i\cup\tilde{T}_i],p_1,\ldots,p_{k(n,d)-n-2}).$$
By definition we have $\LL(T_1)=\LL^H_n(d)$. 
Fix $D_1,\ldots,D_{n-3}\in \LL_n^H(d)$ general divisors containing
$\Pi$ and $Y_1,Y_2\in\LL_n^H(d)$ general divisors. Then,
  by Lemma~\ref{lem:firstdegeneration} (2), $\Pi$ is an
  irreducible component of $D_1\cdot\ldots\cdot D_{n-3}$ and, by
   Lemma~\ref{lem:firstdegeneration} (4), an irreducible component of $Y_1\cdot Y_2\cdot
  D_1\cdot\ldots\cdot D_{n-3}$ is a curve section of
  $\LL_{3,d}(3[s(3,d,n)],2^{h(3,d,n)})$. Hence, by Proposition~\ref{pro:genus_bound} and Remark~\ref{rem:positive_sectional_genus}, the claim is true for $i=1$.

 To conclude we increase $i$ recursively.  Fix $D_1,\ldots,D_{n-3}\in \LL(T_{i+1})$ general divisors containing
$\Pi$ and $Y_1,Y_2\in\LL(T_{i+1})$ general divisors.  
By construction $s(3,n,d)>0$ therefore we may assume that $\LL(T_{i+1})$ is a specialization of $\LL(T_i)$ moving a
 tangent direction in $\Pi$. In this degeneration all sections in $\LL(T_i)$
containing $\Pi$ are also sections of $\LL(T_{i+1})$. This shows that  $\Pi$ is an
  irreducible component of $D_1\cdot\ldots\cdot D_{n-3}$.
Next we may consider  $\LL(T_{i+1})$ as a specialization of
$\LL(T_i)$ moving a point outside $\Pi$. Via this degeneration we
prove that $\LL(T_{i+1})_{|\Pi}=\LL(T_i)_{|\Pi}$ and therefore an irreducible component of $Y_1\cdot Y_2\cdot
  D_1\cdot\ldots\cdot D_{n-3}$ is a curve section of
  $\LL_{3,d}(3[s(3,d,n)],2^{h(3,d,n)})$.
Then Proposition~\ref{pro:genus_bound} and Remark~\ref{rem:positive_sectional_genus} allow to conclude.
\end{proof}
	
\subsection{$(d=3)$}
The argument we used for forms of degree $d\geq 4$ does not work for cubics. Linear systems of cubics with a triple point and at least a  double point are always special. This forces us to apply a different strategy to study the degree of the map associated to $\LL_{n,3}(2^k)$. This is inspired by \cite{BO} and \cite{Pos} proof of Alexander--Hirschowitz Theorem.
Note that we are interested in integers $n$ such that 
$$k(n):=k(n,3)=\frac{{n+3\choose 3}}{n+1}$$
is an integer. This is equivalent to say that $n\equiv 0,1 (3)$. 
This property is preserved by codimension 3 linear spaces. This simple observation suggests the following induction procedure.

Assume $k(i)$ is an integer. Let $Z_1\subset\p^i$ be a 0-dimensional scheme of $k(i)-1$ general double points. Fix a general codimension 3 linear space $\Pi\subset\p^i$ and let $Z_0$ be a specialization of $Z_1$ with $k(i-3)-1$ double points on $\Pi$.
Therefore the linear system $\LL_{i,3}(2^{k(i)-1})$ specializes to a linear system $\LL_0$ and we may split $\LL_0$ as a direct sum of 
$$\tilde{\LL}\ {\rm and}\ \LL_{i-3,3}(2^{k(i-3)-1}), $$
where $\tilde{\LL}$ is the linear system of cubics containing $\Pi$ and singular in $i+1=k(i)-k(i-3)$ general points of $\p^i$  and in $k(i-3)$ general points of $\Pi$.
The linear system $\tilde{\LL}$ is known to be nonspecial by \cite[Proposition 5.4]{BO}, see also \cite[subsection 5.2]{Pos}, and $\LL_{i-3,3}(2^{k(i-3)-1})$ is nonspecial by Theorem~\ref{thm:AH}.
Let $g_i$ be the sectional genus of 
$\LL_{i,3}(2^{k(i)-1})$, for $i\equiv 0,1(3)$. Let $D_1, D_2, D_3$ be
three general cubics containing $\Pi$.  Considering the $\p^3$'s spanned by 4 double points, it is easy to check
that $\Pi$ is an irreducible component of $D_1\cdot D_2\cdot D_3$,
hence 
\begin{equation}
  \label{eq:deg3}
 g_i\geq g_{i-3}, 
\end{equation}
for $i\geq 6$. 


\begin{lem}
\label{ssec:3n=6} $g_6>0$. 
\end{lem}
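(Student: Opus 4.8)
The plan is to compute the sectional genus of $\LL_{6,3}(2^{11})$ directly by adjunction on a resolution of the associated rational map. Since $k(6,3)=\binom{9}{3}/7=12$, the system in question is $\LL_{6,3}(2^{11})$, which is nonspecial by Theorem~\ref{thm:AH} and hence has dimension $6$, defining a dominant map $\f_{6,3,11}\colon\p^6\map\p^6$. A general curve section is the preimage of a general line, i.e. the moving part of the intersection of $5$ general members, and I will compute its geometric genus.

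First I would pin down the base locus. A cubic with a double point at $p_i$ and at $p_j$ meets the line $\overline{p_ip_j}$ with multiplicity at least $4$, hence contains it; thus the $\binom{11}{2}=55$ lines $\ell_{ij}$ joining the double points lie in the base locus, while by \cite{COV2} (see also \cite[Corollary 4.5]{Me1}) the general member is smooth away from the $11$ nodes and the $1$-dimensional base locus is exactly these lines, each with multiplicity one. Let $\pi\colon X\to\p^6$ be the blow up at the $11$ points, with $H=\pi^*\oo(1)$ and exceptional divisors $E_i$, so that the strict transforms $\tilde\ell_{ij}$ are pairwise disjoint (two lines through a common $p_i$ have distinct tangent directions and separate on $E_i$) and $L:=3H-2\sum_iE_i$. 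Blowing up the $55$ disjoint lines gives $\rho\colon X'\to X$ with exceptional divisors $G_{ij}$, and the moving system becomes the base-point-free class $L':=\rho^*L-\sum_{ij}G_{ij}$; its general section $C'$ is a smooth irreducible curve whose genus is the sought sectional genus.

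Then I would apply adjunction, $2g-2=(K_{X'}+5L')\cdot(L')^5$. On $X$ one has $H^6=1$, $E_i^6=-1$ and vanishing mixed terms, giving $L^6=3^6-11\cdot2^6=25$ and, with $K_X=-7H+5\sum_iE_i$ so that $K_X+5L=8H-5\sum_iE_i$, the value $(K_X+5L)\cdot L^5=8\cdot3^5-11\cdot5\cdot2^5=184$. Since the lines are disjoint, each contributes independently on $X'$. Using $K_{X'}=\rho^*K_X+4\sum_{ij}G_{ij}$ and $L'=\rho^*L-\sum_{ij}G_{ij}$ one gets $K_{X'}+5L'=\rho^*(K_X+5L)-\sum_{ij}G_{ij}$, and the push-forward formulae $\rho_*(G_{ij}^5)=[\tilde\ell_{ij}]$ and $\rho_*(G_{ij}^6)=\deg c_1(N_{\tilde\ell_{ij}/X})=-5$, together with $L\cdot\tilde\ell_{ij}=3-2\cdot2=-1$ and $(K_X+5L)\cdot\tilde\ell_{ij}=8-5\cdot2=-2$, yield a correction of $+2$ per line. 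Hence $2g-2=184+55\cdot2=294$, so that $g_6=148>0$.

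The conceptual point is thus easy; the technical heart — and the only real obstacle — is the second step: justifying that the $1$-dimensional base locus is precisely the $55$ lines with multiplicity one, and that, after blowing up the nodes and these lines, the general section $C'$ is smooth and irreducible, so that adjunction applies verbatim. For this I would lean on the description of the singularities of $\LL_{n,3}(2^h)$ in \cite{COV2}. The remaining Segre-class bookkeeping on $X'$ is routine once the lines are known to be disjoint, and the resulting value $g_6=148$ makes the positivity manifest; indeed it survives with ample room, so that any harmless excess base component would not affect the conclusion, and by Remark~\ref{rem:positive_sectional_genus} this already forces $g_6>0$.
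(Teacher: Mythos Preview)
Your approach---a direct adjunction computation on a resolution---is entirely different from the paper's. The paper specializes eight of the eleven double points onto a hyperplane $H$ and three off it; the sublinear system of members containing $H$ is then $H+\Lambda$ with $\Lambda$ a pencil of rank-$4$ quadrics with vertex $\langle z_1,z_2,z_3\rangle$, whose base locus $M=\Bs\Lambda$ is a cone over a normal elliptic quartic. Since $M$ is not rationally connected, the sectional genus of the specialized system is positive, and Remark~\ref{rem:positive_sectional_genus} finishes. No explicit genus value is computed or needed, and no description of the full base locus of $\LL_{6,3}(2^{11})$ is required.

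Your intersection-theoretic bookkeeping is correct, but there is a genuine gap precisely at the step you yourself flag as ``the only real obstacle'': the assertion that the $1$-dimensional base locus is exactly the $55$ lines. The references you invoke do not give this. Both \cite{COV2} and \cite[Corollary~4.5]{Me1} describe the \emph{singularities} of a general member (only the eleven imposed nodes), not the base locus; they justify that the lines occur with multiplicity one, but say nothing about whether further curves lie in $\Bs|L|$. This is exactly the difficulty the paper's introduction isolates for $d\le n$, and the whole degeneration machinery is built to avoid it. Concretely: through any nine of the eleven points there passes a unique rational normal sextic $\Gamma$, and on $X$ one has $L\cdot\tilde\Gamma=18-18=0$, while $\tilde\Gamma$ is disjoint from every $\tilde\ell_{ij}$ (tangent directions of a rational normal curve are never secant). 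These $\binom{11}{9}=55$ curves are therefore candidate base components of $|L'|$ which you have not excluded. If any are in the base locus, $C'$ acquires fixed components and adjunction no longer computes the geometric genus of the moving part; and since $(K_{X'}+5L')\cdot\tilde\Gamma=3$ and $c_1(N_{\tilde\Gamma/X'})=-5$, each such curve would contribute a correction of $-8$, so the ``ample room'' remark at the end is not a substitute for the missing argument.
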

\begin{proof}
The number $k(6)$ is 12.   Let
$Z_0=\{p_1,\ldots,p_8,z_1,z_2,z_3\}\subset\p^6$ be a specialization
with the $p_i$'s on a hyperplane $H$ and $z_j$ general. Then
$\LL_{6,3}(2^{11})$ specializes to a linear system
$\LL_{6,3}^H=\LL+\LL_{5,3}(2^8,1^3)$, with $\dim\LL_{6,3}^H=6$. It is
easy to see that $\dim\LL=1$ and  $\LL=H+\Lambda$ with $\Lambda$ a
pencil of quadrics of rank $4$ with vertex $\Span{z_1,z_2,z_3}$. Then
$M:=\Bs\Lambda$ is a cone over a normal elliptic curve in $\p^3$. In
particular $M$ is not rationally connected and therefore $\LL_{6,3}^H$ has positive sectional genus. Then, by Remark~\ref{rem:positive_sectional_genus}, we conclude that $g_6>0$. 
\end{proof}
\begin{lem}
  \label{ssec:3n=7} $g_7>0$.
\end{lem}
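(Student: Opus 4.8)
The plan is to reproduce, one dimension higher, the argument of Lemma~\ref{ssec:3n=6}, replacing the cone over an elliptic curve by a cone over a curve of genus $5$. Recall $k(7)=\binom{10}{3}/8=15$, so we work with $\LL_{7,3}(2^{14})$, and by Remark~\ref{rem:positive_sectional_genus} it suffices to exhibit a positive genus curve in a family whose general member is a curve section of a specialization of $\LL_{7,3}(2^{14})$. Fix a hyperplane $H\cong\p^6$ and let $Z_0$ be a specialization of $14$ general double points in which eleven of them, $p_1,\dots,p_{11}$, lie on $H$, while the remaining three double points are supported at general points $z_1,z_2,z_3\notin H$. Writing $\LL^H_{7,3}$ for the specialized system, the Castelnuovo exact sequence of Remark~\ref{rem:castelnuovo} identifies the cubics of $\LL^H_{7,3}$ containing $H$ with $H+\LL_{7,2}(2^3,1^{11})$, the residual quadrics being singular at $z_1,z_2,z_3$ and simple at $p_1,\dots,p_{11}$.

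Next I analyze $\LL_{7,2}(2^3,1^{11})$. A quadric singular at the three general points $z_1,z_2,z_3$ is singular along the plane $\Sigma:=\Span{z_1,z_2,z_3}\cong\p^2$, hence is pulled back under the projection $\p^7\dasharrow\p^4$ from $\Sigma$. Thus $\LL_{7,2}(2^3,1^{11})$ is identified with the system of quadrics of $\p^4$ through the eleven images of the $p_i$, which has dimension $\binom{6}{2}-1-11=3$. Choosing three general members $Q_1,Q_2,Q_3$, with images $\bar Q_1,\bar Q_2,\bar Q_3\subset\p^4$, their complete intersection $B:=\bar Q_1\cap\bar Q_2\cap\bar Q_3$ is a curve of degree $8$; by adjunction $\omega_B=\oo_B(2+2+2-5)=\oo_B(1)$, so $2g(B)-2=8$ and $g(B)=5$. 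Pulling back, $M:=Q_1\cap Q_2\cap Q_3\subset\p^7$ is the cone with vertex $\Sigma$ over $B$, an irreducible fourfold not contained in $H$.

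Finally, the divisors $H+Q_i\in\LL^H_{7,3}$ are genuine members of the specialized system, so for general $D_4,D_5,D_6\in\LL^H_{7,3}$ the intersection $(H+Q_1)\cdot(H+Q_2)\cdot(H+Q_3)\cdot D_4\cdot D_5\cdot D_6$ is a (degenerate) curve section of $\LL^H_{7,3}$. Its component off $H$ is $\Gamma:=M\cap D_4\cap D_5\cap D_6$, a curve since $\dim M=4$. The projection from $\Sigma$ maps $\Gamma$ dominantly onto $B$, whence $g(\Gamma)\ge g(B)=5$ by Riemann--Hurwitz, and Remark~\ref{rem:positive_sectional_genus} forces $g_7\ge 5>0$. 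I expect the main obstacle to be the geometric control of the web $\LL_{7,2}(2^3,1^{11})$: one must check that the images of $p_1,\dots,p_{11}$ are general in $\p^4$, that the base locus of the web is zero-dimensional so that three general members meet in a genuine complete intersection curve, and that this curve, which passes through the eleven imposed points, is irreducible of positive geometric genus. A Bertini argument together with the generality of the configuration should take care of this.
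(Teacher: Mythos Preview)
Your approach is correct in outline but takes a genuinely different route from the paper. Both begin with the same specialization---eleven double points on a hyperplane $H$, three off it---and identify the residual pencil $\LL_{7,2}(2^3,1^{11})=\Lambda$ with a web of quadrics in $\p^4$ through eleven general points via projection from $\Sigma=\langle z_1,z_2,z_3\rangle$. You then intersect only \emph{three} general quadrics, obtaining the cone $M$ over a complete-intersection curve $B\subset\p^4$ of arithmetic genus $5$, and cut by three further cubics to get a curve $\Gamma$ dominating $B$. The paper instead intersects all \emph{four} quadrics, so that $\Bs\Lambda$ is the union of sixteen $\p^3$'s $\Pi_i=\langle\Sigma,p_i\rangle$, and cuts with only two general cubics $D_1,D_2$; on each $\Pi_i$ these restrict to general members of $\LL_{3,3}(2^4)$ and carve out a twisted cubic through $z_1,z_2,z_3$, so the limit $1$-cycle is a configuration of rational curves meeting in three common points, hence of positive arithmetic genus.

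Your version is conceptually cleaner---one irreducible high-genus curve underneath rather than a reducible configuration---but the step ``$\Gamma$ dominates $B$'' is not free and is the real gap you flag. Concretely, you must check that for a general $b\in B$ the fiber $F_b\cong\p^3$ meets $D_4\cap D_5\cap D_6$ \emph{outside} $\Sigma$: every $D_j|_{F_b}$ lies in $\LL_{3,3}(2^3)$, and all such cubics contain the triangle in $\Sigma$, so one needs that the image of the restriction $\LL^H_{7,3}\to\LL_{3,3}(2^3)$ defines a generically finite map on $F_b$. This is true (e.g.\ reducible cubics $L_1L_2L_3$ with $L_i\supset\ell_{jk}$ already separate points), but it needs to be said. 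The paper avoids this by working on the $\Pi_i$, where a dimension count forces the restriction to surject onto $\LL_{3,3}(2^4)$ and the standard Cremona geometry does the rest. What you gain is a bound $g_7\ge 5$ rather than merely $g_7>0$; what the paper gains is that no irreducibility or domination argument is needed.
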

\begin{proof}
The number $k(7)$ is 15. Let
$Z_0=\{p_1,\ldots,p_{11},z_1,z_2,z_3\}\subset\p^7$ be a specialization
with the $p_i$'s on a hyperplane $H$ and $z_j$ general. Then
$\LL_{7,3}(2^{14})$ specializes to a linear system
$\LL_{7,3}^H=\LL+\LL_{6,3}(2^{11},1^3)$, with $\dim\LL_{7,3}^H=7$. It
is easy to see that $\dim\LL=3$ and  $\LL=H+\Lambda$ with $\Lambda$ a
linear system  of quadrics of rank $5$.  Then $M:=\Bs\Lambda$ is a union
of $16$ $\p^3$ meeting in $\Span{z_1,z_2,z_3}$.  
Let $\Pi_i=\Span{z_1,z_2,z_3,p_i}$, then
$\Pi_i\cap\Pi_j=\Span{z_1,z_2,z_3}$ and $\Pi_i\subset M$. By construction we have $\LL_{7,3|\Pi_i}^H\subset\LL_{3,3}(2^4)$ on the other hand specialization can only increase dimension of linear systems, therefore
$$\dim(\LL_{7,3}^H)_{|\Pi_i}\geq \dim\LL_{7,3}(2^{14})_{|\Pi_i}=4,$$
where the last equality is proved in \cite[subsection 5.4]{Pos}.
 Let $D_1,D_2\in\LL_{7,3}^H$ be two general
elements. Then $(D_1\cdot D_2)_{|\Pi_i}$ contains a twisted normal curve
passing through $\{z_1,z_2,z_3,p_i\}$. Set $Q_1,\ldots,Q_4\in\Lambda$ general
elements. Then the 1-cycle $Q_1\cap\ldots\cap Q_4\cap D_1\cap D_2$
contains rational curves intersecting in $\{z_1,z_2,z_3\}$ and it has
positive genus.
 This, by Remark~\ref{rem:positive_sectional_genus}, shows that $g_7>0$.
\end{proof}
We collected all needed result to prove Theorem~\ref{thm:Cremona-identifiability}.
\begin{proof}[Proof of Theorem~\ref{thm:Cremona-identifiability}] By
  \cite[Theorem 4.3, Proposition 2.4]{Me1} and \cite[Theorem 3.2]{AC} we may assume $d\leq n$ and
  $n\geq 4$. If  $d=2$ and $\dim\LL_{n,2}(2^h)=n$ then the map
  associated to  $\LL_{n,d}(2^h)$ is always of fiber type. 

If $d=3$, $n\equiv0(3)$, and $n\geq 6$ then Theorem~\ref{thm:AH} forces $h=k(n)-1$. Then, by Equation~(\ref{eq:deg3}) and
Lemma~\ref{ssec:3n=6}, the sectional genus of $\LL_{n,3}(2^{h})$ is positive. 
If $d=3$, $n\geq 7$ and $n\equiv1(3)$  we conclude as before via Equation~(\ref{eq:deg3}) and
Lemma~\ref{ssec:3n=7} that the sectional genus is positive. 
It is easy and well known that $\LL_{4,3}(2^6)$
induces a fiber type map that contracts the rational normal curves through the 6 points.
This analysis proves the theorem for $d\leq 3$.


Assume that $n\geq d\geq 4$. By
Theorem~\ref{thm:AH}, $h=k(n,d)-1$. If $n=d=4$, then $h=13$. There is a pencil of quadrics in $\p^4$ through 13 general points, so $\LL_{4,4}(2^{13})$ admits a linear subsystem of reducible divisors with base locus in codimension 2, hence the associated map cannot be birational. Suppose then $(n,d)\neq (4,4)$. By Lemma~\ref{lem:degwithTisok}, $\LL_0(n,d)$ is a specialization of $\LL_{n,d}(2^h)$  and it has positive sectional genus. This shows that $\LL_{n,d}(2^h)$ does not define a Cremona modification.
\end{proof}

\end{document}